\pgfplotsset{compat=newest}
\theoremstyle{definition}
\newtheorem{theorem}{Theorem}[section]
\newtheorem{lemma}[theorem]{Lemma}
\newtheorem{corollary}[theorem]{Corollary}
\newtheorem{proposition}[theorem]{Proposition}
\newtheorem{definition}[theorem]{Definition}
\newtheorem{example}[theorem]{Example}
\newtheorem{notation}[theorem]{Notation}
\newtheorem{remark}[theorem]{Remark}
\numberwithin{equation}{section}
\numberwithin{figure}{section}
\numberwithin{equation}{section}
\newcommand{\C}{\mathbb{C}}
\newcommand{\Q}{\mathbb{Q}}
\newcommand{\R}{\mathbb{R}}
\newcommand{\Z}{\mathbb{Z}}
\newcommand{\al}{\alpha}
\newcommand{\mc}{\mathcal }
\newcommand{\tr}{\mathrm{tr}}
\newcommand{\bu}{\bullet}
\newcommand{\om}{\omega}
\newcommand{\Del}{{\bf \Delta}}
\newcommand{\Cech}{{\v{C}ech} }  
\newcommand{\Tot}{{\bf Tot}}
\newcommand{\tot}{{\bf tot}}
\newcommand{\Ch}{{\bf Ch}}
\newcommand{\ChL}{{\bf Ch^{\scalebox{0.5}{$\searrow$}}}}
\newcommand{\ChLhol}{{{\bf Ch}_{hol}^{\scalebox{0.5}{$\searrow$}}}}
\newcommand{\BCh}{{{\bf Ch}_{\Delta}}}
\newcommand{\ch}{{\textrm{Ch}}}
  \newcommand{\ul}{[u]^{\bu\leq 0}}
\newcommand{\sAb}{\mc{A}b^{\Del^{op}}}
\newcommand{\sSet}{\mc{S}et^{\Del^{op}}}
\newcommand{\Chain}{\mc {C}h}
\newcommand{\Set}{\mc{S}et}
\newcommand{\OM}{{\bf \Omega}}
\newcommand{\CMan}{\mathbb{C}\mc{M}an}
\newcommand{\Man}{\mc{M}an}
\newcommand{\Ob}{{\bf Obj}}
\newcommand{\Mor}{{\bf Mor}}
\newcommand{\sgn}{\mathrm{sgn}}
\newcommand{\Omhol}[1]{\Om_{0,hol}^{#1}} \newcommand{\Ohol}{\Om_{0,hol}}
\newcommand{\OMhol}{{\bf \Omega}}
\newcommand{\Omdelhol}[1]{\Om_{\partial,hol}^{#1}} \newcommand{\Odelhol}{\Om_{\partial,hol}}
\newcommand{\OMdelhol}{{\bf \Omega_{\textrm{\bf $\partial$,hol}}}}
\newcommand{\Nerve}{{\mathcal N}}
\newcommand{\DK}{DK}
\newcommand{\DKSet}{\underline{DK}}
\newcommand{\Om}{\Omega}
\newcommand{\quot}{Q}
\newcommand{\CN}{\mathrm{\check{N}}}   \newcommand{\CechNerve}{\mathrm{\check{N}}}
\newcommand{\NU}{\mathrm{\check{N}}\mc U}
\newcommand{\vC}{{\check{C}}}   
\newcommand{\Delt}[1]{\Delta^{#1}}
\newcommand{\bbb}{\mathfrak{b}}  
\newcommand{\vb}{{\mathcal{VB}}}
\newcommand{\vbn}{{\mathcal{VB}^\nabla}}
\newcommand{\fvbn}{{\mathcal{FVB}^\nabla}}
\newcommand{\pvbn}{{\mathcal{PVB}^\nabla}}
\newcommand{\pbn}{{\mathcal{PVB}^d}}   
\newcommand{\HVBnabla}{{\mathcal{HVB}^\nabla}}
\newcommand{\VB}{{\bf{VB}}^{\nabla}}
\newcommand{\BVB}{{\bf{VB}}_{\Delta}}
\newcommand{\FVB}{{\bf{FVB}}^{\nabla}}
\newcommand{\PVB}{{\bf{PVB}}^{\nabla}}
\newcommand{\BUN}{{\bf{VB}}}
\newcommand{\HVB}{{{\bf HVB}^\nabla}}
\newcommand{\Top}{\mc{T}op}
\newcommand{\OMdR}{{\bf \Omega_{\textrm{\bf dR}}}}
 \newcommand{\OdR}{\Om_{\textrm{dR}}}
\newcommand{\Chu}{\textrm{Ch}_u}
\newcommand{\nab}[1]{\nabla^{\{#1\}}}
\newcommand{\nabladelta}{{\nabla_{\hspace{-1mm}\Delta}}}
\newcommand{\delbar}{\bar{\partial}}
\newcommand{\del}{\partial}
\newcommand{\CechSh}[1]{{#1}^{\check{\dagger}}} 
\newcommand\restr[2]{{
  \left.\kern-\nulldelimiterspace 
  #1 
  \vphantom{\big|} 
  \right|_{#2} 
  }}
\DeclareMathOperator*{\colim}{colim}
\author[C. Glass]{Cheyne Glass}
\address{Cheyne Glass, State University of New York at New Paltz, Department of Mathematics, 1 Hawk Dr., New Paltz, NY 12561}
  \email{glassc@newpaltz.edu}
\author[T. Tradler]{Thomas Tradler}
  \address{Thomas Tradler, New York City College of Technology The City University of New York, Department of Mathematics, 300 Jay Street, Brooklyn, NY 11201}
  \email{ttradler@citytech.cuny.edu}
\author[M. Zeinalian]{Mahmoud Zeinalian}
  \address{Mahmoud Zeinalian, Lehman College, The City University of New York, Department of Mathematics, 250 Bedford Park Blvd W, Bronx, NY 10468}
  \email{mahmoud.zeinalian@lehman.cuny.edu}
\title{Cech - de Rham Chern character on the stack of holomorphic vector bundles}
\keywords{Simplicial presheaf, Chern character, Chern-Simons, cosimplicial simplicial set, totalization, \Cech complex}
\subjclass[2020]{19L10, 18F20 (primary), and 58J28, 55N05, 18N50 (secondary)} 
\begin{document}
\maketitle
\begin{abstract}
We provide a formula for the Chern character of a holomorphic vector bundle in the hyper-cohomology of the de Rham complex of holomorphic sheaves on a complex manifold. This Chern character can be thought of as a completion of the Chern character in Hodge cohomology obtained as the trace of the exponential of the Atiyah class, which is \v{C}ech closed, to one that is \v{C}ech-Del closed. Such a completion is a key step toward lifting O'Brian-Toledo-Tong invariants of coherent sheaves from Hodge cohomology to de Rham cohomology. An alternate approach toward the same end goal, instead using simplicial differential forms and Green complexes, can be found in Hosgood's works \cite{Ho1,Ho2}. In the algebraic setting, and more generally for K\"{a}hler manifolds, where Hodge and de Rham cohomologies agree, such extensions are not necessary, whereas in the non-K\"{a}hler, or equivariant settings the two theories differ. We provide our formulae as a map of simplicial presheaves, which readily extend the results to the equivariant setting and beyond. This paper can be viewed as a sequel to  \cite{GMTZ1} which covered such a discussion in Hodge cohomology. As an aside, we give a conceptual understanding of how formulas obtained by Bott and Tu for Chern classes using transition functions and those from Chern-Weil theory using connections, are part of a natural unifying story.
\end{abstract}
\setcounter{tocdepth}{1}
\tableofcontents

\section{Introduction}\label{SEC:Intro}

Complex vector bundles have characteristic classes which are cohomology classes in the base of the bundle. In the smooth setting, one can give cocycle level de Rham representatives of these classes by fixing a connection and applying various invariant polynomials to the curvature. That the cohomology class of this representative is well-defined is witnessed by the Chern-Simons forms for a path of connections. In fact, there is a hierarchy of Chern-Simons forms interpolating choices in each step. One natural question is whether these invariants can be constructed combinatorially using the data of local trivializations and transition functions. Such formulae have been given by Atiyah in the holomorphic case through what we now call the Atiyah class. Atiyah's procedure extracts forms that naturally land in the Hodge cohomology of the base. For a K\"ahler base, the Hodge cohomology agrees with the de Rham cohomology and these invariants agree with the de Rham invariants. For a general complex manifold, the Hodge and de Rham cohomologies are related through a spectral sequence and the usual Chern character, as the trace of the exponential of the Atiyah class, lies in the first page of this spectral sequence identified as Hodge cohomology. It is therefore natural to ask if this Chern character is a shadow of a naturally assigned class in de Rham cohomology identified as the infinity-page of the above spectral sequence. In this paper, we provide explicit formulae for such a class in terms of the transition functions of holomorphic vector bundles. Our motivation for this work was to complete O'Brian-Toledo-Tong's Chern character for coherent sheaves, which landed in Hodge cohomology, to a class in de Rham cohomology. While our attempt is still a work in progress, it has become clear that even in the case of holomorphic vector bundles presented in this paper the task is novel and sufficiently complicated. Furthermore, our general framework of simplicial presheaves naturally extends the above discussions to equivariant (Hodge and de Rham) settings and beyond.  

In slightly more detail, for a smooth manifold $M$, the Chern character associates to a complex vector bundle $\mc E$ over $M$ a cohomology class $[\ch(\mc E)]\in H^\bu(M)$. There are also similar higher Chern-Simons classes that can be associated to isomorphisms of complex vector bundles, and more generally to elements in the nerve $\Nerve(\vb(M))$ of the category $\vb(M)$ of complex vector bundles over $M$ whose morphisms consist of isomorphisms of complex vector bundles. To give an explicit representative of these classes, one needs to make certain choices, such as, for example following Chern-Weil theory, the choice of a connection $\nabla$ on $\mc E$, with which we can obtain a de Rham representative of the Chern character via $\ch(\mc E)=\tr\big(e^{\frac{R}{2\pi i}}\big)\in \OdR^\bu(M)$, where $R$ denotes the curvature of $\nabla$. If we furthermore want to encode higher Chern-Simons-type information, we may consider the category $\vbn(M)$ of complex vector bundles with connections whose morphisms consist of isomorphisms that do not need to respect the connection in any way. Taking the nerve of this category, denoted by $\VB(M)=\Nerve(\vbn(M))$, we can indeed find a Chern character map $\Ch(M):\VB(M)\to \OMdR(M)$, which encodes the Chern character and higher Chern-Simons type forms via explicit representatives in a natural way, (i.e., as a natural transformation of simplicial presheaves). Here, $\OMdR(M)$ is essentially the de Rham forms on $M$ arranged in a convenient way (see definition \ref{DEF:OMdR}).

We describe this map by considering a somewhat modified simplicial presheaf $\BVB(M)$, given by considering complex vector bundles with connections over $M\times |\Delta^n|$, where $|\Delta^n|$ denotes the standard $n$-simplex (see definition \ref{DEF:VB_Delta}). Using this as our basic example, we define a straightforward Chern character map $\BCh(M):\BVB(M)\to \OMdR(M)$ given by taking the trace of the exponentiated curvature (as usual) and integrating out the fiber $|\Delta^n|$ (see definition \ref{DEF:basic-CS-map}, \eqref{EQU:CH(M)_n(E)(e)}, and proposition \ref{PROP:Ch-presheaf-map}). This map acts as a guide for all Chern character constructions in this paper. Indeed, the Chern character map $\Ch(M):\VB(M)\to \OMdR(M)$ mentioned above is given by simply taking an element of the nerve $\VB(M)=\Nerve(\vbn(M))$, interpreting it as an element of $\BVB(M)$ (as described in definition \ref{DEF:E-to-EDelta}), and applying $\BCh(M):\BVB(M)\to \OMdR(M)$. It is worth noting, that the assignment going from $\VB(M)$ to $\BVB(M)$ via definition \ref{DEF:E-to-EDelta} does not produce a map simplicial presheaves, as it does not respect the simplicial structures, but it does become a map of simplicial presheaves after composing it with $\BCh(M)$ (due to the application of the trace, which removes any discrepancies in the assignment from definition \ref{DEF:E-to-EDelta}).
\begin{equation*}
\xymatrix{ \VB(M) \ar@{.>}[r] \ar_{\Ch(M)}@/_1pc/[rr]
& \BVB(M) \ar^{\BCh(M)}[r]  
& \OMdR(M) }
\end{equation*}

Using the fact the we have a map of simplicial presheaves, we can give yet another way to encode the Chern character, other than making a choice of connection as in Chern-Weil theory, but rather by a choice of local trivializations and transitions functions. More precisely, we can use the fact that every complex vector bundle $\mc E\to M$ can be decomposed into locally trivializable constituents $\mc E_i \xrightarrow{s_i} U_i\times \C^n\to U_i$ where $\{ U_i\}_i$ is a suitable open cover of $M$. Thus, it is natural to consider the category $\pvbn(M)$ of trivial complex vector bundles  $\C^n \times M \to M$ with \emph{flat} connections $d_{DR}$, and morphisms given by isomorphisms in $\Gamma(M, GL_n(\mathbb{C}))$. The nerve of this category is denoted by $\PVB(M)=\Nerve(\pvbn(M))$. Since there is a natural inclusion $\PVB(M)\hookrightarrow \VB(M)$ of trivial vector bundles into all vector bundles over $M$, we also obtain an induced Chern character map, which, by slight abuse of notation, is also denoted by
\[
\Ch(M):\PVB(M)\hookrightarrow \VB(M)\stackrel{\Ch(M)}\longrightarrow \OMdR(M).
\]
The invariants of $\mc E$ can then be recovered by taking the totalization of the Chern character on a chosen cover $\{U_i\}_i$.

In section \ref{SEC:CW-vs-BT} we show how the two approaches for the Chern character via Chern-Weil theory or via a formula given by Bott and Tu are coming from specific choices of global or local connections on our complex vector bundle (see examples \ref{EXA:Chern-Weil-example} and \ref{EXA:Bott-Tu-example}). Moreover, from our general setup, we also obtain an immediate identification of the two in the appropriate cohomology theory (see corollary \ref{COR:CW-id-BT}).

In section \ref{SEC:holomorphic-Chern-extended}, we study the analogous situation for holomorphic vector bundles. This compares to a similar Chern character map that we studied in \cite{GMTZ1}, where we gave a map of simplicial presheaves encoding the trace of the exponentiated Atiyah class. A major difference in our current setting is that the output of our construction lands in a space which explicitly is using the $\del$-operator of holomorphic forms, whereas in \cite{GMTZ1} we explicitly had taken a zero differential (see definition \ref{DEF:Omega-hol}). This has the effect that the Chern character map from \cite{GMTZ1} is just a first level approximation of the Chern character given here; or, in other words, ``turning on the $\del$'' requires us to provide higher terms that will correct for the fact that some terms in the Chern character map from \cite{GMTZ1} are not $\del$-closed. These features were also observed by Hosgood \cite{Ho1,Ho2} and we refer the reader there for some explicit derivations of these higher terms. The construction given in section \ref{SEC:basic-example} via the basic example almost provides such an extension, with the minor caveat that the construction from section \ref{SEC:basic-example} only gives a symmetrized version of the map we want (see example \ref{EXA:lowest-Chern-symmetrized}). However, it is possible to make a slight (but technical) modification to the construction from section \ref{SEC:basic-example} and obtain a Chern character map $\ChL: \VB \to \OMdR$ with the expected lowest terms reminiscent to \cite{GMTZ1} (see proposition \ref{EQU:CHL-lowest-ul}):
\begin{equation*}
\ChL(M)_n(\mc E)(e_{i_0,\dots, i_\ell})=\frac{u^\ell\cdot \sgn(\ell,\dots,1)}{\ell!(2\pi i)^\ell} \cdot \tr\big(\tilde{\theta}_{\ell}\dots \tilde{\theta}_{1}\big)+\sum_{p\geq \ell+1}u^{p}\cdot (\dots)
\end{equation*}
This map induces a Chern character on holomorphic vector bundles $\ChLhol:\HVB\to \OMdelhol$  (see proposition \ref{PROP:Ch->-factors-to-hol}), providing a completion of the Hodge Chern character map from \cite{GMTZ1} to a de Rham Chern character. We note, that after totalizing, this is akin to getting an output for a holomorphic vector bundle in the cohomology $H^\bu(\vC^\bu(\mc U,\Omhol \bu),\delta)$ which is Hodge cohomology, versus having it in $H^\bu(\vC^\bu(\mc U,\Odelhol^\bu),\delta+\del)$ which is de Rham cohomology (see remark \ref{REM:smooth-del-or-no-del}).

\section{A basic example}\label{SEC:basic-example}

In this section we define a basic example of a simplicial presheaf and its Chern character map, which will be a guide for us in the following sections. The main idea here, similar to Hosgood's approach \cite{Ho1,Ho2}, is to adapt the machinery of Chern-Weil theory to a greater generality. We begin by reviewing some notation.

\begin{notation}\label{NOTATION:simplicial-sets}
Denote by $\Set$ the category of (possibly large) sets, by $\Man$ the category of closed, smooth manifolds, by $\Del$ the simplicial category (see  \cite[Appendix A]{GMTZ1}), and denote their opposite categories by $\Set^{op}$, $\Man^{op}$, and $\Del^{op}$, respectively. A simplicial set is a functor $\Del^{op}\to \Set$, and we denote the category of all simplicial sets by $\sSet$. For example, $\Delta^n:\Del^{op}\to \Set$, $(\Delta^n)_k:=\Del([k],[n])$ is the simplicial set of the standard $n$-simplex. We refer the reader to \cite[Appendix A]{GMTZ1} for some of the notation.

Denote by $|\Delta^n|$ the geometric realization of $\Delta^n$, which is a topological space. $|\Delta^n|$ can be identified with the standard topological $n$-simplex
\begin{equation}\label{EQU:standard-n-simplex}
|\Delta^n| = \{(t_1,\dots,t_n)\in \R^n\,|\,0\leq t_1\leq \dots \leq t_n\leq 1\}.
\end{equation}
The $|\Delta^n|$s may be assembled to a cosimplicial topological space, i.e., to a functor $|\Delta^\bu|:\Del\to \Top$, where $\Top$ denotes the category of topological spaces, by mapping $[n]\mapsto |\Delta^n|$. The face maps are given by $\delta_j:|\Delta^{n-1}|\to |\Delta^{n}|$, $\delta_j(t_1,\dots,t_j, \dots, t_{n-1})=(t_1,\dots,t_j, t_j, \dots, t_{n-1})$ for $j=1,\dots, n-1$, while $\delta_0(t_1, \dots, t_{n-1})=(0,t_1,\dots, t_{n-1})$ and $\delta_n(t_1, \dots, t_{n-1})=(t_1,\dots, t_{n-1},1)$. The degeneracy maps are given by $\sigma_j:|\Delta^{n}|\to |\Delta^{n-1}|$, $\sigma_j(t_1,\dots, t_n)=(t_1,\dots,\widehat{t_{j+1}},\dots t_n)$ for $j=0,\dots, n-1$. 

If $\{i_0,\dots, i_\ell\}\sqcup \{j_1,\dots, j_{n-\ell}\}= \{0,\dots, n\}$ is a partition of $\{0,\dots, n\}$ with $0\leq i_0< \dots < i_\ell\leq n$ and $0\leq j_1< \dots< j_{n-\ell}\leq n$, then we denote by 
\begin{equation}\label{DEF:|e_i0...in|}
|e_{i_0,\dots, i_\ell}|=\delta_{j_{n-\ell}}\circ\dots\circ \delta_{j_1}(|\Delta^\ell|)\subseteq |\Delta^n|
\end{equation}
the $\ell$-subcell of $|\Delta^n|$.

In particular, the $i$th vertex $|e_i|\subseteq |\Delta^n|$, where $0\leq i\leq n$, is the point with the coordinates
\begin{equation}\label{EQU:ith-vertex}
 t_1=\dots=t_i=0\quad\text{ and }\quad t_{i+1}=\dots= t_n=1.
\end{equation}
\end{notation}

\begin{definition}\label{DEF:VB_Delta}
We define a functor $\BVB:\Man^{op}\to \sSet$ as follows. For a smooth manifold $M\in \Ob(\Man)$, define $\BVB(M)$ to be the simplicial set, whose $n$-simplicies $\BVB(M)_n$ consist of the (large) set of complex vector bundles with connection over $M\times |\Delta^n|$,
\begin{multline}\label{EQU:BVB(M)-definition}
\BVB(M)_n
:=\{\mathcal E=(E,\nabla)\,\,|
\\
\,\,E\to M\times |\Delta^n| \text{ is a complex vector bundle with connection }\nabla\text{ on }E\}.
\end{multline}
Morphisms $\rho:[n]\to [m]$ of $\Del$ induce maps $\BVB(M)_\rho:\BVB(M)_m\to \BVB(M)_n$ via pullbacks. More precisely, denote the map induced on $|\Delta^\bu|$ from $\rho:[n]\to [m]$ also by $\rho:|\Delta^{n}|\to |\Delta^{m}|$. This then yields the map $\BVB(M)_\rho:\BVB(M)_m\to \BVB(M)_{n}$ via  $\BVB(M)_\rho(E\to M\times |\Delta^m|,\nabla)=((id_M\times \rho)^*(E)\to M\times |\Delta^{n}|,(id_M\times \rho)^*(\nabla))$.\footnote{\label{FN:lax}While the composition of pullbacks does not agree set theoretically with the pullback of the compositions, it does so up to canonical isomorphism. Therefore, the structures defined above are more precisely ``lax'' structures, such as lax simplicial sets, lax presheaves, etc.}

Then, $\BVB:\Man^{op}\to \sSet$ becomes a functor\footnotemark[\value{footnote}] via pullback, i.e., for a smooth map $g:M'\to M$, define $\BVB(M)_n\to \BVB(M')_n$ via the pullback $(E\to M\times |\Delta^n|,\nabla)\mapsto ((g\times id_{|\Delta^n|})^*(E)\to M'\times |\Delta^n|,(g\times id_{|\Delta^n|})^*(\nabla))$, which assembles to a map of simplicial sets  $\BVB(M)\to \BVB(M')$.
\end{definition}

Our Chern character map will have $\BVB(M)$ as its domain, and we next define its range $\OMdR$. The definition of $\OMdR$ closely follows the corresponding definition in \cite[Definition 2.2]{GMTZ1}, but with the crucial difference that now we will use a non-vanishing de Rham differential $d$.

\begin{definition}\label{DEF:OMdR}
We define the de Rham functor as a functor $\OMdR:\Man^{op}\to \sSet$ to be the composition of functors $\OMdR:=\mc F\circ \DK\circ\quot\circ \OdR^\bu(-)$ defined below:
\[
\OMdR:\Man^{op}
\stackrel{\OdR^\bu(-)}{\longrightarrow}\Chain^+
\stackrel{\quot}{\longrightarrow}\Chain^-
\stackrel{\DK}{\longrightarrow}\sAb
\stackrel{\mc F}{\longrightarrow}\sSet.
\]
Here, the four functors $\OdR^\bu(-)$, $\quot$, $\DK$, $\mc F$ are defined as follows:
\begin{itemize}
\item
The first functor $\OdR^\bu(-):\Man^{op}\to\Chain^+$ is the de Rham functor, i.e., for a smooth manifold $M\in \Ob(\Man)$, $\OdR^\bu(M)$ is the (non-negatively graded) cochain complex of \emph{complex valued} de Rham forms $\OdR^\bu(M)=\OdR^\bu(M,\C)$ on $M$ with the de Rham differential $d=d_{\textrm{dR}}$. \item
Next, the functor $Q:\Chain^+\to \Chain^-$ maps a non-negatively graded cochain complex $C=C^{\bu\geq 0}$ to the non-positively graded cochain complex $$Q(C):=C[u]^{\bu\leq 0}=\frac{C[u]}{(C[u])^{\bu >0}},$$ where $u$ is a formal variable of degree $\|u\|=-2$, cf. \cite[Definition B.5]{GMTZ1}.
\item
The Dold-Kan functor $\DK:\Chain^-\to \sAb$ is given as the simplicial abelian group, which in simplicial degree $n$ consists of the cochain maps from normalized cells of the standard $n$-simplex $\Delt{n}$ to $C^{\bu\leq 0}$:
$$\DK(C^{\bu\leq 0})_n:={\Chain^-}(N(\Z\Delt{n}),C^{\bu\leq 0})$$
\item
Finally, $\mathcal F:\sAb\to \sSet$ is the forgetful functor from simplicial abelian groups to simplicial sets. 
\end{itemize}
In particular, the composition $\DKSet :=\mathcal F\circ \DK: \Chain^-\to \sSet$ is the underlying simplicial set of the Dold-Kan functor, cf. \cite[Definition B.3]{GMTZ1}. Since $N(\Z\Delt{n})$ is generated by non-degenerate simplicies of the standard $n$-simplex, we informally think of an $n$-simplex in $\DKSet(C^{\bu\leq 0})_n$ as a labeling of the standard simplex with elements in $C^{\bu\leq 0}$ with the appropriate face conditions. For example, a $2$-simplex in $\DKSet(C^{\bu\leq 0})_2$ is given by the following data:
\[
\begin{tikzpicture}[scale=0.5]
\node (E0) at (0, -0.5) {}; \fill (E0) circle (4pt) node[left] {$\alpha_0$};
\node (E1) at (2, 3) {}; \fill (E1) circle (4pt) node[above] {$\alpha_1$};
\node (E2) at (4, -0.5) {}; \fill (E2) circle (4pt) node[right] {$\alpha_2$};
\draw [-] (E0) -- node[above left] {$\beta_{0,1}$} (E1);
\draw [-] (E1) -- node[above right] {$\beta_{1,2}$} (E2);
\draw [-] (E0) -- node[below] {$\beta_{0,2}$} (E2);
\node (C) at (2,0.5) {$ \gamma_{0,1,2}$};
\node[right] at (7,3.4) {$\alpha_1,\alpha_2,\alpha_3\in C^0 \quad,\quad \beta_{0,1},\beta_{1,2},\beta_{0,2}\in C^{-1} \quad,\quad \gamma_{1,2,3}\in C^{-2}$};
\node[right] at (7.9,2.4) {such that:};
\node[right] at (12,2.2) {$d_C(\gamma_{0,1,2})=\beta_{1,2}-\beta_{0,2}+\beta_{0,1}$};
\node[right] at (12,1.2) {$d_C(\beta_{0,1})=\alpha_1-\alpha_0$};
\node[right] at (12,0.2) {$d_C(\beta_{1,2})=\alpha_2-\alpha_1$};
\node[right] at (12,-0.8) {$d_C(\beta_{0,2})=\alpha_2-\alpha_0$};
\end{tikzpicture}
\]

Thus, we defined the simplicial set
\[
\OMdR(M)=\DKSet(\OdR^\bu(M)\ul)={\Chain^-}(N(\Z\Delt{\bu}),\OdR^\bu(M)\ul)
\]
\end{definition}

We next define a map $\BCh: \BVB \to \OMdR$ of simplicial presheaves. To this end, recall that for a vector bundle with connection, $\mc E=(E,\nabla)$, the Chern character is the even form 
\begin{equation}
\textrm{Ch}(\mc E)=\tr\Big(\exp\big(\frac{R}{2\pi i}\big)\Big)=\sum_{k\geq 0} \frac{\tr(R^k)}{k!\cdot (2\pi i)^k}\quad \in \OdR^{even}(M),
\end{equation}
where $R=\nabla^2\in \Om^2(M,End(E))$ is the curvature of the connection $\nabla$ on $E$, and $i=\sqrt{-1}$. Note that the $\textrm{Ch}(\mc E)$ is closed under the de Rham differential, due to the Bianchi identity $\tilde\nabla R=0$ (where $\tilde \nabla$ is the induced connection on $End(E)$) used in $d(\tr(R^k))=\tr(\tilde \nabla(R^k))=\tr(\sum_j R\dots R(\tilde\nabla R) R\dots R)=0$. It is well-known that the cohomology class of the Chern character is independent of the connection and only depends on the underlying vector bundle. For our purposes, it will be useful to rewrite the Chern character as an element in total degree $0$ by introducing a formal variable $u$ of degree $\|u\|=-2$, so that the degree $\|u\cdot R\|=0$. We thus define the following polynomial in $u$:
\begin{equation}\label{EQU:ChuE-expanded}
\Chu(\mc E):=\tr\Big(\exp\big(\frac{u\cdot R}{2\pi i}\big)\Big)=\sum_{k\geq 0} \frac{u^k\cdot \tr(R^k)}{k!\cdot (2\pi i)^k}\quad \in \OdR^{\bullet}(M)[u],
\end{equation}
which is of total degree $\|\Chu(\mc E)\|=0$. By the same argument as above, $\Chu(\mc E)$ is de Rham $d$-closed.

\begin{definition}\label{DEF:basic-CS-map}
To define the Chern character map $\BCh: \BVB \to \OMdR$ we have to give a map of simplicial sets $\BCh(M):\BVB(M) \to \OMdR(M)=\DKSet(\OdR^\bu(M)\ul)$, i.e., a map $\BCh(M)_n:\BVB(M)_n \to \DKSet(\OdR^\bu(M)\ul)_n={\Chain^-}(N(\Z\Delt{n}),\OdR^\bu(M)\ul)$ for each manifold $M\in \Ob(\Man)$ and each simplicial degree $n=0, 1, 2, \dots$. Thus, for a vector bundle $\mc E=(E,\nabla)$ over $M\times |\Delta^n|$, we need to give a morphism $\BCh(M)_n(\mc E)\in {\Chain^-}(N(\Z\Delt{n}),\OdR^\bu(M)\ul)$. Now, $N(\Z\Delt{n})$ is the free abelian group with generators $e_{i_0,\dots, i_\ell}$ in degree $(-\ell)$, for any $0\leq i_0< \dots < i_\ell\leq n$ and $\ell=0,\dots, n$ (see \cite[Example B.2]{GMTZ1}), and we recall that we denote by $|e_{i_0,\dots, i_\ell}|\subseteq |\Delta^n|$ the corresponding $\ell$-subcell of $|\Delta^n|$. Then, $\BCh(M)_n(\mc E)$ is mapping the $\ell$-cell $e_{i_0,\dots,e_\ell}\in N(\Z\Delt{n})$ to $\BCh(M)_n(\mc E)(e_{i_0,\dots, i_\ell})\in (\OdR^\bu(M)[u])^{-\ell}$ via the assignment
\begin{align}\label{EQU:CH(M)_n(E)(e)}
\BCh(M)_n(\mc E)(e_{i_0,\dots, i_\ell})
&:= \int_{|e_{i_0,\dots, i_\ell}|}\Chu\Big(\mc E\big|_{M\times |e_{i_0,\dots, i_\ell}|}\Big) \\
&\nonumber =\sum_{k\geq 0} \frac{u^k}{k!\cdot (2\pi i)^k}\int_{|e_{i_0,\dots, i_\ell}|}\tr\Big((R|_{M\times |e_{i_0,\dots, i_\ell}|})^k\Big).
\end{align}
In words, we restrict $\mc E$ to the given $\ell$-cell $M\times |e_{i_0,\dots, i_\ell}|$ and integrate over the fiber $|e_{i_0,\dots, i_\ell}|$ of $M\times |e_{i_0,\dots, i_\ell}|$, which yields an element of total degree $\|\BCh(M)_n(\mc E)(e_{i_0,\dots, i_\ell})\|=-\ell$. Informally, we think of $\BCh(M)_n(\mc E)$ as labeling the cells of the standard $n$-simplex with elements in $\OdR^\bu(M)\ul$:
\begin{equation*}
\begin{tikzpicture}[scale=0.5]
\node (E0) at (0, 0) {}; \fill (E0) circle (4pt) node[left] {$\alpha_0$};
\node (E1) at (5, 3) {}; \fill (E1) circle (4pt) node[above] {$\alpha_1$};
\node (E2) at (10, 0) {}; \fill (E2) circle (4pt) node[right] {$\alpha_2$};
\draw [-] (E0) -- node[above left] {$\beta_{0,1}$} (E1);
\draw [-] (E1) -- node[above right] {$\beta_{1,2}$} (E2);
\draw [-] (E0) -- node[below] {$\beta_{0,2}$} (E2);
\node (C) at (5,1.2) {$ \gamma_{0,1,2}$};
\end{tikzpicture}
\end{equation*}
where for $j\in \{0,1,2\}$ and $(j',j'')\in \{(0,1),(0,2),(1,2)\}$:
\begin{align*}
\alpha_j=& \tr(Id)+u\cdot \frac{ \tr(R|_{M\times |e_j|})}{2\pi i}+u^2\cdot\frac{ \tr((R|_{M\times |e_j|})^2)}{2!\cdot (2\pi i)^2}+\dots, \\
\beta_{j',j''}=& \int_{|e_{j',j''}|}\tr(Id)+u\cdot \int_{|e_{j',j''}|}\frac{ \tr(R|_{M\times |e_{j',j''}|})}{2\pi i}+u^2\cdot\int_{|e_{j',j''}|}\frac{ \tr((R|_{M\times |e_{j',j''}|})^2)}{2!\cdot (2\pi i)^2}+\dots, \\
\gamma_{0,1,2}=& \int_{|e_{0,1,2}|}\tr(Id)+u\cdot \int_{|e_{0,1,2}|}\frac{ \tr(R|_{M\times |e_{0,1,2}|})}{2\pi i}+u^2\cdot\int_{|e_{0,1,2}|}\frac{ \tr((R|_{M\times |e_{0,1,2}|})^2)}{2!\cdot (2\pi i)^2}+\dots.
\end{align*}
Note that in the above series the lower $u$-terms vanish whenever the integral is over a de Rham form of lower degree; more precisely, $\BCh(M)_n(\mc E)(e_{i_0,\dots, i_\ell})$ will vanish in powers of $u$ for $u^0, \dots, u^{\lfloor\frac{\ell-1} 2\rfloor}$. Proposition \ref{PROP:Ch-presheaf-map} below states that $\BCh$ gives indeed a map of simplicial presheaves.
\end{definition}

\begin{proposition} \label{PROP:Ch-presheaf-map}
$\BCh: \BVB \to \OMdR$ is a map of simplicial presheaves, i.e., a natural transformation between $\BVB$ and $\OMdR$.
\end{proposition}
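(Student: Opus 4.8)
The statement asserts that $\BCh$ is a natural transformation between the functors $\BVB,\OMdR:\Man^{op}\to\sSet$. Unwinding the definitions, this splits into three checks, and I would organize the proof around them. First, for each fixed manifold $M$, each simplicial degree $n$, and each $\mc E=(E,\nabla)\in\BVB(M)_n$, the assignment $\BCh(M)_n(\mc E)$ must be a genuine morphism in $\Chain^-$, i.e.\ a \emph{chain map} $N(\Z\Delta^n)\to\OdR^\bu(M)\ul$. Second, for fixed $M$ the collection $\{\BCh(M)_n\}_n$ must commute with the simplicial structure maps, so that $\BCh(M):\BVB(M)\to\OMdR(M)$ is a map of simplicial sets. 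Third, the maps $\BCh(M)$ must be natural in $M$. I expect essentially all of the real content, and all of the sign bookkeeping, to sit in the first point.

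For the chain-map condition, recall that $e_{i_0,\dots,i_\ell}$ sits in degree $-\ell$ with differential $d(e_{i_0,\dots,i_\ell})=\sum_{j=0}^\ell(-1)^j e_{i_0,\dots,\widehat{i_j},\dots,i_\ell}$ (as read off from the $2$-simplex picture in Definition \ref{DEF:OMdR}), while the target differential is the de Rham $d$ extended $u$-linearly and then pushed through the truncation $\quot$ onto the $\bu\le 0$ part. Since $\BCh(M)_n(\mc E)(e_{i_0,\dots,i_\ell})=\int_{|e_{i_0,\dots,i_\ell}|}\Chu(\mc E)$ has total degree $-\ell$, I would establish
\[
d\Big(\int_{|e_{i_0,\dots,i_\ell}|}\Chu(\mc E)\Big)=\sum_{j=0}^\ell(-1)^j\int_{|e_{i_0,\dots,\widehat{i_j},\dots,i_\ell}|}\Chu(\mc E)
\]
by combining two inputs: the closedness of $\Chu(\mc E)$ under the \emph{total} de Rham differential on $M\times|e_{i_0,\dots,i_\ell}|$ (which is the Bianchi-identity computation recorded just before the definition, and which survives restriction to subcells because the curvature of a restricted connection is the restriction of the curvature), and Stokes' theorem for fiber integration, $d_M\int_{|e|}\om=\int_{|e|}d_{M\times|e|}\om\pm\int_{\partial|e|}\om$. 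Closedness annihilates the first boundary term, and the combinatorial boundary $\partial|e_{i_0,\dots,i_\ell}|=\sum_j(-1)^j|e_{i_0,\dots,\widehat{i_j},\dots,i_\ell}|$, with the standard induced orientations, reproduces the differential on the right. The truncation to $\ul$ is not an obstruction but is in fact forced: for $\ell=0$ the relation reads $d(\BCh(e_i))=0$, and indeed $d(\BCh(e_i))$ has total degree $+1$ and is killed by $\quot$, matching $d(e_i)=0$. Reconciling the orientation conventions on $|\Delta^\ell|$ and its faces with the signs $(-1)^j$, and tracking the sign produced when $d_M$ is commuted past the fiber integral of an even form over an $\ell$-cell, is the delicate part, and this is where I expect the main difficulty to lie.

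For simplicial naturality I would fix a morphism $\rho:[n]\to[m]$ of $\Del$ and show $\BCh(M)_n\circ\BVB(M)_\rho=(N(\Z\rho))^*\circ\BCh(M)_m$, where the right-hand operator is precomposition with the induced map on normalized chains. Since the Chern character commutes with pullback, $\Chu((id_M\times\rho)^*\mc E)=(id_M\times\rho)^*\Chu(\mc E)$, and the change-of-variables formula for fiber integration converts $\int_{|e_{i_0,\dots,i_\ell}|}(id_M\times\rho)^*\Chu(\mc E)$ into an integral over the image cell $\rho(|e_{i_0,\dots,i_\ell}|)$. As $\rho$ is order-preserving, when it is injective on $\{i_0,\dots,i_\ell\}$ this image is exactly $|e_{\rho(i_0),\dots,\rho(i_\ell)}|$ with matching orientation, recovering the generator that $N(\Z\rho)$ produces; when $\rho$ identifies two of these vertices it collapses the $\ell$-cell onto a lower-dimensional image, the pulled-back form has no component along the collapsed fiber direction, and the integral vanishes, matching the fact that $N(\Z\rho)$ sends such a generator to a degenerate simplex, hence to $0$ in the normalized complex $N(\Z\Delta^\bu)$.

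Finally, naturality in $M$ is the most routine step: for a smooth $g:M'\to M$ one has $(g\times id)^*\Chu(\mc E)=\Chu((g\times id)^*\mc E)$, and fiber integration over $|\Delta^\bu|$ commutes with pullback in the base factor, so $\BCh(M')(g^*\mc E)=g^*\BCh(M)(\mc E)$ at once. I note throughout that all these identities hold only up to the canonical isomorphisms relating iterated pullbacks, so they are genuinely equalities of the associated ``lax'' structures in the sense of the footnote to Definition \ref{DEF:VB_Delta}; the trace renders these discrepancies invisible. In summary, the substance of the proposition is Stokes' theorem for fiber integration together with the closedness of the Chern character, and the principal labor is the orientation and sign reconciliation in the chain-map identity.
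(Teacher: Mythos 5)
Your proposal is correct and follows essentially the same route as the paper: the same three-part decomposition (chain map via Stokes for fiber integration plus $d$-closedness of $\Chu$, simplicial compatibility via the degenerate-versus-nondegenerate case split for $\rho_*(e_{i_0,\dots,i_\ell})$ in the normalized complex, and naturality in $M$ via compatibility of $\Chu$ and fiber integration with pullback). The only cosmetic difference is that the paper phrases the simplicial check by factoring $\rho\circ\delta_{j_{n-\ell}}\circ\dots\circ\delta_{j_1}$ canonically in $\Del$ rather than by looking at the images $\rho(i_0),\dots,\rho(i_\ell)$ directly, which amounts to the same case analysis.
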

\begin{proof}
We need to check three properties, first that $\BCh(M)_n(\mc E)$ is a chain map, second that $\BCh(M)$ is a map of simplicial sets, and third that $\BCh$ is a natural transformation. (Roughly speaking, the first compares the differential build out of the $(-\circ \delta_j):\Del([\ell-1],[n])\to \Del([\ell],[n])$ with the de Rham differential, the second compares the simplicial maps $(\rho\circ -):\Del([\ell],[n])\to \Del([\ell],[m])$ with the ones in $\BVB(M)$, and the third checks compatibility under smooth maps $g:M'\to M$.)

First, we check that $\BCh(M)_n(\mc E):N(\Z\Delt{n})\to\OdR^\bu(M)\ul$ is a chain map, i.e., it respects the differentials:
\begin{multline}\label{EQU:CHUE-chain-map-proof}
d\Big(\BCh(M)_n(\mc E)(e_{i_0,\dots, i_\ell})\Big)
=
d\bigg( \int_{|e_{i_0,\dots,i_\ell}|}\Chu\Big(\mc E\big|_{M\times |e_{i_0,\dots,i_\ell}|}\Big) \bigg)
\\
=
 \int_{\partial |e_{i_0,\dots,i_\ell}|}\Chu\Big(\mc E\big|_{M\times |e_{i_0,\dots,i_\ell}|}\Big) \Big|_{\partial |e_{i_0,\dots,i_\ell}|}
=
\sum_j (-1)^j \int_{ |e_{i_0,\dots,\widehat{i_j}, \dots,i_\ell}|}\Chu\Big(\mc E\big|_{M\times |e_{i_0,\dots,\widehat{i_j}, \dots,i_\ell}|}\Big) 
\\
=
\sum_j (-1)^j \BCh(M)_n(\mc E)(e_{i_0,\dots, \widehat{i_j}, \dots, i_\ell}))
=
\BCh(M)_n(\mc E)(d(e_{i_0,\dots, i_\ell})),
\end{multline}
where in the second equality we used the integration over the fiber formula $d\big(\int_M \alpha\big) = (-1)^{\dim(M)-1}\int_{\partial M}\alpha|_{\partial M} +(-1)^{\dim(M)}\int_M d\alpha$, and the fact that $\Chu(\tilde{\mc E})$ is $d$-closed for any $\tilde{\mc E}$.

Next, we check that $\BCh(M)$ is a map of simplicial sets. For a morphism $\rho:[n]\to [m]$, inducing the map $\rho_*:N(\Z\Delt{n})\to N(\Z\Delt{m})$, the diagram
\begin{equation}\label{DIAGRAM:CH(M)-is-simplicial-map}
\xymatrix{ \BVB(M)_m \ar^{\BCh(M)_m\hspace{.7in}}[rr] \ar_{\BVB(M)_\rho}[d] && {\Chain^-}(N(\Z\Delt{m}),\OdR^\bu(M)\ul) \ar^{\kappa\mapsto \kappa\circ \rho_*}[d]   \\ 
\BVB(M)_n \ar^{\BCh(M)_n\hspace{.7in}}[rr]&& {\Chain^-}(N(\Z\Delt{n}),\OdR^\bu(M)\ul) }
\end{equation}
commutes, which can be seen as follows. Let $\mc E\in \BVB(M)_m $ and $e_{i_0,\dots, i_\ell}\in N(\Z\Delt{n})$, so that
\begin{multline}\label{EQU:Ch(U)-circ-VB(U)_rho}
(\BCh(M)_n\circ \BVB(M)_\rho(\mc E))(e_{i_0,\dots, i_\ell})
=
\BCh(M)_n((id_M\times \rho)^*\mc E)(e_{i_0,\dots, i_\ell})
\\
=
\int_{|e_{i_0,\dots, i_\ell}|} \Chu(((id_M\times \rho)^*\mc E)|_{M\times |e_{i_0,\dots, i_\ell}|})
=
\int_{|\Delta^\ell|} \Chu((id_M\times (\rho\circ \delta_{j_{n-\ell}}\circ\dots\circ \delta_{j_1}))^*\mc E),
\end{multline}
where in the last equality, we wrote $|e_{i_0,\dots, i_\ell}|=\delta_{j_{n-\ell}}\circ\dots\circ \delta_{j_1}(|\Delta^\ell|)\subseteq |\Delta^n|$ for a partition $\{i_0,\dots, i_\ell\}\sqcup \{j_1,\dots, j_{n-\ell}\}= \{0,\dots, n\}$ of $\{0,\dots, n\}$ with $0\leq i_0< \dots < i_\ell\leq n$ and $0\leq j_1< \dots< j_{n-\ell}\leq n$, as we did on page \pageref{DEF:|e_i0...in|}. Now, the composition $[\ell]\stackrel{\delta_{j_{n-\ell}}\circ\dots\circ \delta_{j_1}}\longrightarrow[n]\stackrel{\rho}\longrightarrow [m]$ can be rewritten in precisely one of the following two ways (via the morphism relations in $\Del$). Either (in case a) $\rho\circ \delta_{j_{n-\ell}}\circ\dots\circ \delta_{j_1}$ is equal to the composition $\varphi\circ \sigma_p:[\ell]\stackrel{\sigma_p} \longrightarrow[\ell-1] \stackrel{\varphi}\longrightarrow [m]$ for some degeneracy $\sigma_p$ and some $\varphi$, or (in case b) $\rho\circ \delta_{j_{n-\ell}}\circ\dots\circ \delta_{j_1}$ is equal to $\delta_{j'_{m-\ell}}\circ\dots\circ \delta_{j'_1}:[\ell]\longrightarrow[m]$ for some $0\leq j'_1< \dots< j'_{m-\ell}\leq m$. We show that in either case diagram \eqref{DIAGRAM:CH(M)-is-simplicial-map} commutes.
\begin{enumerate}
\item[(a)]
In this case, the map on the topological simplicies $|\Delta^\ell|\stackrel{\sigma_p} \longrightarrow|\Delta^{\ell-1}| \stackrel{\varphi}\longrightarrow |\Delta^m|$ factors through the lower dimensional simplex $|\Delta^{\ell-1}|$, so that the integral over $|\Delta^{\ell}|$ on the right hand side of \eqref{EQU:Ch(U)-circ-VB(U)_rho} vanishes. On the other hand, the map $\rho_*:N(\Z\Delt{n})\to N(\Z\Delt{m})$ appearing in \eqref{DIAGRAM:CH(M)-is-simplicial-map}, maps $e_{i_0,\dots, i_\ell}\in N(\Z\Delt{n})$ to a degenerate element, so that $\rho_*(e_{i_0,\dots, i_\ell})=0$ in the normalized cochain complex $N(\Z\Delt{m})$. Thus, \eqref{DIAGRAM:CH(M)-is-simplicial-map} commutes in this case.
\item[(b)]
In this case, if we use $0\leq j'_1< \dots< j'_{m-\ell}\leq m$ to partition $\{0,\dots, m\}=\{i'_0,\dots, i'_\ell\}\sqcup \{j'_1,\dots, j'_{m-\ell}\}$ via the corresponding $0\leq i'_0< \dots < i'_\ell\leq m$, then $\rho_*:N(\Z\Delt{n})\to N(\Z\Delt{m})$ maps $\rho_*:e_{i_0,\dots, i_\ell}\mapsto e_{i'_0,\dots, i'_\ell}$, so that the right hand side of \eqref{EQU:Ch(U)-circ-VB(U)_rho} becomes
\begin{multline*}
\hspace{.5in}\int_{|\Delta^\ell|} \Chu((id_M\times (\delta_{j'_{m-\ell}}\circ\dots\circ \delta_{j'_1}))^*\mc E)
=
\int_{|e_{i'_0,\dots, i'_\ell}|} \Chu(\mc E|_{M\times |e_{i'_0,\dots, i'_\ell}|})
\\
=
\BCh(M)_m(\mc E)(e_{i'_0,\dots, i'_\ell})
=
(\BCh(M)_m(\mc E)\circ \rho_*)(e_{i_0,\dots, i_\ell}).
\end{multline*}
Thus, \eqref{DIAGRAM:CH(M)-is-simplicial-map} commutes in this case as well.
\end{enumerate}

Finally, for a map of smooth manifolds $g:M'\to M$, the diagram
\begin{equation}\label{EQU:ChDel(M)-is-natural-transformation}
\xymatrix{ \BVB(M) \ar^{\BVB(g)}[rr] \ar_{\BCh(M)}[d] && \BVB(M') \ar^{\BCh(M')}[d]   \\ 
\OMdR(M) \ar^{\OMdR(g)}[rr]&& \OMdR(M') }
\end{equation}
commutes, since for a vector bundle $\mc E\in \BVB(M)_n$ over $M\times |\Delta^n|$, and a generator $e_{i_0,\dots, i_\ell}$ of $N(\Z\Delt{n})$, we have
\begin{multline*}
(\OMdR(g)_n\circ \BCh(M)_n(\mc E))(e_{i_0,\dots, i_\ell})
=
g^*\Big(\int_{|e_{i_0,\dots, i_\ell}|}\Chu(\mc E|_{M\times |e_{i_0,\dots, i_\ell}|})\Big)
\\
=
\int_{|e_{i_0,\dots, i_\ell}|}\Chu((g\times id_{|\Delta^n|})^*(\mc E)|_{M'\times |e_{i_0,\dots, i_\ell}|})
=
(\BCh(M')_n\circ \BVB(g)_n (\mc E))(e_{i_0,\dots, i_\ell}).
\end{multline*}
This shows that $\BCh$ is a natural transformation as claimed.
\end{proof}

\section{Chern character via the nerve of the category of vector bundles}\label{SEC:special-affine}

In this section, we define a version of the Chern character that is associated to a sequence of vector bundles and isomorphisms of bundles. This will be done by interpreting such a sequence as a bundle on the manifold crossed with a simplex, so that we may then use the Chern character map from the previous section. 

\begin{definition}\label{DEF:vb-nabla-VB-nabla}
Denote by $\vbn(M)$ the category of complex vector bundles $\mc E=(E,\nabla)$ over a smooth manifold $M$ with connections, whose morphisms $f\in \Mor(\mc E',\mc E)$ consist of bundle isomorphisms $f:E'\to E$ over the identity of $M$, where $f$ has no particular compatibility with the connections. Denote by $\VB(M)=\Nerve(\vbn(M))$ the nerve of this category, i.e., $\VB(M)$ is the simplicial set, whose set of $n$-simplicies $\VB(M)_n$ is given by sequences of bundles with connections $\mc E_j=(E_j,\nabla_j)$ and isomorphisms
\begin{equation}\label{EQU:vecE}
\vec{\mc E} = (\mc E_0\stackrel {f_1} \longrightarrow \mc E_1\stackrel {f_2} \longrightarrow \mc E_2\longrightarrow \dots \longrightarrow \mc E_{n-2}\stackrel {f_{n-1}} \longrightarrow \mc E_{n-1}\stackrel {f_n} \longrightarrow \mc E_n).
\end{equation}

We are also interested in a variation of the above simplicial set, where we forget about the connection. Denote by $\vb(M)$ the category of vector bundles $E$ over $M$ (where no particular connection on $E$ is chosen), and let $\BUN(M)=\Nerve(\vb(M))$ is its nerve.

In particular, we get the simplicial presheaves $\VB$ (respectively $\BUN$) which assigns to each manifold $M$ the nerve of the groupoid of vector bundles with connections (respectively without connections) and isomorphisms between them.
\end{definition}
Note that for each choice of manifold $M$, the map $\vbn(M) \to \vb(M)$ which merely forgets the connection, is a weak equivalence of categories and so induces a weak equivalence of simplicial presheaves as recorded in the following proposition. 
\begin{proposition}\label{REM: VB iso VB nabla}
The natural map of simplicial presheaves, $\VB \to \BUN$, which forgets connection data is a weak equivalence in the global projective model structure on simplicial presheaves.
\end{proposition}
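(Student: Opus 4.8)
The plan is to use the defining property of the global projective model structure: a morphism of simplicial presheaves is a weak equivalence precisely when it is an objectwise (sectionwise) weak equivalence of simplicial sets. Thus it suffices to fix a smooth manifold $M\in \Ob(\Man)$ and to show that the map $\VB(M)=\Nerve(\vbn(M)) \to \Nerve(\vb(M))=\BUN(M)$, induced on nerves by the forgetful functor $U:\vbn(M)\to \vb(M)$, $(E,\nabla)\mapsto E$, is a weak equivalence of simplicial sets.

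First I would establish that $U$ is an equivalence of categories. For full faithfulness, observe that in both $\vbn(M)$ and $\vb(M)$ the morphisms between two objects are exactly the bundle isomorphisms over $\mathrm{id}_M$, with no compatibility imposed on the connections; hence $U$ is a bijection on each hom-set and is fully faithful. For essential surjectivity, given any bundle $E\in \Ob(\vb(M))$ one chooses a connection $\nabla$ on $E$—which exists because $M$ is a paracompact smooth manifold admitting partitions of unity, so local trivial connections patch together—and then $U(E,\nabla)=E$. This gives surjectivity on objects, so $U$ is essentially surjective and therefore an equivalence of categories.

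Next I would invoke the standard fact that the nerve functor sends equivalences of categories to simplicial homotopy equivalences: a quasi-inverse together with the natural isomorphisms witnessing $U$ as an equivalence produce, under $\Nerve$, explicit simplicial homotopies, since a natural isomorphism, viewed as a functor out of $\mathcal{C}\times[1]$, yields under the nerve a homotopy $\Nerve(\mathcal{C})\times\Delta^1\to \Nerve(\mathcal{D})$. In particular $\Nerve(U)=\VB(M)\to \BUN(M)$ is a homotopy equivalence of simplicial sets, hence a weak equivalence. Since this holds for every $M$, the map $\VB\to \BUN$ is an objectwise weak equivalence, and therefore a weak equivalence in the global projective model structure.

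The only place any real content enters is essential surjectivity, which rests on the existence of a connection on an arbitrary smooth complex vector bundle; this is exactly where the smooth, paracompact structure of $M$ is used, and it is the step I would treat most carefully. I would also remark that although $\VB$ and $\BUN$ are only lax presheaves in the sense of the footnote to Definition \ref{DEF:VB_Delta}, the forgetful functor $U$ is strict and natural in $M$, so no coherence subtleties arise in identifying $\VB\to \BUN$ as a genuine natural transformation to which the objectwise criterion applies.
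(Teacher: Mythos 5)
Your proposal is correct and follows the same route as the paper, which simply observes that the forgetful functor $\vbn(M)\to\vb(M)$ is an equivalence of categories (fully faithful because morphisms in both categories ignore connections, essentially surjective because every smooth bundle admits a connection) and hence induces an objectwise weak equivalence of nerves. You have merely spelled out the details the paper leaves implicit.
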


The next definition describes a way of thinking of an $n$-simplex in the nerve $\VB(M)_n$ as a vector bundle with connection over $M\times \Delta^n$, i.e., as an $n$-simplex in $\BVB(M)_n$ from definition \ref{DEF:VB_Delta}, by affinely extending connections on the vertices of $\Delta^n$. Unfortunately, this ``assignment'' from $\VB(M)_n$ to $\BVB(M)_n$ is not a map of simplicial sets (see lemma \ref{LEM:VB-to-BVB}), and we will recover a map of simplicial sets after composing it with the Chern character map for  $\BVB$ (see proposition \ref{PROP:CH:VBN-to-OM}).

\begin{definition}\label{DEF:E-to-EDelta}
To any $\vec{\mc E}= (\mc E_0\stackrel {f_1}\longrightarrow \dots \stackrel {f_n} \longrightarrow \mc E_n)\in \VB(M)_n$ in the nerve where $\mc E_j=(E_j,\nabla_j)$ as in \eqref{EQU:vecE}, we associate a complex vector bundle $\vec{\mc E}_\Delta=(E_\Delta\to M\times \Delta^n,\nabladelta) \in \BVB(M)_n$ as in \eqref{DEF:VB_Delta} as follows. First, to define the vector bundle $E_\Delta\to M\times |\Delta^n|$ we extend one of the vector bundles $E_j\to M$ from $\vec{\mc E}$ by crossing it with $|\Delta^n|$. Explicitly we make the \emph{choice} of extending the zeroth bundle, i.e.,  $E_\Delta:=E_0\times |\Delta^n|\to M\times |\Delta^n|$. To define the connection $\nabladelta$ on $E_\Delta$, first consider for $j=0,\dots, n$ the connections given by transfer of structure from $E_j$ to $E_0$,
\begin{equation}\label{EQU:nabdladeltaj}
\nabladelta_{,j}:=f_1^{-1}\circ \dots \circ f_j^{-1}\circ \nabla_j\circ f_j \circ \dots \circ f_1
\quad\quad\text{on }E_0.
\end{equation}
Then, we define $\nabladelta$ on $E_0\times |\Delta^n|$ as the affine span of these connections over the simplex, i.e.,
\begin{equation}\label{EQU:nabdladelta}
\nabladelta:=d_{|\Delta^n|}+\nabladelta_{,n}+t_1\cdot (\nabladelta_{,0}-\nabladelta_{,1})+t_2\cdot (\nabladelta_{,1}-\nabladelta_{,2})+\dots + t_n\cdot (\nabladelta_{,n-1}-\nabladelta_{,n}).
\end{equation}
Here, $d_{|\Delta^n|}=\sum_i dt_i\frac{\partial}{\partial t_i}$ is the de Rham differential on $|\Delta^n|$, and $d_{|\Delta^n|}$ and $\nabladelta_{,0},\dots, \nabladelta_{,n}$ are all extended to $E_0\times |\Delta^n|\to M\times |\Delta^n|$. Note that the $j$th vertex $|e_j|$ of  $|\Delta^n|$ is precisely at the coordinates $t_1=\dots=t_j=0$ and $t_{j+1}=\dots= t_n=1$ (cf. \eqref{DEF:|e_i0...in|}), for which the restriction of $\nabladelta$ to $E_0\times |e_j|$ becomes $\nabladelta|_{E_0\times |e_j|}=\nabladelta_{,j}$.
\begin{equation*}
\begin{tikzpicture}[xscale=0.5,yscale=0.45]
\draw [->] (0,0)--(7,-2); \draw [->] (0,0) -- (0,7); \draw [->] (0,0) -- (10,5);
\draw [ultra thick] (0,0)--(0,4)--(6,7)--(9,6);
\draw [ultra thick] (9,6)--(0,0);
\draw [ultra thick] (0,0)--(6,7)--(9,6)--(0,4);
\draw (0,0)--(9,2)--(9,6)--(9,2)--(6,3)--(6,7);
\node [left] at (0,0) {$\nabladelta_{,3}$};
\node [left] at (0,4) {$\nabladelta_{,2}$};
\node [above] at (6,7) {$\nabladelta_{,1}$};
\node [right] at (8.7,6.5) {$\nabladelta_{,0}$};
\node [right] at (7,-2) {$t_1$};
\node [right] at (10,4.5) {$t_2$};
\node [above] at (0,7) {$t_3$};
\end{tikzpicture}
\end{equation*}

\end{definition}

The next lemma shows that the assignment $(\vec{\mc E}\in \VB(M)_\bu)\mapsto (\vec{\mc E}_\Delta \in \BVB(M)_\bu)$ respects the simplicial face and degeneracy maps only up to isomorphism, and so is not an actual map of simplicial sets.

\begin{lemma}\label{LEM:VB-to-BVB}
For $\vec{\mc E}\in \VB(M)_m$ and a morphism $\rho:[n]\to [m]$ of $\Del$, the bundles $(\rho^*(\vec{\mc E}))_\Delta$ and $\rho^*(\vec{\mc E}_\Delta)$ may not be equal. However, there is an isomorphism of vector bundles $F_\rho:(\rho^*(\vec{\mc E}))_\Delta\to \rho^*(\vec{\mc E}_\Delta)$ over $M\times |\Delta^n|$, which commutes with given connections.
\end{lemma}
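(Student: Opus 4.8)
The plan is to exhibit the isomorphism $F_\rho$ explicitly and then verify that it intertwines the two connections after rewriting the affine span \eqref{EQU:nabdladelta} in barycentric coordinates. Set $r:=\rho(0)$. Unwinding the simplicial structure of the nerve, $\rho^*(\vec{\mc E})$ is the sequence whose $i$-th bundle is $\mc E_{\rho(i)}$ and whose $i$-th structure map is the composite $g_i:=f_{\rho(i)}\circ\dots\circ f_{\rho(i-1)+1}$ (the identity when $\rho(i-1)=\rho(i)$). In particular its zeroth bundle is $E_r$, so by definition \ref{DEF:E-to-EDelta} the underlying bundle of $(\rho^*(\vec{\mc E}))_\Delta$ is $E_r\times|\Delta^n|$, whereas that of $\rho^*(\vec{\mc E}_\Delta)=(id_M\times\rho)^*(E_0\times|\Delta^m|)$ is $E_0\times|\Delta^n|$. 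This already shows the two bundles need not be equal (unless $r=0$), and it pinpoints the natural candidate for the comparison isomorphism: the transfer map $\phi:=f_1^{-1}\circ\dots\circ f_r^{-1}\colon E_r\to E_0$, extended to $F_\rho:=\phi\times id_{|\Delta^n|}$ (which is the identity when $r=0$).

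The key simplification is to rewrite \eqref{EQU:nabdladelta} in barycentric form. Using the model $|\Delta^n|=\{0\le t_1\le\dots\le t_n\le 1\}$ of \eqref{EQU:standard-n-simplex} with the conventions $t_0=0$ and $t_{n+1}=1$, the barycentric weight of the vertex $|e_j|$ is $\lambda_j=t_{j+1}-t_j$, and a telescoping computation turns \eqref{EQU:nabdladelta} into $\nabladelta=d_{|\Delta^n|}+\sum_{j=0}^n\lambda_j\cdot\nabladelta_{,j}$. I would then conjugate the connection of $(\rho^*(\vec{\mc E}))_\Delta$ by $F_\rho$ term by term. Writing $\nabla'_{\Delta,i}=\psi_i^{-1}\circ\nabla_{\rho(i)}\circ\psi_i$ for its transfer connections on $E_r$, where $\psi_i:=g_i\circ\dots\circ g_1=f_{\rho(i)}\circ\dots\circ f_{r+1}$, the crucial cancellation is $\psi_i\circ\phi^{-1}=f_{\rho(i)}\circ\dots\circ f_1$. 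This yields $\phi\circ\nabla'_{\Delta,i}\circ\phi^{-1}=\nabladelta_{,\rho(i)}$, exactly the transfer connection of $\vec{\mc E}$ to $E_0$ from \eqref{EQU:nabdladeltaj}; and since $\phi$ is constant along the simplex it fixes $d_{|\Delta^n|}$. Hence the conjugated connection is $d_{|\Delta^n|}+\sum_{i=0}^n\lambda_i\cdot\nabladelta_{,\rho(i)}$.

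On the other side, the realization map $\rho\colon|\Delta^n|\to|\Delta^m|$ is the affine map sending the $i$-th vertex to the $\rho(i)$-th vertex, so on barycentric coordinates the $l$-th weight pulls back as $\mu_l\circ\rho=\sum_{i\,:\,\rho(i)=l}\lambda_i$. Because the $\nabladelta_{,l}$ and their differences are forms pulled back from $M$, and $(id_M\times\rho)^*$ sends $d_{|\Delta^m|}$ to $d_{|\Delta^n|}$, pulling back the barycentric form $\nabladelta^{(m)}=d_{|\Delta^m|}+\sum_l\mu_l\cdot\nabladelta_{,l}$ gives $d_{|\Delta^n|}+\sum_l(\mu_l\circ\rho)\cdot\nabladelta_{,l}=d_{|\Delta^n|}+\sum_{i=0}^n\lambda_i\cdot\nabladelta_{,\rho(i)}$, which agrees with the conjugated connection of the previous paragraph. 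This proves $F_\rho$ is an isomorphism commuting with the connections.

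I expect the main obstacle to be the coordinate bookkeeping in the third paragraph: one must check that the geometric realization of $\rho$, written in the \emph{ordered} coordinates \eqref{EQU:standard-n-simplex} actually used in the paper, really is affine on barycentric coordinates — equivalently, that the face and degeneracy formulas of Notation \ref{NOTATION:simplicial-sets} match ``insert/delete a barycentric weight.'' This dictionary between the two coordinate systems is what makes the two affine combinations of connections manifestly equal; once it is in place, everything else reduces to the telescoping identity for $\lambda_j=t_{j+1}-t_j$ and the single cancellation $\psi_i\circ\phi^{-1}=f_{\rho(i)}\circ\dots\circ f_1$.
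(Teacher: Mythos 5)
Your proof is correct, and it takes a genuinely different route from the paper's. The paper reduces to the generating morphisms of $\Del$ and checks each case in the ordered coordinates of \eqref{EQU:standard-n-simplex}: for every degeneracy $\sigma_j$ and every face $\delta_j$ with $j\geq 1$ the two bundles turn out to be literally equal (each affine span is determined by its values at the vertices, and the realization of $\rho$ repeats or omits vertices accordingly), and only $\delta_0$ requires a nontrivial isomorphism, namely $f_1\times id_{|\Delta^{m-1}|}$. You instead treat an arbitrary $\rho$ in one stroke by (i) rewriting \eqref{EQU:nabdladelta} in barycentric form $d_{|\Delta^n|}+\sum_j \lambda_j\cdot\nabladelta_{,j}$ with $\lambda_j=t_{j+1}-t_j$, and (ii) observing that the geometric realization of $\rho$ is affine on barycentric coordinates, so that the whole lemma reduces to the single conjugation identity $\phi\circ\nabla'_{\Delta,i}\circ\phi^{-1}=\nabladelta_{,\rho(i)}$ for the transfer map $\phi=f_1^{-1}\circ\dots\circ f_{\rho(0)}^{-1}$. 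What your version buys: a closed-form $F_\rho$ for every $\rho$ at once (nontrivial exactly when $\rho(0)\neq 0$), which subsumes the paper's $\delta_0$ case and sidesteps the implicit step in the paper's opening sentence that the generator-wise isomorphisms compose coherently to give $F_\rho$ for a general $\rho$. What the paper's version buys: it never leaves the coordinates actually fixed in Notation \ref{NOTATION:simplicial-sets}, so no dictionary between ordered and barycentric coordinates is needed. The ``obstacle'' you flag is real but routine: one verifies $\mu_l\circ\rho=\sum_{i:\rho(i)=l}\lambda_i$ on the face and degeneracy formulas of Notation \ref{NOTATION:simplicial-sets} (a face inserts a zero barycentric weight, a degeneracy merges two adjacent weights), and since both descriptions of the realization are functorial and agree on these generators, they agree for all $\rho$.
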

\begin{proof}
It is enough to show that there are isomorphisms $F_\rho$ for $\rho$ being any degeneracy $\sigma_j:[m+1]\to[m]$ or any face map $\delta_j:[m-1]\to[m]$. Let $\vec {\mc E}=(\mc E_0\stackrel {f_1} \to \dots \stackrel {f_m}\to \mc E_m)\in\VB(M)_m$.

Then $\sigma_j^*(\vec {\mc E})=(\mc E_0\stackrel {f_1} \to \dots\stackrel {f_j}\to\mc E_j \stackrel {id}\to \mc E_j \stackrel {f_{j+1}}\to\dots \stackrel {f_m}\to \mc E_m)$ is given by ``repeating'' the $j$th vector bundle, so that $(\sigma_j^*(\vec {\mc E}))_{\Delta}$ is the bundle on $E_0\times |\Delta^{m+1}|$ with connection given by the affine span of $\nabladelta_{,0},\dots,\nabladelta_{,j},\nabladelta_{,j},\dots, \nabladelta_{,m}$ from \eqref{EQU:nabdladeltaj}. On the other hand, $\vec{\mc E}_\Delta$ is the bundle $E_0\times |\Delta^{m}|$ with connection given by the affine span of $\nabladelta_{,0},\dots,\nabladelta_{,j},\dots, \nabladelta_{,m}$. To obtain $\sigma_j^*(\vec{\mc E}_\Delta)$ we pull back $\vec{\mc E}_{\Delta}$ under the map $id_M\times \sigma_j:M\times|\Delta^{m+1}|\to M\times |\Delta^m|$ where $\sigma_j:|\Delta^{m+1}|\to |\Delta^{m}|$, $\sigma_j(t_1,\dots, t_{m+1})=(t_1,\dots,\widehat{t_{j+1}},\dots t_{m+1})$; cf. \ref{NOTATION:simplicial-sets}. Recall from \eqref{EQU:ith-vertex} that the $i$th vertex $|e_i|\subseteq |\Delta^n|$ is given by the coordinates $t_1=\dots=t_i=0$ and $t_{i+1}=\dots=t_{n}=1$, so that $\sigma_j$ maps $|e_i|$ to $|e_i|$ for $i=0,\dots, j$, and $|e_i|$ to $|e_{i-1}|$ for $i=j+1,\dots, m+1$, which means that under the pullback of $id_M\times \sigma_j$ we get the connection given by the affine span of $\nabladelta_{,0},\dots,\nabladelta_{,j},\nabladelta_{,j},\dots, \nabladelta_{,m}$, just as we did in the case of $(\sigma_j(\vec {\mc E}))_{\Delta}$.

Similarly, for $j=1,\dots, m$, the $j$th face map for is given by ``skipping'' the $j$th vector bundle, $\delta_j^*(\vec {\mc E})=(\mc E_0\stackrel {f_1} \to \dots\stackrel {f_{j-1}}\to\mc E_{j-1} \stackrel {f_{j+1}\circ f_{j}}\to \mc E_{j+1} \stackrel {f_{j+2}}\to\dots \stackrel {f_m}\to \mc E_m)$ so that $(\delta_j^*(\vec {\mc E}))_{\Delta}$ is the bundle on $E_0\times |\Delta^{m-1}|$ with connection given by the affine span of $\nabladelta_{,0},\dots,\nabladelta_{,j-1},\nabladelta_{,j+1},\dots, \nabladelta_{,m}$ from \eqref{EQU:nabdladeltaj}. On the other hand, $\delta_j^*(\vec{\mc E}_\Delta)$ is the pullback of $\vec{\mc E}_{\Delta}$ under the map $id_M\times \delta_j:M\times|\Delta^{m-1}|\to M\times |\Delta^m|$ where $\delta_j:|\Delta^{m-1}|\to |\Delta^{m}|$ given explicitly in \ref{NOTATION:simplicial-sets} maps $|e_i|$ to $|e_i|$ for $i=0,\dots, j-1$ and $|e_i|$ to $|e_{i+1}|$ for $i=j,\dots, m-1$. This means that the pullback of $id_M\times \delta_j$ gives the connection given by the affine span of $\nabladelta_{,0},\dots,\nabladelta_{,j-1},\nabladelta_{,j+1},\dots, \nabladelta_{,m}$, just as in the case of $(\delta_j(\vec {\mc E}))_{\Delta}$.

Finally, the $0$th face is given by $\delta_0^*(\vec {\mc E})=(\mc E_1\stackrel {f_2} \to \dots\to \mc E_{j+1} \stackrel {f_{j+2}}\to\dots \stackrel {f_m}\to \mc E_m)$ so that $(\delta_j^*(\vec {\mc E}))_{\Delta}$ is the bundle on $E_1\times |\Delta^{m-1}|$ with connection given by the affine span of $\nabladelta_{,1}^{[E_1]},\dots,\nabladelta_{,m}^{[E_1]}$, which are only pulled back to $E_1$ and not to $E_0$ (i.e., start with $f_2$ instead of $f_1$ in \eqref{EQU:nabdladeltaj}). On the other hand, $\vec{\mc E}_\Delta$ is the bundle $E_0\times |\Delta^{m}|$ with connection given by the affine span of $\nabladelta_{,0},\dots,\nabladelta_{,m}$, which is pulled back under $id\times\delta_0:M\times |\Delta^{m-1}|\to M\times |\Delta^m|$. Since $\delta_0:|\Delta^{m-1}|\to|\Delta^m|$ maps each $|e_i|$ to $|e_{i+1}|$ for all $i$, we see that $\delta_0^*(\vec{\mc E}_\Delta)$ is the bundle $E_0\times |\Delta^{m-1}|$ with connection given by the affine space of $\nabladelta_{,1},\dots,\nabladelta_{,m}$. The vector bundles and connections of $\delta_0^*(\vec{\mc E}_\Delta)$ and $(\delta_0^*(\vec{\mc E}))_\Delta$ are thus related by pulling back $(\delta_0^*(\vec{\mc E}))_\Delta$ on $E_1\times|\Delta^{m-1}|$ via the map $f_1\times id_{|\Delta^{m-1}|}:\mc E_0\times |\Delta^{m-1}|\to \mc E_1\times|\Delta^{m-1}|$. Since this pullback is an isomorphism of bundles with connection, we obtain the claimed isomorphism.
\end{proof}

Since, by the previous lemma, the assignment  $(\vec{\mc E}\in \VB(M)_\bu)\mapsto (\vec{\mc E}_\Delta \in \BVB(M)_\bu)$ respects the simplicial identities only up to isomorphism, we obtain an actual simplicial set map after composing it with the Chern character.
\begin{definition}\label{DEF:CH-for-VBnabla}
We define a map of simplicial presheaves $\Ch: \VB \to \OMdR$ as follows. Define $\Ch(M)_n:\VB(M)_n\to \OMdR(M)_n$ to be $\Ch(M)_n(\vec{\mc E}):=\BCh(M)_n(\vec{\mc E}_\Delta)$,
\begin{equation}\label{EQU:Ch-for-VB}
\xymatrix{ \VB(M) \ar@{.>}[r]^{\vec{\mc E}\mapsto \vec{\mc E}_\Delta} \ar_{\Ch(M)}@/_1pc/[rr]
& \BVB(M) \ar^{\BCh(M)}[r]  
& \OMdR(M) }
\end{equation}
\end{definition}
The next proposition checks that $\Ch$ is indeed a map of simplicial presheaves.
\begin{proposition}\label{PROP:CH:VBN-to-OM}
The map $\Ch: \VB \to \OMdR$ is a map of simplicial presheaves, i.e., a natural transformation between $\VB$ and $\OMdR$.
\end{proposition}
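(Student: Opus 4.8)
The plan is to verify the three defining properties of a natural transformation of simplicial presheaves --- that each $\Ch(M)_n$ lands in $\OMdR(M)_n$ as a genuine chain map, that the $\Ch(M)_n$ assemble into a simplicial map, and that the whole assignment is natural in $M$ --- by reducing each to the corresponding property for $\BCh$, which is already established in Proposition \ref{PROP:Ch-presheaf-map}, together with the isomorphism $F_\rho$ from Lemma \ref{LEM:VB-to-BVB}. The key observation, already flagged in the text before Definition \ref{DEF:CH-for-VBnabla}, is that although $\vec{\mc E}\mapsto \vec{\mc E}_\Delta$ fails to be simplicial on the nose, this failure is exactly measured by $F_\rho$, and $\BCh$ cannot see it because the trace is invariant under the bundle isomorphism $F_\rho$.

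First I would dispense with the chain-map condition and naturality, since these are essentially automatic. For the chain-map property, $\Ch(M)_n(\vec{\mc E})=\BCh(M)_n(\vec{\mc E}_\Delta)$ is a chain map $N(\Z\Delt{n})\to \OdR^\bu(M)\ul$ simply because $\BCh(M)_n$ produces chain maps for \emph{every} element of $\BVB(M)_n$, and $\vec{\mc E}_\Delta$ is such an element by Definition \ref{DEF:E-to-EDelta}. For naturality in $M$, given $g:M'\to M$ one checks that the assignment $\vec{\mc E}\mapsto \vec{\mc E}_\Delta$ commutes with pullback along $g$ (pulling back a bundle and affinely spanning transferred connections commutes with pulling back along $g\times id$, since $g$ does not touch the simplex direction); combined with the naturality of $\BCh$ expressed by diagram \eqref{EQU:ChDel(M)-is-natural-transformation}, the square for $\Ch$ commutes.

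The heart of the proof is the simplicial identity, i.e. commutativity of the square
\begin{equation*}
\xymatrix{ \VB(M)_m \ar^{\Ch(M)_m\hspace{.55in}}[rr] \ar_{\VB(M)_\rho}[d] && {\Chain^-}(N(\Z\Delt{m}),\OdR^\bu(M)\ul) \ar^{\kappa\mapsto \kappa\circ \rho_*}[d] \\ \VB(M)_n \ar^{\Ch(M)_n\hspace{.55in}}[rr]&& {\Chain^-}(N(\Z\Delt{n}),\OdR^\bu(M)\ul) }
\end{equation*}
for each $\rho:[n]\to[m]$. Going around one way yields $\BCh(M)_n\big((\rho^*\vec{\mc E})_\Delta\big)$, while the other way yields $\BCh(M)_m(\vec{\mc E}_\Delta)\circ \rho_*$, which by the already-proven simplicial property of $\BCh$ (diagram \eqref{DIAGRAM:CH(M)-is-simplicial-map}) equals $\BCh(M)_n\big(\rho^*(\vec{\mc E}_\Delta)\big)$. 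Thus the two sides differ precisely by applying $\BCh(M)_n$ to the two bundles $(\rho^*\vec{\mc E})_\Delta$ and $\rho^*(\vec{\mc E}_\Delta)$, which by Lemma \ref{LEM:VB-to-BVB} are related by the connection-preserving isomorphism $F_\rho$. It therefore suffices to argue that $\BCh(M)_n$ is invariant under such isomorphisms: if $F:(E,\nabla)\to(E',\nabla')$ is an isomorphism over $M\times|\Delta^n|$ with $F\nabla=\nabla'F$, then the curvatures satisfy $R'=F R F^{-1}$, so $\tr((R')^k)=\tr(R^k)$ and hence $\Chu$, and its fiber integrals over every subcell $|e_{i_0,\dots,i_\ell}|$, agree. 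This is the step I expect to be the main (and only genuine) obstacle, and it is conceptually the whole point: the trace launders away the discrepancy that Lemma \ref{LEM:VB-to-BVB} isolates, converting the merely lax assignment $\vec{\mc E}\mapsto\vec{\mc E}_\Delta$ into an honest simplicial map once post-composed with $\BCh$. I would reduce $\rho$ to the generating faces and degeneracies (as in the proof of Lemma \ref{LEM:VB-to-BVB}) only if a direct argument with general $\rho$ proves awkward; but since $F_\rho$ is supplied for arbitrary $\rho$ in the lemma's statement, the trace-invariance argument applies uniformly and no case analysis should be needed.
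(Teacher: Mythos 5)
Your proposal is correct and follows essentially the same route as the paper's proof: reduce everything to Proposition \ref{PROP:Ch-presheaf-map}, handle the simplicial identity by invoking the connection-preserving isomorphism of Lemma \ref{LEM:VB-to-BVB} together with the conjugation-invariance of $\tr(R^k)$, and check naturality by verifying that $\vec{\mc E}\mapsto\vec{\mc E}_\Delta$ commutes with pullback along $g\times id_{|\Delta^n|}$. Your explicit spelling-out of $R'=FRF^{-1}$ is the detail the paper leaves implicit in the phrase ``their trace of powers of curvatures are equal.''
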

\begin{proof}
Since, by proposition \ref{PROP:Ch-presheaf-map}, $\BCh(M)_n(\mc E)\in \OMdR(M)_n$ is well defined for any $\mc E$, we only need to check that $\Ch(M)$ is a map of simplicial sets, and  that $\Ch$ is a natural transformation.

For the first, let $\rho:[n]\to [m]$ be a morphism in $\Del$, and $\vec{\mc E}\in \VB(M)_m$, we need to check that $\Ch(M)_n(\rho^*(\vec{\mc E}))= \BCh(M)_n((\rho^*(\vec{\mc E}))_\Delta)$ and  $\rho^*(\Ch(M)_n(\vec{\mc E}))=\rho^*(\BCh(M)_n(\vec{\mc E}_\Delta))=\BCh(M)_n(\rho^*(\vec{\mc E}_\Delta))$ are equal; cf. the corresponding diagram \eqref{DIAGRAM:CH(M)-is-simplicial-map}. By lemma \ref{LEM:VB-to-BVB} the bundles $(\rho^*(\vec{\mc E}))_\Delta$ and $\rho^*(\vec{\mc E}_\Delta)$ are isomorphic as bundles with connections, so that their trace of powers of curvatures are equal, and thus, by \eqref{EQU:CH(M)_n(E)(e)}, applying $\BCh(M)_n$ yields the output in $\OMdR(M)_n$.

To see that $\Ch(M)$ is a natural transformation, note that for a smooth map of manifolds $g:M'\to M$, and $\vec {\mc E}=(\mc E_0\stackrel {f_1} \to \dots \stackrel {f_n}\to \mc E_n)\in \VB(M)_n$, we have $\VB(g)_n(\vec {\mc E})=(\mc E'_0\stackrel {f'_1} \to \dots \stackrel {f'_n}\to \mc E'_n)$ where $\mc E'_j=(E'_j,\nabla'_j)=(g^*(E_j),g^*(\nabla_j))$ and $f'_j=g^*(f_j)$ are the pullbacks under $g$,
\begin{equation*}
\xymatrix{ 
&& E_{j-1} \ar[rr]^{f_j}\ar[rd] && E_j\ar[ld]\\
E'_{j-1} \ar[rr]^{f'_j=g^*(f_j)}\ar[rru]\ar[rd] && E'_j\ar[rru]\ar[ld] &M& \\
& M'\ar[rru]_{g} &&&}
\end{equation*}
so that $(\VB(g)_n(\vec {\mc E}))_\Delta$ is the bundle $g^*(E_0)\times |\Delta^n|\to M'\times |\Delta^n|$ with a connection that is the affine span of the transferred connections ${\nabla'_{\hspace{-1mm}\Delta}}_{,j}:=(f'_1)^{-1}\circ \dots \circ (f'_j)^{-1}\circ \nabla'_j\circ f'_j \circ \dots \circ f'_1=g^*(f_1^{-1}\circ \dots \circ f_j^{-1}\circ \nabla_j\circ f_j \circ \dots \circ f_1)=g^*(\nabladelta_{,j})$ on the $j$th vertex. On the other hand, taking first $\vec{\mc E}_\Delta$ gives the vector bundle $E_0\times |\Delta^n|\to M\times |\Delta^n|$ with connection the affine span of $\nabladelta_{,j}=f_1^{-1}\circ \dots \circ f_j^{-1}\circ \nabla_j\circ f_j \circ \dots \circ f_1$ on the $j$th vertex. Now $g$ acts on this via pullback $g\times id_{|\Delta^n|}:M'\times id_{|\Delta^n|}\to M\times id_{|\Delta^n|}$ to give the bundle $(g\times id_{|\Delta^n|})^*(E_0\times |\Delta^n|)=g^*(E_0)\times |\Delta^n|$ over $M'\times |\Delta^n|$ with the connection $(g\times id_{|\Delta^n|})^*($affine span of $\nabladelta_{,j}$s$)$, which equals the affine span of the connections $g^*(\nabladelta_{,j})$. This shows that the following diagram commutes:
\[
\xymatrix{ \VB(M) \ar^{\VB(g)}[rr] \ar_{\vec{\mc E}\mapsto \vec{\mc E}_\Delta}[d] && \VB(M') \ar^{\vec{\mc E}\mapsto \vec{\mc E}_\Delta}[d]   \\ 
\BVB(M) \ar^{\BVB(g)}[rr]&& \BVB(M') }
\]
Composing this diagram with \eqref{EQU:ChDel(M)-is-natural-transformation} shows that $\Ch=\BCh\circ (-)_\Delta$ is a natural transformation.

This completes the proof of the proposition.
\end{proof}

\begin{lemma}\label{LEM:degrees-of-Chern-from-local}
For a fixed $n$-simplex $\vec{\mc E}= (\mc E_0\stackrel {f_1}\longrightarrow \dots \stackrel {f_n} \longrightarrow \mc E_n)\in \VB(M)_n$, the Chern character maps this to $\BCh(M)_n(\vec{\mc E}_\Delta)(e_{i_0,\dots, i_\ell})$ from \eqref{EQU:CH(M)_n(E)(e)} with de Rham degrees $\geq \ell$:
\[
\scalebox{0.7}{
\begin{tikzpicture}
\begin{axis}[axis lines = center, grid=both, xmin=-2, xmax=6, ymin=-2, ymax=6, xtick={-2,...,6},  ytick={-2,...,6}, xticklabels={}, yticklabels={}];
\draw [fill, black] (0,0) circle [radius=.1]; 
\draw [fill, black] (0,2) circle [radius=.1]; 
\draw [fill, black] (0,4) circle [radius=.1]; 
\draw [fill, black] (0,6) circle [radius=.1]; 
\draw [fill, black] (1,1) circle [radius=.1]; 
\draw [fill, black] (1,3) circle [radius=.1]; 
\draw [fill, black] (1,5) circle [radius=.1]; 
\draw [fill, black] (2,2) circle [radius=.1]; 
\draw [fill, black] (2,4) circle [radius=.1]; 
\draw [fill, black] (2,6) circle [radius=.1]; 
\draw [fill, black] (3,3) circle [radius=.1]; 
\draw [fill, black] (3,5) circle [radius=.1]; 
\draw [fill, black] (4,4) circle [radius=.1]; 
\draw [fill, black] (4,6) circle [radius=.1]; 
\draw [fill, black] (5,5) circle [radius=.1]; 
\draw [fill, black] (6,6) circle [radius=.1]; 
\draw [dashed, thick] (0,0)--(6,6);
\node at (5.7,-0.5) {$\ell$};  \node at (-1,5.5) {\textnormal{de Rham}};
\end{axis}
\end{tikzpicture}}
\]
\end{lemma}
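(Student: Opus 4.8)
The plan is to track the bidegree of the curvature of the affinely-spanned connection $\nabladelta$ under the decomposition of forms on $M\times|\Delta^n|$ into their $M$-directions and their simplex-directions, and to exploit the fact that this curvature has no component lying purely in the simplex-directions. First I would rewrite the connection \eqref{EQU:nabdladelta} as $\nabladelta = d_{|\Delta^n|}+d_M+A(t)$, where $d_M$ is the de Rham differential on $M$, each transferred connection $\nabladelta_{,j}$ from \eqref{EQU:nabdladeltaj} is written as $d_M+A_j$ with $A_j\in\Om^1(M,\mathrm{End}(E_0))$, and
\[
A(t)=A_n+\sum_{i=1}^n t_i\,(A_{i-1}-A_i)\in\Om^1(M,\mathrm{End}(E_0))
\]
is the affine interpolation of the $A_j$. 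The key structural feature is that $A(t)$ contains no $dt_i$: it is a one-form in the $M$-directions whose coefficients merely depend on the simplex coordinates. Since the restriction of $\nabladelta$ to a subcell $|e_{i_0,\dots,i_\ell}|\cong|\Delta^\ell|$ is again an affine span of one-forms pulled back from $M$ over an $\ell$-simplex, it has the same shape, and it suffices to analyze the bidegree on a single cell.

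Next I would compute the curvature $R=\nabladelta^2=dA+A\wedge A$ with $d=d_M+d_{|\Delta^n|}$ and sort it by bidegree $(p,q)$, where $p$ counts $M$-directions and $q$ counts simplex-directions. Because $A(t)$ has no $dt_i$, both $d_MA$ and $A\wedge A$ are of bidegree $(2,0)$, while $d_{|\Delta^n|}A=\sum_i dt_i\wedge\partial_{t_i}A$ is of bidegree $(1,1)$; crucially, there is \emph{no} $(0,2)$ component. This is precisely the feature that distinguishes $\vec{\mc E}_\Delta$ from a general bundle on $M\times|\Delta^n|$, for which a $(0,2)$ curvature term produces the lower de Rham degrees recorded after \eqref{EQU:CH(M)_n(E)(e)}.

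Then I would expand $\tr(R^k)$: a monomial assembled from $a$ factors of the $(2,0)$-part and $b=k-a$ factors of the $(1,1)$-part has bidegree $(2a+b,\,b)$. The fiber integration $\int_{|e_{i_0,\dots,i_\ell}|}$ over the $\ell$-dimensional cell in \eqref{EQU:CH(M)_n(E)(e)} annihilates every summand except the one of top simplex-degree $q=\ell$, forcing $b=\ell$, hence $a=k-\ell\geq 0$ and $M$-degree $2a+\ell=2k-\ell$. After integrating out the $\ell$ simplex directions the surviving de Rham form on $M$ therefore has degree $2k-\ell\geq\ell$, with equality exactly when $k=\ell$; letting $k$ range over $\ell,\ell+1,\ell+2,\dots$ gives precisely the degrees $\ell,\ell+2,\ell+4,\dots$ marked in the figure, which proves the claim.

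The only real obstacle is establishing the absence of the $(0,2)$ curvature component; the remainder is bookkeeping of bidegrees together with the fiber-integration degree shift. That absence is an immediate consequence of the affine-span construction \eqref{EQU:nabdladelta}, in which interpolation happens solely through one-forms pulled back from $M$, so I would arrange the curvature computation to make this structural fact manifest before reading off degrees.
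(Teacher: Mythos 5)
Your proposal is correct and is essentially the paper's argument: the paper defers the proof to remark \ref{RMK:expand-CH-as-MC-forms}, where the explicit curvature formula \eqref{EQU:R(ExDeltan)} shows that each $dt_j$ appears only in the combination $dt_j\,\theta_j$ (and there is no $dt_i\,dt_j$ term), so that saturating the $\ell$-dimensional fiber integral forces exactly $\ell$ factors of $M$-degree one and $p-\ell\geq 0$ factors of $M$-degree two, giving de Rham degree $2p-\ell\geq\ell$. Your version isolates the same mechanism more structurally (no $(0,2)$ component of $R_\Delta$ because the interpolating form $A(t)$ contains no $dt_i$) without needing the full formula of proposition \ref{PROP:R-from-MC-forms}, which is a perfectly valid and slightly leaner packaging of the identical degree count.
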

\begin{proof}
We will see this in remark \ref{RMK:expand-CH-as-MC-forms} by using local coordinates in the expression \eqref{EQU:CH(M)_n(E)(e)}.
\end{proof}

\section{The Chern character from Chern-Weil theory and from Bott-Tu}\label{SEC:CW-vs-BT}

We now apply the functor from the previous section to open sets coming from a chosen cover $\mc U$ of a manifold $M$. We use the totalization to glue these local pieces together to obtain a Chern character functor that can be applied to vector bundles which are given via a local description. We use this to show how formulas from Chern-Weil theory and local formulas provided by Bott-Tu \cite{BT} for computing the Chern character fit into the above picture. Moreover we examine how these two descriptions can be seen to be equivalent in this formalism.

\subsection{Totalizing the Chern character map}\label{SEC:Tot-of-Chern-CW-BT}

We now recall the \v{C}ech nerve of an open cover $\mc U$ and its totalization, and apply it to the functor $\Ch(.):\VB(.)\to \OMdR(.)$ from \eqref{EQU:Ch-for-VB}. The totalization gives a map of simplicial sets \eqref{EQU:Tot(Ch(NU))-as-sSet}, whose input has $0$-simplicies that are roughly vector bundles given via a local description (see proposition \ref{PROP:Tot(VB)01}), and whose output has $0$-simplicies that land in the \v{C}ech-de Rham complex (see proposition \ref{PROP:Tot(Omega)}).

We start by recalling the relevant definitions and facts, which will  closely follow the descriptions given in \cite{GMTZ1}. While the descrtiption in \cite{GMTZ1} used complex manifolds instead of smooth manifolds, the stated facts here will transfer over to the smooth case, and we refer to \cite{GMTZ1} for full details.

\begin{definition}[{\cite[Def. 3.1, Prop. 3.2]{GMTZ1}}]\label{DEF:Cech-Nerve}
Let $\mc U=\{U_i\}_{i\in I}$ be an open cover of a smooth manifold $M\in \Man$. Then, define the \v{C}ech nerve $\NU:\Del\to \Man^{op}$ for $\mc U$ to be given by $\NU:[k]\mapsto \NU_k:=\coprod\limits_{i_0,\dots,i_k\in I} U_{i_0,\dots,i_k}$, where we use the usual shorthand notation $U_{i_0,\dots,i_k}=U_{i_0}\cap\dots\cap U_{i_k}$. Face maps $d_j:\NU_k\to \NU_{k-1}$ are induced by inclusions of open sets $U_{i_0,\dots,i_k}\stackrel{inc}\hookrightarrow U_{i_0,\dots,\widehat{i_j},\dots,\i_k}$, degeneracies $s_j:\NU_k\to \NU_{k+1}$ are induced identity maps $U_{i_0,\dots,i_k}\stackrel{id}\to U_{i_0,\dots,i_j,i_j,\dots,\i_k}$.

Note, that the compositions $\VB(\NU):\Del\stackrel{\NU}\to\Man^{op}\stackrel{\VB}\to\sSet$ and $\OMdR(\NU):\Del\stackrel{\NU}\to\Man^{op}\stackrel{\OMdR}\to\sSet$ are cosimplicial simplicial sets. The functor $\Ch: \VB \to \OMdR$ induces a map $\Ch(\NU): \VB(\NU) \to \OMdR(\NU)$ between cosimplicial simplicial sets.
\end{definition}

Next, we recall the definition of the totalization.

\begin{definition}[{\cite[Def. D.1]{GMTZ1}}]
For a cosimplicial simplicial set  ${\bf X}:\Del\to \sSet$, we define the totalization $\Tot({\bf X})$ to be the simplicial set which is the equalizer of
\begin{equation}\label{EQU:totalization-equalizer}
\prod_{[\ell]\in \Del} ({\bf X}([\ell]))^{\Delta^\ell}\stackrel[\psi]{\phi}{\rightrightarrows} \prod_{\rho\in\Del([n], [m])} ({\bf X}([m]))^{\Delta^{n}}
\end{equation}
where $\Delta^p$ is the simplicial set whose $q$-simplicies are $(\Delta^p)_q=\Del([q],[p])$, and exponentiation is given by the simplicial set $({\bf X}([q]))^{\Delta^p}=Map({\bf X}([q]),\Delta^p)$ (see \cite[Ex. C.1 (6) and (4)]{GMTZ1}).

Applying the totalization to $\VB(\NU)$, $\OMdR(\NU)$, and the Chern character map $\Ch(\NU)$, induces a map of simplicial sets
\begin{equation}\label{EQU:Tot(Ch(NU))-as-sSet}
\Tot(\Ch(\NU)): \Tot(\VB(\NU)) \to \Tot(\OMdR(\NU))
\end{equation}
\end{definition}

We now describe the totalization of the cosimplicial simplicial sets in definiton \ref{DEF:Cech-Nerve} more explicitly.

\begin{proposition}[{\cite[Prop.s 3.4, 3.15]{GMTZ1}}]\label{PROP:Tot(VB)01}
Let $\mc U=\{U_i\}_{i\in I}$ be an open cover on $M$.
\begin{itemize}
\item
A $0$-simplex in $\Tot(\VB(\NU))$, which we deonte by $\mathfrak E$, consists of vector bundles $E_i\to U_i$ for each $i\in I$ and connections $\nabla_i$ on $E_i$, together with bundle isomorphism $g_{i,j}:E_j|_{U_{i,j}}\to E_i|_{U_{i,j}}$ for each $i,j\in I$ (the $g_{i,j}$ do {\bf not} have to preserve the connections) satisfying $g_{i,j}\circ g_{j,k}=g_{i,k}$ on $U_{i,j,k}$ and $g_{i,i}=id$.
\item
A $1$-simplex in $\Tot(\VB(\NU))$ consists of two $0$-simplicies $\mathfrak E^{(0)}$ and $\mathfrak E^{(1)}$ together with bundle isomorphisms $f_i:E_i^{(0)}\to E_i^{(1)}$ over $U_i$ for each $i\in I$ (the $f_i$ do {\bf not} have to preserve the connections) satisfying $f_i\circ g_{i,j}^{(0)}=g_{i,j}^{(1)}\circ f_j$ on $U_{i,j}$.
\end{itemize}
\end{proposition}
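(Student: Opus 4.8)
The plan is to unwind the end/equalizer defining $\Tot$ and to match the resulting low-dimensional data against the nerve description of $\VB$. Recall from \eqref{EQU:totalization-equalizer} that, since $(\mathbf X([\ell]))^{\Delta^\ell}=Map(\Delta^\ell,\mathbf X([\ell]))$, a $q$-simplex of $\Tot(\VB(\NU))$ is a family of simplicial maps $x_\ell:\Delta^\ell\times\Delta^q\to \VB(\NU_\ell)$, one for each $[\ell]\in\Del$, that is natural in $[\ell]$ — the two maps in the equalizer agreeing is exactly compatibility with every coface and codegeneracy of the \v{C}ech nerve. Because $\VB$ is a presheaf and hence sends the disjoint union $\NU_\ell=\coprod_{i_0,\dots,i_\ell}U_{i_0,\dots,i_\ell}$ to a product, $x_\ell$ is equivalently a collection $(x_\ell^{i_0,\dots,i_\ell})$ with one entry in each $\VB(U_{i_0,\dots,i_\ell})$; by Yoneda and the definition of the nerve, each entry is a commuting grid of bundles-with-connection and isomorphisms over $U_{i_0,\dots,i_\ell}$ indexed by $[\ell]\times[q]$ (all identifications read up to canonical isomorphism, consistent with the lax structure of $\VB$).

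First I would treat $q=0$. The level $\ell=0$ datum is, for each $i$, a single object $\mc E_i=(E_i,\nabla_i)$ of $\vbn(U_i)$, supplying the bundles and connections. The two cofaces $[0]\to[1]$ identify the source and target of the edge $x_1^{i,j}\in\VB(U_{i,j})_1$ with the restrictions $\mc E_i|_{U_{i,j}}$ and $\mc E_j|_{U_{i,j}}$; this edge is therefore an isomorphism between them, which I record as $g_{i,j}:E_j|_{U_{i,j}}\to E_i|_{U_{i,j}}$ (its direction fixed by the chosen face convention), with no condition on connections since morphisms of $\vbn$ ignore the connection. The codegeneracy $[1]\to[0]$ forces $x_1^{i,i}$ to be degenerate, giving $g_{i,i}=id$. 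The three cofaces $[1]\to[2]$ identify the three edges of $x_2^{i,j,k}\in\VB(U_{i,j,k})_2$ with $g_{i,j},g_{j,k},g_{i,k}$ on the triple overlap, and since a $2$-simplex of a nerve is precisely a commuting triangle, this yields the cocycle $g_{i,j}\circ g_{j,k}=g_{i,k}$.

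The case $q=1$ is the identical analysis run over the grid $[\ell]\times[1]$. Restricting the $\Delta^1$-factor to its two vertices extracts two $0$-simplices $\mathfrak E^{(0)}$ and $\mathfrak E^{(1)}$ with transition data $g_{i,j}^{(0)}$ and $g_{i,j}^{(1)}$ as above. The $\Delta^1$-direction at level $\ell=0$ produces, for each $i$, a vertical isomorphism $f_i:E_i^{(0)}\to E_i^{(1)}$, and the single commuting square of the grid $[1]\times[1]$ over $U_{i,j}$ is exactly the relation $f_i\circ g_{i,j}^{(0)}=g_{i,j}^{(1)}\circ f_j$.

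The one point needing care — and the main obstacle — is to argue that the listed data is complete, i.e.\ that the simplices $x_\ell$ for $\ell\ge 2$ (and the grid beyond the square when $q=1$) carry no extra information and impose no further conditions. This holds because each $\VB(U)=\Nerve(\vbn(U))$ is the nerve of a groupoid and is therefore $2$-coskeletal: any compatible boundary of a potential higher simplex has a unique filler, so the naturality constraints of the end at levels $\ell\ge 3$ are automatic once the cocycle is satisfied, and the coherent family $(x_\ell)$ is uniquely reconstructed from the stated low-dimensional data. Confirming the precise direction of $g_{i,j}$ and the exact form of the square by unwinding the face and degeneracy conventions is the only genuinely fiddly bookkeeping; everything else is the standard descent-data reading of $\Tot$ of a cosimplicial nerve, transported verbatim from the complex-manifold setting of \cite[Prop.s 3.4, 3.15]{GMTZ1} to the smooth one.
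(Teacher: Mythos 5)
Your unwinding of the totalization equalizer---reading off objects at cosimplicial level $0$, the edges $g_{i,j}$ and the degeneracy condition $g_{i,i}=id$ at level $1$, the cocycle condition from the $2$-simplices of the nerve, and then invoking $2$-coskeletality of nerves of groupoids to conclude that the levels $\ell\ge 2$ carry no further data or conditions---is correct and is precisely the argument the paper relies on, since its own proof is a one-line deferral to \cite[Prop.s 3.4, 3.15]{GMTZ1} with holomorphic bundles and connections replaced by smooth ones. Your $q=1$ analysis via the commuting square $\Delta^1\times\Delta^1$ over $U_{i,j}$ likewise matches the cited argument.
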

\begin{proof}
The proof is just as in \cite[Prop.s 3.4, 3.15]{GMTZ1}, replacing holomorphic bundle maps and holomorphic connections with complex vector bundle maps and connections.
\end{proof}

\begin{proposition}[{\cite[Prop. 3.13]{GMTZ1}}]\label{PROP:Tot(Omega)}
Let $\mc U=\{U_i\}_{i\in I}$ be an open cover on $M$. Then, there is a map of simplicial sets
\begin{equation}
\Tot(\OMdR(\NU))\to \DKSet(\vC^\bu(\mc U,\OdR^\bu)[u]^{\leq 0})
\end{equation}
where $\DKSet$ is the Dold-Kan functor (cf. definition \ref{DEF:OMdR}) and $\vC^\bu(\mc U,\OdR^\bu)$ is the \v{C}ech-de Rham complex. In particular, we have the following:
\begin{itemize}
\item The $0$-simplicies $\DKSet(\vC^\bu(\mc U,\OdR^\bu)[u]^{\leq 0})_0\cong \vC^\bu(\mc U,\OdR^\bu)^{even}$ are given by elements of the \v{C}ech-de Rham complex of even degree.
\item A $1$-simplex in $\DKSet(\vC^\bu(\mc U,\OdR^\bu)[u]^{\leq 0})_1$ is given by two $0$-simplicies as in the previous point, $\alpha^{(0)}, \alpha^{(1)}\in \vC^\bu(\mc U,\OdR^\bu)^{even}$, together with an element $\beta\in \vC^\bu(\mc U,\OdR^\bu)^{odd}$ such that its \v{C}ech-de Rham differential of $\beta$ is the difference $\alpha^{(1)}-\alpha^{(0)}$.
\end{itemize}
\end{proposition}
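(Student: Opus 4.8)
\emph{Overall strategy.} The plan is to follow the argument of \cite[Prop. 3.13]{GMTZ1} closely, the only genuinely new feature being that the internal differential is now the nonzero de Rham differential $d$ rather than the zero map, so that the target total complex carries the full Čech--de Rham differential $\delta\pm d$ in place of the bare Čech differential. I first observe that the two ``in particular'' bullets concern \emph{only} the target $\DKSet(\vC^\bu(\mc U,\OdR^\bu)[u]^{\leq 0})$ and are immediate from the definition of $\DKSet$ in Definition \ref{DEF:OMdR}. Indeed, for a non-positively graded complex $D$ a $0$-simplex of $\DKSet(D)$ is just an element of $D^0$; since $\|u\|=-2$, the total-degree-zero part of $\vC^\bu(\mc U,\OdR^\bu)[u]$ consists of sums $\sum_k u^k c_k$ with $c_k$ of Čech--de Rham degree $2k$, which recovers $\vC^\bu(\mc U,\OdR^\bu)^{even}$, and likewise $D^{-1}$ recovers $\vC^\bu(\mc U,\OdR^\bu)^{odd}$. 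A $1$-simplex is a chain map out of $N(\Z\Delt{1})$, i.e. a triple $(\alpha^{(0)},\alpha^{(1)},\beta)$ with $\alpha^{(i)}\in D^0$, $\beta\in D^{-1}$, and $d_D(\beta)=\alpha^{(1)}-\alpha^{(0)}$ where $d_D=\delta\pm d$. Thus the whole content is the construction of the comparison map.

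\emph{Constructing the map.} To build the map I use the mapping-space description of $\DKSet$. For $C\in\Chain^-$ and a simplicial set $K$, the Dold--Kan and free--forgetful equivalences give $\mathrm{Map}(K,\DKSet(C))_q=\Chain^-(N(\Z(K\times\Delt{q})),C)$, and precomposition with the Eilenberg--Zilber (shuffle) map $N(\Z K)\ot N(\Z\Delt{q})\to N(\Z(K\times\Delt{q}))$ yields a natural map of simplicial sets
\[
\mathrm{Map}(K,\DKSet(C))\longrightarrow \DKSet\big(\underline{\Hom}(N(\Z K),C)\big).
\]
Taking $K=\Delt{\ell}$ and $C=\OdR^\bu(\NU_\ell)[u]^{\leq 0}$ at each cosimplicial level of $\OMdR(\NU)$, these maps are compatible with the cosimplicial and simplicial structure maps, hence induce a map on the equalizers \eqref{EQU:totalization-equalizer}; since $\DKSet$ is a composite of right adjoints and so preserves limits, the target equalizer is $\DKSet$ of the chain-level end, giving
\[
\Tot(\OMdR(\NU))\longrightarrow \DKSet\Big(\textstyle\int_{[\ell]\in\Del}\underline{\Hom}\big(N(\Z\Delt{\ell}),\OdR^\bu(\NU_\ell)[u]^{\leq 0}\big)\Big).
\]

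\emph{Identifying the end.} It remains to identify the chain-level end $E:=\int_{[\ell]}\underline{\Hom}(N(\Z\Delt{\ell}),\OdR^\bu(\NU_\ell)[u]^{\leq 0})$ with a complex mapping to $\vC^\bu(\mc U,\OdR^\bu)[u]^{\leq 0}$. This is the additive analogue of the Bousfield--Kan totalization formula: the end $\int_{[\ell]}\underline{\Hom}(N(\Z\Delt{\ell}),-)$ computes the total complex of the cosimplicial chain complex $[\ell]\mapsto\OdR^\bu(\NU_\ell)$, in which the conormalization in the cosimplicial direction is exactly the Čech complex — the alternating sum of the cofaces induced by the inclusions $U_{i_0,\dots,i_\ell}\hookrightarrow U_{i_0,\dots,\widehat{i_j},\dots,i_\ell}$ being the Čech differential $\delta$ — while the internal differential is de Rham $d$. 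Concretely, the map reads off, for a compatible family in $\Tot$, its value on the top cell $e_{0,\dots,\ell}$ of each $\Delt{\ell}$, which lands in $\OdR^\bu(\NU_\ell)$ in Čech degree $\ell$, and assembles these over $\ell$. Applying $\DKSet$ to the resulting map $E\to\vC^\bu(\mc U,\OdR^\bu)[u]^{\leq 0}$ produces the asserted map of simplicial sets.

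\emph{Main obstacles.} I expect two subtleties. First is the sign bookkeeping in the total differential $\delta\pm d$: this is precisely the new ingredient relative to \cite{GMTZ1}, and one must check that the conormalization of the cosimplicial structure and the internal de Rham differential assemble, with the signs dictated by the shuffle map, into the genuine Čech--de Rham differential — this is also what guarantees the chain-map condition in the $1$-simplex description and is consistent with the boundary computation \eqref{EQU:CHUE-chain-map-proof}. Second is the interaction of the $(-)^{\leq 0}$-truncation with the end: truncating the internal complex before forming the end need not commute with forming the total complex and then truncating, so one obtains only a map of complexes $E\to\vC^\bu(\mc U,\OdR^\bu)[u]^{\leq 0}$ rather than an isomorphism, which is exactly why the proposition asserts only a map of simplicial sets. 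Everything else transfers verbatim from the holomorphic case of \cite[Prop. 3.13]{GMTZ1}.
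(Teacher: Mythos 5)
Your proposal is correct and follows essentially the same route as the paper's proof: the paper also obtains the map by interchanging $\Tot$ with $\DKSet$ (citing lemmas 3.10--3.12 of \cite{GMTZ1}, which your Eilenberg--Zilber/limit-preservation argument reconstructs), identifies the chain-level totalization with $\vC^\bu(\mc U,\OdR^\bu)\ul$, notes that the only new feature is the non-vanishing internal differential $d$, and reads off the two bullet points directly from the definition of $\DKSet$. The only cosmetic difference is where the non-isomorphism is located --- the paper places it at the interchange $\Tot\circ\DK\to\DK\circ\Tot$ and treats $\Tot\circ\quot\cong\quot\circ\tot$ as an isomorphism, while you attribute it to the truncation --- but since only a map of simplicial sets is asserted, this does not affect the argument.
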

\begin{proof}
Just as in \cite[Prop. 3.13]{GMTZ1}, $\Tot(\OMdR(\CN \mc U))=\Tot(\DKSet (\OdR^\bu(\CN \mc U)\ul))$, and we have maps of simplicial sets 
\begin{multline*}
\Tot(\DKSet( \OdR^\bu(\CN \mc U)\ul))
\cong \Tot(\mc F(\DK( \quot(\OdR^{\bu,\bu}))))
\cong \mc F(\Tot(\DK( \quot(\OdR^{\bu,\bu}))))
\\
\to \mc F(\DK(\Tot (\quot(\OdR^{\bu,\bu}))))
\cong \DKSet(\quot(\tot (\OdR^{\bu,\bu})))
\cong\DKSet(\vC^\bu(\mc U, \OdR^\bu)\ul)
\end{multline*}
Here, it is worth noting that the lemmas 3.10, 3.11, and 3.12 in \cite{GMTZ1} all allow for the case where $\OdR^\bu$ has a non-vanishing differential, whereas the differential in $\Om^\bu_{hol}$ in \cite{GMTZ1} was taken to be zero. Note that, by \cite[Def. 3.8]{GMTZ1}, $\tot (\OdR^{\bu,\bu})$ in total degree $p$ is given by $\tot (\OdR^{\bu,\bu})^p \cong\bigoplus\limits_{n}\prod\limits_{i_0,\dots,i_n}\OdR^{p-n}(U_{i_0,\dots,i_n})\cong \vC^\bu(\mc U, \OdR^\bu)^p$.

For the last statements, note that the $0$-simplicies of $\DKSet(\vC^\bu(\mc U,\OdR^\bu[u])^{\leq 0})$ are given by $(\vC^\bu(\mc U,\OdR^\bu)[u])^0\stackrel \cong\to  C^\bu(\mc U,\OdR^\bu)^{even},\sum_{p\geq 0}\omega^{2p}\cdot u^p \mapsto \sum_{p\geq 0}\omega^{2p}$, where $\omega^q$ means an element of degree $q$ in $\vC^\bu(\mc U,\OdR^\bu)^q$. The statement for the $1$-simplicies comes from the description of the Dold-Kan functor in definition \ref{DEF:OMdR}. Note that we also have the identification $(\vC^\bu(\mc U,\OdR^\bu)[u])^{-1}\stackrel \cong\to  C^\bu(\mc U,\OdR^\bu)^{odd},\sum_{p\geq 1}\omega^{2p-1}\cdot u^p \mapsto \sum_{p\geq 1}\omega^{2p-1}$.
\end{proof}

\begin{proposition}\label{PROP:Chern-for-choice-of-cover}
Let $\mc U=\{U_i\}_{i\in I}$ be an open cover of a manifold $M$. For $i\in I$, let $E_i\to U_i$ be vector bundles, and for $i,j\in I$ let $g_{i,j}:E_j|_{U_{i,j}}\to E_i|_{U_{i,j}}$ be bundle isomorphisms satisfying $g_{i,j}\circ g_{j,k}=g_{i,k}$ on $U_{i,j,k}$ and $g_{i,i}=id$.
\begin{itemize}
\item
A choice of connections $\nabla_i$ on $E_i$, $\forall i\in I$, gives a $0$-simplex in $\Tot(\VB(\NU))$. Using the totalization of the Chern character map $\Tot(\Ch(\NU)): \Tot(\VB(\NU)) \to \Tot(\OMdR(\NU))$, is mapped to the (even) \v{C}ech-de Rham forms $\{\alpha_{i_0,\dots,i_n}\in \OdR^\bu(U_{i_0,\dots, i_n})\}$ given by
\begin{equation}\label{EQU:Tot-0-Cech-deRham}
\alpha_{i_0,\dots,i_n}=\sum_{k\geq 0} \frac{1}{k!\cdot (2\pi i)^k}\int_{|\Delta^n|}\tr\big(R^k\big)
\end{equation}
Here, for a fixed choice of indicies $i_0,\dots, i_n$, $R$ denotes the curvature of the connection \eqref{EQU:nabdladelta} on $U_{i_0,\dots, i_n}\times \Delta^n$ given by the sequence of vector bundles $\vec{\mc E}= (\mc E_{i_0}\stackrel {g_{i_1,i_0}}\longrightarrow \dots \stackrel {g_{i_n,i_{n-1}}} \longrightarrow \mc E_{i_n})$ where $\mc E_{p}=(E_{p|_{U_{i_0,\dots,i_n}}}\to U_{i_0,\dots,i_n},\nabla_p|_{U_{i_0,\dots,i_n}})$.
\item
For two choices of connections $\nabla_i^{(0)}$ and $\nabla_i^{(1)}$ on $E_i$, $\forall i\in I$, the identity map $f_i:E_i\to E_i$ gives a $1$-simplex in $\Tot(\VB(\NU))$. Therefore, the difference of the two \v{C}ech-de Rham forms $\{\alpha^{(0)}_{i_0,\dots,i_n}\}$ and $\{\alpha^{(1)}_{i_0,\dots,i_n}\}$ from the previous point are exact.
\end{itemize}
\end{proposition}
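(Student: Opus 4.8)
The plan is to deduce both statements from the explicit descriptions of the two totalizations in Propositions \ref{PROP:Tot(VB)01} and \ref{PROP:Tot(Omega)}, combined with the cell-by-cell formula \eqref{EQU:CH(M)_n(E)(e)} for $\BCh$. For the first bullet, Proposition \ref{PROP:Tot(VB)01} already identifies the given data---bundles $E_i\to U_i$, connections $\nabla_i$, and transitions $g_{i,j}$ satisfying the cocycle conditions---with a $0$-simplex $\mathfrak{E}\in\Tot(\VB(\NU))_0$. The first step is then to unwind the equalizer \eqref{EQU:totalization-equalizer}: a $0$-simplex of $\Tot(\VB(\NU))$ is a compatible family of simplicial maps $\Delta^n\to\VB(\NU_n)$, i.e.\ an $n$-simplex of $\VB(\NU_n)=\prod_{i_0,\dots,i_n}\VB(U_{i_0,\dots,i_n})$ for each $n$; over the multi-index $(i_0,\dots,i_n)$ this $n$-simplex is precisely the sequence $\vec{\mc E}=(\mc E_{i_0}\xrightarrow{g_{i_1,i_0}}\dots\xrightarrow{g_{i_n,i_{n-1}}}\mc E_{i_n})$ of the statement. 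Applying the Chern map component-wise, Definition \ref{DEF:CH-for-VBnabla} gives $\Ch(U_{i_0,\dots,i_n})_n(\vec{\mc E})=\BCh(U_{i_0,\dots,i_n})_n(\vec{\mc E}_\Delta)$, where $\vec{\mc E}_\Delta$ carries the affine-span connection \eqref{EQU:nabdladelta} whose curvature is the $R$ appearing in the statement.

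The crux is to identify the resulting $0$-simplex of $\Tot(\OMdR(\NU))$ with the claimed \v{C}ech-de Rham form under the map of Proposition \ref{PROP:Tot(Omega)}. I would trace through the chain of (iso)morphisms recorded there, whose net effect on the \v{C}ech degree $n$ summand of $\tot(\OdR^{\bu,\bu})$ is to read off the value of $\BCh(U_{i_0,\dots,i_n})_n(\vec{\mc E}_\Delta)$ on the top cell $e_{0,\dots,n}$. By \eqref{EQU:CH(M)_n(E)(e)} this value equals $\int_{|\Delta^n|}\Chu(\vec{\mc E}_\Delta)=\sum_{k\geq 0}\frac{u^k}{k!\,(2\pi i)^k}\int_{|\Delta^n|}\tr(R^k)$, and setting $u=1$ under the identification $(\vC^\bu(\mc U,\OdR^\bu)[u])^0\cong\vC^\bu(\mc U,\OdR^\bu)^{even}$ of Proposition \ref{PROP:Tot(Omega)} produces exactly \eqref{EQU:Tot-0-Cech-deRham}. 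I expect this to be the main obstacle: keeping careful track of how the equalizer condition and the Dold--Kan identifications conspire so that the \v{C}ech degree $n$ component is controlled by the single top-dimensional cell---the lower cells $e_{i_0,\dots,i_\ell}$ with $\ell<n$ feeding the lower \v{C}ech degrees through the compatibility conditions---together with matching normalizations and signs once $u$ is specialized.

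For the second bullet, I would again invoke Proposition \ref{PROP:Tot(VB)01}. Since the two $0$-simplices share the same bundles $E_i$ and the same transitions $g_{i,j}$, differing only in the connections $\nabla_i^{(0)},\nabla_i^{(1)}$, the identity maps $f_i=id\colon E_i\to E_i$ trivially satisfy the compatibility $f_i\circ g_{i,j}^{(0)}=g_{i,j}^{(1)}\circ f_j$ and hence assemble into a $1$-simplex of $\Tot(\VB(\NU))$ with faces $\mathfrak{E}^{(0)}$ and $\mathfrak{E}^{(1)}$. Applying $\Tot(\Ch(\NU))$ yields a $1$-simplex of $\Tot(\OMdR(\NU))$, which by Proposition \ref{PROP:Tot(Omega)} consists of the two $0$-simplices $\{\alpha^{(0)}_{i_0,\dots,i_n}\}$ and $\{\alpha^{(1)}_{i_0,\dots,i_n}\}$ of the first bullet together with an odd element $\beta\in\vC^\bu(\mc U,\OdR^\bu)^{odd}$ whose \v{C}ech-de Rham differential equals $\alpha^{(1)}-\alpha^{(0)}$. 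This exhibits the difference as a coboundary, establishing exactness; concretely, $\beta$ is the Chern--Simons-type transgression obtained by integrating $\Chu$ over the simplex direction that interpolates the two families of connections.
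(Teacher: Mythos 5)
Your proposal is correct and follows essentially the same route as the paper: unwind the equalizer description of $\Tot(\VB(\NU))$ into the sequences $\vec{\mc E}$ over each $U_{i_0,\dots,i_n}$, pass to $\vec{\mc E}_\Delta$ via definition \ref{DEF:E-to-EDelta}, read off \eqref{EQU:CH(M)_n(E)(e)} on the top cell, and strip the powers of $u$ via the identification in proposition \ref{PROP:Tot(Omega)}; the second bullet likewise follows from the second bullets of propositions \ref{PROP:Tot(VB)01} and \ref{PROP:Tot(Omega)}. Your version is somewhat more explicit than the paper's about how the cosimplicial compatibilities confine the \v{C}ech degree $n$ component to the top-dimensional cell, but this is elaboration rather than a different argument.
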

\begin{proof}
The totalization in terms of the left hand-side of \eqref{EQU:totalization-equalizer} is given by sequences of bundles (cf. \cite[Prop. 3.4]{GMTZ1}):
\[
\xymatrix{
{ \begin{matrix} \text{on }U_i: \\ E_i \end{matrix}} & 
{ \begin{matrix} \text{on }U_{i,j}: \\ E_i\to E_j \end{matrix}} & 
{ \begin{matrix} \text{on }U_{i,j,k}: \\ E_i\to E_j\to E_k \end{matrix}} &
{ \begin{matrix} \dots \\ \dots\end{matrix}} 
}
\]
Writing these sequences of bundles on $U_{i_0,\dots, i_n}$ as a bundle on $U_{i_0,\dots, i_n}\times \Delta^n$ as in definition \ref{DEF:E-to-EDelta}, the Chern character assigns the forms from \eqref{EQU:CH(M)_n(E)(e)} to this bundle, which is precisely \eqref{EQU:Tot-0-Cech-deRham}. Here, we removed the powers of $u$ as we did in the identification of $0$-simplicies with even \v{C}ech-de Rham forms in proposition \ref{PROP:Tot(Omega)}. (Note that $\int_{\Delta^n}\tr(R^k)$ has a total degree of (\v{C}ech degree)+(de Rham degree) $=n+(2k-n)=2k$.)

The second bullet point follows from the second bullet points in propositions \ref{PROP:Tot(VB)01} and \ref{PROP:Tot(Omega)}.
\end{proof}

The following two examples apply the above proposition to two particular choices of connections on bundles $E_i\to U_i$.

\begin{example}[Chern-Weil formula]\label{EXA:Chern-Weil-example}
Let $E\to M$ be a vector bundle of rank $d$ over $M$. Let $\mc U=\{U_i\}_{i\in I}$ be a good open cover of $M$ so that we can trivialize $E$ over $\mc U$, i.e., we have trivializations $s_i:E_i:=\C^d\times U_i \stackrel\cong\to E|_{U_i}$. Denote by $g_{i,j}:=(s_i^{-1}|_{U_{i,j}})\circ (s_j|_{U_{i,j}}):E_j|_{U_{i,j}}\to E|_{U_{i,j}}\to E_i|_{U_{i,j}}$.

Let $\nabla$ be a (global) connection on $E$ (which always exists using a partition of unity construction), and denote by $\nabla_i=s_i^{-1} \circ \nabla|_{U_i}\circ s_i$ the connection transferred to $E_i$, so that $\mc E_i=(E_i,\nabla_i)$. Now, fix indices $i_0,\dots, i_n$. We get a sequence of vector bundles $\vec{\mc E}= (\mc E_{i_0}\stackrel {g_{i_1,i_0}}\longrightarrow \dots \stackrel {g_{i_n,i_{n-1}}} \longrightarrow \mc E_{i_n})$ on $U_{i_0,\dots, i_n}$ as in definition \ref{DEF:E-to-EDelta}. Since all connections come from the global connection $\nabla$, trasnfering the connection as in \eqref{EQU:nabdladelta} gives $\nabla_{\Delta,j}-\nabla_{\Delta,j-1}=0$, and so $\nabla_\Delta=d_{|\Delta^n|}+\nabla_{\Delta,n}=d_{|\Delta^n|}+\nabla|_{U_{i_0,\dots,i_n}}$ is the connection on $E_\Delta=E_0\times |\Delta^n|$ in \eqref{EQU:nabdladelta}. In particular, we get that the curvature is $R=(\nabla|_{U_{i_0,\dots,i_n}})^2$ is the curvature coming from $\nabla$, which is independent of the parameters $t_i$ in the simplex $|\Delta^n|$. It follows that the integral in \eqref{EQU:Tot-0-Cech-deRham} vanishes for $n>0$, and our Chern character forms are $\alpha_{i_0,\dots,i_n}=0$ for $n>0$, while $\alpha_{i}=\sum_{k\geq 0}\frac{1}{k!(2\pi i)^k} \tr(R^k)$ is the usual Chern character from Chern-Weil theory. Thus, the output \eqref{EQU:Tot-0-Cech-deRham} is concentrated in \v{C}ech degree $0$.
\[
\scalebox{0.7}{
\begin{tikzpicture}
\begin{axis}[axis lines = center, grid=both, xmin=-2, xmax=6, ymin=-2, ymax=6, xtick={-2,...,6},  ytick={-2,...,6}, xticklabels={}, yticklabels={}];
\draw [fill, black] (0,0) circle [radius=.1]; 
\draw [fill, black] (0,2) circle [radius=.1]; 
\draw [fill, black] (0,4) circle [radius=.1]; 
\draw [fill, black] (0,6) circle [radius=.1]; 
\draw [dashed, thick] (0,0)--(6,6);
\node at (5.3,-0.5) {\v{C}ech};  \node at (-1,5.5) {de Rham};
\end{axis}
\end{tikzpicture}}
\]
Note, that the forms $\al_i$ are \v{C}ech-closed and therefore glue to global forms on $M$, which is given by the global Chern character forms for $(E,\nabla)$.
\end{example}

\begin{example}[Bott-Tu formula]\label{EXA:Bott-Tu-example}
Let $E\to M$ be a vector bundle of rank $d$ over $M$. Let $\mc U=\{U_i\}_{i\in I}$ be a good open cover of $M$ so that we can trivialize $E$ over $\mc U$, i.e., we have trivializations $s_i:E_i:=\C^d\times U_i \stackrel\cong\to E|_{U_i}$. Denote by $g_{i,j}:=(s_i^{-1}|_{U_{i,j}})\circ (s_j|_{U_{i,j}}):E_j|_{U_{i,j}}\to E|_{U_{i,j}}\to E_i|_{U_{i,j}}$.

On each $E_i=\C^d\times U_i $, we choose the flat connection $\nabla_i:=d$ given by the de Rham differential. We claim that in this case the connection is the one given by Bott and Tu \cite[p. 305]{BT}. In more detail,  in contrast to \eqref{EQU:standard-n-simplex} we parametrize $|\Delta^n| $ as $|\Delta^n| = \{(\widetilde{t_0},\dots,\widetilde{t_n})\in \R^n\,|\, \widetilde{t_j}\geq 0, \sum \widetilde{t_j}=1\}$. These two representations can be related via $\widetilde{t_j}=t_{j+1}-t_{j}$ (where we set $t_0=0$ and $t_{n+1}=1$). Now for a fixed sequence of vector bundles $\vec{\mc E}= (\mc E_{i_0}\stackrel {g_{i_1,i_0}}\longrightarrow \dots \stackrel {g_{i_n,i_{n-1}}} \longrightarrow \mc E_{i_n})$ on $U_{i_0,\dots, i_n}$, we evaluate the transferred connections \eqref{EQU:nabdladeltaj} to be $\nabla_{\Delta,j}=g_{i_j,i_0}^{-1}\circ d_U\circ g_{i_j,i_0}=g_{i_j,i_0}^{-1} \circ d_U(g_{i_j,i_0})+d_U$, where we wrote $d_U$ for the de Rham differential on $U_{i_0,\dots,i_n}$.
With this, we compute the connection \eqref{EQU:nabdladelta} on $U_{i_0,\dots, i_n}\times |\Delta^n|$ to be
\begin{multline}
\nabla_\Delta
=d_{|\Delta^n|}+\nabla_{\Delta,n}+\sum_{j=1}^n t_j\cdot (\nabla_{\Delta,j-1}-\nabla_{\Delta,j})
\\
=d_{|\Delta^n|}+g_{i_n,i_0}^{-1} \circ d_U(g_{i_n,i_0})+d_U+\sum_{j=1}^n t_j\cdot \left(g_{i_{j-1},i_0}^{-1} \circ d_U(g_{i_{j-1},i_0})-g_{i_j,i_0}^{-1} \circ d_U(g_{i_j,i_0})\right)
\\
=d+\sum_{j=0}^n (t_{j+1}-t_j) \cdot g_{i_0,i_j}^{-1} d(g_{i_0,i_j})
=d+\sum_{j=0}^n \widetilde{t_j} \cdot g_{i_0,i_j}^{-1} d(g_{i_0,i_j})=d+\theta_I,
\end{multline}
where we have written $d$ for the de Rham differential on $U_{i_0,\dots,i_n}\times |\Delta^n|$ and (following Bott and Tu \cite[p. 305]{BT}) $\theta_I:=\sum_{j=0}^n \widetilde{t_j} \cdot g_{i_0,i_j}^{-1} d(g_{i_0,i_j})$. Note that this has curvature $R=(d+\theta_I)^2=d(\theta_I)+\frac 1 2 [\theta_I,\theta_I]$, and so the Chern character associates \eqref{EQU:Tot-0-Cech-deRham} to this, i.e., $\alpha_{i_0,\dots,i_n}=\sum_{k\geq 0} \frac{1}{k!\cdot (2\pi i)^k}\int_{|\Delta^n|}\tr\left(R^k\right)$. In particular, in \v{C}ech degree $0$, $\alpha_{i_0}=\tr(Id)=\text{rank}(E)$ vanishes in positive de Rham degrees, since $\nabla_\Delta=d$ is flat, whereas for $n\geq 1$, $\alpha_{i_0,\dots,i_n}$ is of de Rham degree $\geq n$, since $d(\theta_I)$ is a sum of terms with at most one $d\widetilde{t_j}$ factor, and so the integral over $|\Delta^n|$ is non-vanishing only when $d(\theta_I)$ is multiplied exactly $n$ times (but possibly multiplied by further terms of $\frac 1 2 [\theta_I,\theta_I]$).
\[
\scalebox{0.7}{
\begin{tikzpicture}
\begin{axis}[axis lines = center, grid=both, xmin=-2, xmax=6, ymin=-2, ymax=6, xtick={-2,...,6},  ytick={-2,...,6}, xticklabels={}, yticklabels={}];
\draw [fill, black] (0,0) circle [radius=.1]; 
\draw [fill, black] (1,1) circle [radius=.1]; 
\draw [fill, black] (1,3) circle [radius=.1]; 
\draw [fill, black] (1,5) circle [radius=.1]; 
\draw [fill, black] (2,2) circle [radius=.1]; 
\draw [fill, black] (2,4) circle [radius=.1]; 
\draw [fill, black] (2,6) circle [radius=.1]; 
\draw [fill, black] (3,3) circle [radius=.1]; 
\draw [fill, black] (3,5) circle [radius=.1]; 
\draw [fill, black] (4,4) circle [radius=.1]; 
\draw [fill, black] (4,6) circle [radius=.1]; 
\draw [fill, black] (5,5) circle [radius=.1]; 
\draw [fill, black] (6,6) circle [radius=.1]; 
\draw [dashed, thick] (0,0)--(6,6);
\node at (5.3,-0.5) {\v{C}ech};  \node at (-1,5.5) {de Rham};
\end{axis}
\end{tikzpicture}}
\]
\end{example}

Using the second bullet point from proposition \ref{PROP:Chern-for-choice-of-cover}, we have the following corollary.
\begin{corollary}\label{COR:CW-id-BT}
The Chern character forms obtained in examples \ref{EXA:Chern-Weil-example} (coming from Chern-Weil theory) and \ref{EXA:Bott-Tu-example} (as found in Bott and Tu \cite[p. 305]{BT}) differ by an exact \v{C}ech-de Rham element.
\end{corollary}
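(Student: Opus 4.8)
The plan is to recognize both examples as two instances of a single $0$-simplex datum in $\Tot(\VB(\NU))$ that share all combinatorial data and differ only in the chosen connections, and then to invoke the second bullet of Proposition \ref{PROP:Chern-for-choice-of-cover} directly.

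First I would observe that Examples \ref{EXA:Chern-Weil-example} and \ref{EXA:Bott-Tu-example} are built from literally the same input: a fixed vector bundle $E\to M$, the same good cover $\mc U=\{U_i\}_{i\in I}$, the same trivializations $s_i:\C^d\times U_i\stackrel\cong\to E|_{U_i}$, and hence the same transition isomorphisms $g_{i,j}=(s_i^{-1}|_{U_{i,j}})\circ(s_j|_{U_{i,j}})$ on the same bundles $E_i=\C^d\times U_i$. In particular the cocycle condition $g_{i,j}\circ g_{j,k}=g_{i,k}$ together with $g_{i,i}=\mathrm{id}$ holds identically for both. The only difference between the two examples is the choice of connection on each $E_i$: Chern-Weil uses the transferred global connection $\nabla_i^{(0)}=s_i^{-1}\circ\nabla|_{U_i}\circ s_i$, while Bott-Tu uses the flat connection $\nabla_i^{(1)}=d$.

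This is exactly the hypothesis of the second bullet of Proposition \ref{PROP:Chern-for-choice-of-cover}: two connection assignments $\nabla_i^{(0)}$ and $\nabla_i^{(1)}$ on the same $E_i$ with the same transition functions. I would therefore set $f_i:=\mathrm{id}_{E_i}$ and check the one compatibility $f_i\circ g_{i,j}^{(0)}=g_{i,j}^{(1)}\circ f_j$ required by Proposition \ref{PROP:Tot(VB)01}; since $g_{i,j}^{(0)}=g_{i,j}^{(1)}=g_{i,j}$ and $f_i=\mathrm{id}$, this is immediate. Thus the two $0$-simplices $\mathfrak E^{(0)}$ and $\mathfrak E^{(1)}$ they determine are joined by a $1$-simplex in $\Tot(\VB(\NU))$.

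Finally, I would push this $1$-simplex forward under $\Tot(\Ch(\NU))$. By the second bullet of Proposition \ref{PROP:Tot(Omega)}, its image is a $1$-simplex in $\DKSet(\vC^\bu(\mc U,\OdR^\bu)[u]^{\leq 0})$, consisting of its two boundary $0$-simplices---namely the Chern-Weil forms $\{\alpha^{(0)}_{i_0,\dots,i_n}\}$ and the Bott-Tu forms $\{\alpha^{(1)}_{i_0,\dots,i_n}\}$ of \eqref{EQU:Tot-0-Cech-deRham}---together with an odd element $\beta\in\vC^\bu(\mc U,\OdR^\bu)^{odd}$ whose \v{C}ech-de Rham differential equals $\alpha^{(1)}-\alpha^{(0)}$. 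Hence this difference is exact, which is the assertion. There is essentially no computational obstacle, as all the analytic content (fiber integration, the chain-map property, and the totalization identifications) was already discharged in Propositions \ref{PROP:Ch-presheaf-map}, \ref{PROP:Tot(VB)01}, \ref{PROP:Tot(Omega)}, and \ref{PROP:Chern-for-choice-of-cover}. The only point deserving care is verifying that the two examples genuinely produce the \emph{same} underlying $0$-simplex data (the same $E_i$ and the same cocycle $g_{i,j}$), so that they are comparable by a single $1$-simplex rather than merely through a zig-zag involving an auxiliary bundle.
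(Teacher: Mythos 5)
Your proposal is correct and follows exactly the paper's own argument: the paper derives the corollary by noting that both examples use the same local bundles $E_i$ and transition functions $g_{i,j}$ and differ only in the choice of connections, then invoking the second bullet of Proposition \ref{PROP:Chern-for-choice-of-cover}. Your additional verification that $f_i=\mathrm{id}$ satisfies the compatibility condition of Proposition \ref{PROP:Tot(VB)01} is a correct (and welcome) spelling-out of the detail the paper leaves implicit.
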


\begin{remark}\label{REM:smooth-d-or-no-d}
From propositions \ref{PROP:Tot(Omega)} we know that the output of $0$-simplicies of $\Tot(\Ch(\NU)): \Tot(\VB(\NU)) \to \Tot(\OMdR(\NU))$ can be taken to land in (even) \v{C}ech-de Rham forms $\DKSet(\vC^\bu(\mc U,\OdR^\bu)[u]^{\leq 0})_0\cong \vC^\bu(\mc U,\OdR^\bu)^{even}$, while these forms for two $0$-simplexes connected by a $1$-simplex are \v{C}ech-de Rham exact. Using the \v{C}ech-de Rham isomorphism $H^\bu(\vC^\bu(\mc U,\OdR^\bu),\delta+d)\cong H^\bu_{\text{dR}}(M,\C)$, isomorphism classes of complex vector bundles thus have an output in de Rham cohomology.

We note that  in definition \ref{DEF:OMdR}, we could also have taken the differential in $\OdR^\bu(-)$ to be $0$ instead of the de Rham differential $d$, which is similar to what we did in a holomorphic setting in our previous paper \cite{GMTZ1} where we used a zero differential on holomorphic forms and restricted ourselves to ``diagonal terms'', i.e. to \v{C}ech forms with equal \v{C}ech and form degree. However, if we take de Rham forms with zero differential, the output from the corresponding Chern character map would be in $ \vC^\bu(\mc U,\OdR^\bu)$ with only the \v{C}ech differential $\delta+0=\delta$. In this case, our map would assign to a complex vector bundles an element in this cohomology which is $H^p(\vC^\bu(\mc U,\OdR^\bu),\delta)= \OdR^p(M)$, since each $\OdR^p$ is an acyclic sheaf.

A similar analysis in the holomorphic setting will be considered in section \ref{SEC:holomorphic-Chern-extended}. In the holomorphic case, having no differential on holomorphic forms gives an interesting cohomology (Hodge cohomology); indeed this was the setup in \cite{GMTZ1}. However, ``turning on the differential'' will also be of interest; see remark \ref{REM:smooth-del-or-no-del} below.
\end{remark}

\subsection{Representations of the Chern Character Map}

The weak equivalence $\VB \xrightarrow{\sim}  \BUN$ of proposition \ref{REM: VB iso VB nabla} means that the Chern character map $\Ch: \VB \to \OMdR$ from definition \ref{DEF:CH-for-VBnabla} induces a Chern character map in the homotopy category of simplicial presheaves from $\BUN$ to $\OMdR$. The examples from the previous section recall the independence of the Chern character of certain choices in representing the map. We now give an interpretation for these examples by providing various representations of the Chern character in terms of sheafified versions of a variety of categories.

\begin{definition}
Denote by $\fvbn(M)$ the category of complex vector bundles $\mc E=(E,\nabla)$ over $M$ with flat connections, whose morphisms $f\in \Mor(\mc E',\mc E)$ consist of bundle isomorphisms $f:E'\to E$ over the identity of $M$, where $f$ has no particular compatibility with the connections. Denote by $\FVB(M)=\Nerve(\fvbn(M))$ the nerve of this category. Elements of $\FVB(M)_n$ are just as elements \eqref{EQU:vecE} in $\VB(M)_n$ with the extra condition that the connections are all flat.
\end{definition}

\begin{definition}
Denote by $\pbn(M) \hookrightarrow \fvbn(M)$ the full subcategory of complex product vector bundles $\mc E=(M \times \mathbb{C}^m,d_{DR})$ over $M$ with the de Rham differential acting as a flat connection, whose morphisms $f\in \Mor(M \times \mathbb{C}^m,M \times \mathbb{C}^m)$ consist of bundle automorphisms given by $\tilde{f}\in \Gamma(M, GL_m(\mathbb{C}))$. Denote by $\PVB(M)=\Nerve(\pbn(M)) \hookrightarrow \FVB(M)$ the nerve of this category.
\end{definition}

Note in particular that each connected component of this subcategory $\pbn(M) \hookrightarrow \fvbn(M)$ has one object and so simplices in the nerve $\PVB(M)$ consist of concatenating automorphisms. 

\begin{definition}\label{COR flat chern}
Since flat vector bundles over $M$ further include in all vector bundles over $M$, $\fvbn(M)\hookrightarrow \vbn(M)$, we obtain induced Chern character maps, 
\begin{equation}\label{EQU:Ch-for-FVB-PrVB}
\begin{tikzcd}
\PVB(M) \arrow[r, hook] \arrow[rrrr, bend left=-25, "\Ch_{pr}(M)"]  & \FVB(M)  \arrow[r, hook] \arrow[rrr, bend left=-25, "\Ch_{fl}(M)"] & \VB(M) \arrow[r, dotted, "\vec{\mc E}\mapsto \vec{\mc E}_\Delta"] \arrow[rr, bend left=-25, "\Ch(M)"] & \BVB(M) \arrow[r, "\BCh(M)"]   & \OMdR(M)
\end{tikzcd}
\end{equation}
which we denote by $\Ch_{pr}$ and $\Ch_{fl}$, respectively, and which are maps of simplicial presheaves.
\end{definition}

Neither $\FVB\hookrightarrow \VB$ nor $\PVB\hookrightarrow \FVB$ are essentially surjective maps, yet we will show that these maps induce equivalences after sheafification. What is meant by \emph{sheafification} is described in \cite[Section 5]{GMTZ2}, but the idea is briefly reviewed below. 

Given a projectively fibrant simplicial presheaf ${\bf F} \in sPre(\Man)$ consider its fibrant replacement in the local projective model structure ${\bf F} \simeq {\bf F}' \in sPre(\Man)_{loc}$, which takes the projective model structure on simplicial presheaves and inverts all maps induced by hypercoverings. By \cite[Proposition 5.2]{GMTZ2}, when ${\bf F}$ is an $n$-type, the fibrant replacement can be computed on a test object $X \in \Man$ by taking the colimit of the simplicial mapping space $\underline{sPre}(\mathcal{U}, {\bf F})$ over all \v{C}ech covers $(\mathcal{U} \to rX) \in \check{S}$, 
\begin{equation}\label{EQ: sheafification def}
 \CechSh{\bf F}(X) := \colim_{(U_{\bullet} \to X) \in \check{S}} \Tot\left( {{\bf F}}(\CechNerve \mathcal{U}  ) \right),
 \end{equation}
where the totalization, $\Tot$, is described in \cite[Appendix D.1 and Proposition 3.15]{GMTZ1}
Furthermore this construction can be applied to a map $\alpha: {\bf F} \to {\bf G}$ between projectively fibrant $n$-types to provide a fibrant approximation, $\CechSh{\alpha}: \CechSh{\bf F} \to \CechSh{\bf F}$. Since this paper is concerned with special simplicial presheaves for the domain of the Chern maps, namely the nerve of some groupoid, these $1$-types satisfy the conditions of the above fibrant approximation construction.

\begin{proposition}
The simplicial presheaves, $\BUN$, $\VB$, and $\OMdR$ are all simplicial sheaves. 
\begin{proof}
First note that all three simplicial presheaves are in fact fibrant in the global projective model structure since on each object they take values in Kan complexes. In the case of $\BUN$ and $\VB$, this is due to taking the nerve of a groupoid whereas in $\OMdR$ it is due to the Dold-Kan map taking values in simplicial abelian groups. 

Next, the simplicial presheaves need to satisfy descent \cite[Definition 4.3]{DHI} with respect to any cover. Since $\VB$ and $\BUN$ are equivalent (proposition \ref{REM: VB iso VB nabla}) it suffices to show that $\BUN$ and $\OMdR$ satisfy descent. Let $M$ be a fixed (paracompact) manifold of dimension $n$. Since $\BUN(M)$ is a 1-type by virtue of it being the nerve of a groupoid, and $\OMdR$ is an $n$-type by virtue of the de Rham complex being trivial after degree $n$, these simplicial presheaves satisfy descent with respect to all hypercovers of $M$ precisely when they satisfy descent with respect to all \v{C}ech cover \cite[Corollary A.8]{DHI}. Thus it suffices to check that ${\bf F}=  \BUN, \OMdR$ satisfies descent for a given \v{C}ech covers, $\{U_i \to M\}$, by showing that the map of simplicial sets
\[ {\bf F}(M) = \underline{sPre}(rM, {\bf F}) \to \underline{sPre}( \vC (\mathcal{U}) , {\bf F}) \simeq \Tot\left( {\bf F}(U_i) \right) \]
is a weak equivalence, where the left hand equality is given by the simplicial Yoneda lemma and the right sided weak equivalence is given by \cite[Theorem 18.7.4]{H}. Proving the weak equivalence for ${\bf F}= \BUN$ is done in three steps. First, the totalization of $\BUN$, which is 2-coskeletal, applied to a cover will again be 2-coskeletal, and thus a 1-type, so only the equivalences for $\pi_0$ and $\pi_1$ are needed. Proving the equivalence for $\pi_0$ boils down to recalling that globally defined bundles are (up to isomorphism) precisely locally defined bundles glued together with isomorphisms satisfying the cocycle condition. Proving the equivalence for $\pi_1$ essentially amounts to recalling that isomorphisms of bundles, $f: E \to E'$,  are equivalent to locally defined isomorphisms $f_i: E_i \to E_i'$ which satisfy strictly commutative squares. For the proof of the weak equivalence when ${\bf F} = \OMdR$, apply the the chain homotopy equivalence between the de Rham complex and the \v{C}ech-de Rham complex (see \cite{BT} for example). 
\end{proof}
\end{proposition}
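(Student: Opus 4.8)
The plan is to verify the two conditions defining a simplicial sheaf here: that each presheaf is objectwise fibrant in the global projective model structure, and that each satisfies descent with respect to covers. The first is immediate. Both $\BUN$ and $\VB$ are objectwise nerves of groupoids, hence objectwise Kan complexes, and $\OMdR$ is objectwise in the image of the Dold-Kan functor, so it takes values in simplicial abelian groups, which are always Kan. Thus all the content is in descent, and I would spend the rest of the argument there.

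For descent I would first cut down the covers that must be tested. The presheaves $\BUN$ and $\VB$ are objectwise $1$-types, being nerves of groupoids, while $\OMdR(M)$ is controlled by the de Rham complex $\OdR^\bu(M)$, which is concentrated in degrees $0$ through $\dim M$; together with the finite covering dimension of a paracompact manifold, \cite[Corollary A.8]{DHI} then lets me replace descent for arbitrary hypercovers by descent for ordinary \v{C}ech covers. Moreover, since $\VB \xrightarrow{\sim} \BUN$ is a global weak equivalence (proposition \ref{REM: VB iso VB nabla}) and descent is invariant under global weak equivalence of globally fibrant presheaves, it suffices to treat $\BUN$ and $\OMdR$. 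So the problem becomes: for each \v{C}ech cover $\{U_i \to M\}$ with nerve $\NU$, the canonical map ${\bf F}(M) \to \Tot({\bf F}(\NU))$ is a weak equivalence of simplicial sets.

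For ${\bf F} = \BUN$ I would use that the nerve of a groupoid is $2$-coskeletal, so its totalization over a cover is again $2$-coskeletal and hence a $1$-type; only $\pi_0$ and $\pi_1$ then need matching. The isomorphism on $\pi_0$ is the classical reconstruction of a global bundle, up to isomorphism, from local bundles $E_i \to U_i$ together with gluing isomorphisms $g_{i,j}$ satisfying the cocycle condition, which is exactly the description of $0$-simplices of the totalization; the isomorphism on $\pi_1$ amounts to the fact that a global bundle isomorphism is the same datum as a compatible family of local ones. Both facts are standard, the only real work being to match them with the equalizer presentation of $\Tot$, bookkeeping already carried out in proposition \ref{PROP:Tot(VB)01}.

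The case ${\bf F} = \OMdR$ is where I expect the real obstacle to lie. Here I would invoke proposition \ref{PROP:Tot(Omega)}, which identifies $\Tot(\OMdR(\NU))$ with $\DKSet(\vC^\bu(\mc U, \OdR^\bu)\ul)$, so that the descent map becomes $\DKSet$ applied to the augmentation $\OdR^\bu(M)\ul \to \vC^\bu(\mc U, \OdR^\bu)\ul$ of the de Rham complex into the \v{C}ech-de Rham complex. Since $\DKSet$ converts quasi-isomorphisms of non-positively graded complexes into weak equivalences, it would suffice to see that this augmentation is a quasi-isomorphism; the underlying statement is the classical \v{C}ech-de Rham theorem, proved by a partition-of-unity chain homotopy (as in \cite{BT}). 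The delicate point, and the reason I single this step out, is that one works not with the bare complexes but with the functor $\ul$ adjoining the formal variable $u$ of degree $-2$ and truncating to non-positive degrees; brutal truncation does not respect quasi-isomorphisms in general, so one must check that the explicit homotopy is compatible with $u$ and survives the truncation. This is exactly the technical content guaranteed by lemmas 3.10--3.12 of \cite{GMTZ1} (which, as noted before proposition \ref{PROP:Tot(Omega)}, remain valid for a nonzero differential), and I would lean on them to transport the \v{C}ech-de Rham equivalence through Dold-Kan and the totalization.
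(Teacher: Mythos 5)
Your proof follows the paper's argument essentially verbatim: objectwise fibrancy via nerves of groupoids and Dold--Kan, reduction from hypercover descent to \v{C}ech descent via the $n$-type bound and \cite[Corollary A.8]{DHI}, the $\pi_0/\pi_1$ gluing argument for the $2$-coskeletal presheaf $\BUN$, and the \v{C}ech--de Rham equivalence for $\OMdR$. Your added caution that the truncation $\ul$ must be checked to preserve the quasi-isomorphism (via lemmas 3.10--3.12 of \cite{GMTZ1}) is a worthwhile refinement of the paper's terser citation of \cite{BT}, but it is the same route.
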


Next, the maps $\PVB \to \FVB \to \VB \to \BUN$ from \eqref{EQU:Ch-for-FVB-PrVB} are seen to be equivalences in the local projective model structure on simpicial presheaves, despite the two left-most maps coming from ``inclusions'' in the global projective model structure.

\begin{proposition}
Applying the above sheafification functor to the diagram
\begin{equation}\label{Diagram: four bundle presheaves}
\begin{tikzcd}
\PVB \arrow[r, "i_a", hook] \arrow[dr]   & \FVB \arrow[r, "i_b", hook] \arrow[d] & \VB \arrow[dl]\\
 & \BUN  & 
\end{tikzcd}
\end{equation}
where the vertical maps simply forget the connection data, induces weak equivalences between simplicial sheaves
\[\CechSh{\PVB} \simeq \CechSh{\FVB} \simeq \CechSh{\VB} \simeq  \CechSh{\BUN} \simeq \VB \simeq \BUN \]  in the projective model structure, and so all of the simplicial presheaves in \eqref{Diagram: four bundle presheaves} are weakly equivalent in the local projective model strucutre.
\begin{proof}
Each of the simplicial presheaves in \eqref{Diagram: four bundle presheaves} are 1-types (as they are the nerve of a groupoid) and thus their fibrant replacement in the local projective model structure can be computed by \eqref{EQ: sheafification def} as described in the beginning of this section. Next, again since these simplicial presheaves are 2-coskeletal, their sheafification is again 2-coskeletal and thus a 1-type. As such, it suffices to establish the isomorphisms on $\pi_0$ and $\pi_1$. The proof for the horizontal arrows in \eqref{Diagram: four bundle presheaves} inducing an isomorphism on $\pi_0$ essentially follows from the fact that for each vertex in $\CechSh{\VB}(M)$, i.e. locally defined bundles glued by cocycle data, there exists a refinement of the open cover so that the restriction of the locally defined bundles to the finer open sets are now flat bundles. Up to isomorphism on each open set in $\FVB$, a flat connection can then be chosen. Similarly, by refining the open sets further, the bundle will be isomorphic to a trivial product bundle with zero connection. The proof for $\pi_1$ is similar.
\end{proof}
\end{proposition}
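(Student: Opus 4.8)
The plan is to reduce the whole statement to a computation of the homotopy sheaves $\pi_0$ and $\pi_1$. First I would dispose of the right-hand equivalences essentially for free: by the previous proposition $\BUN$, $\VB$ and $\OMdR$ are already simplicial sheaves, so their local-projective fibrant replacements recover themselves, giving $\CechSh{\BUN}\simeq \BUN$ and $\CechSh{\VB}\simeq \VB$, while $\VB\simeq \BUN$ is proposition \ref{REM: VB iso VB nabla}. Thus the only genuinely new content is that the inclusions $i_a:\PVB\hookrightarrow\FVB$ and $i_b:\FVB\hookrightarrow\VB$ become weak equivalences after applying $\CechSh{(-)}$. Each of $\PVB$, $\FVB$, $\VB$, $\BUN$ is the nerve of a groupoid, hence a $1$-type and $2$-coskeletal, so the fibrant-replacement formula \eqref{EQ: sheafification def} applies on every test manifold; since the totalization of a $2$-coskeletal object is again $2$-coskeletal, each $\CechSh{(-)}$ is again a $1$-type. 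It therefore suffices to check that $i_a$ and $i_b$ induce isomorphisms on $\pi_0$ and $\pi_1$, which by \eqref{EQ: sheafification def} are filtered colimits over \v{C}ech covers of the homotopy groups of $\Tot$ of each presheaf.

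For $\pi_0$ I would use the explicit description of the vertices of $\Tot(\VB(\NU))$ from proposition \ref{PROP:Tot(VB)01}: local bundles with connections $(E_i,\nabla_i)$ over $U_i$ glued by cocycle isomorphisms $g_{i,j}$ that need not respect the connections. Surjectivity of $\pi_0(\CechSh{i_b})$ and $\pi_0(\CechSh{i_a})$ then follows by refinement: pass to a finer cover on which each $E_i$ is trivializable, replace $\nabla_i$ in a trivialization by the flat product connection to obtain a vertex of $\CechSh{\FVB}$, and present the bundle by $GL_m(\C)$-valued transition functions with the de Rham connection to obtain a vertex of $\CechSh{\PVB}$. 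Because the colimit runs over all covers, these refinements do not alter the homotopy class, so both maps hit every class. Injectivity on $\pi_0$ follows from the standard fact that local-bundle-plus-cocycle data, taken up to the equivalence recorded in proposition \ref{PROP:Tot(VB)01}, corresponds bijectively to isomorphism classes of global bundles, and this correspondence is unaffected by the flatness or triviality constraints once refinement is permitted.

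For $\pi_1$ I would base at a vertex (a bundle $E$) and use that an edge of $\Tot(\VB(\NU))$ is, again by proposition \ref{PROP:Tot(VB)01}, a family of local isomorphisms $f_i$ intertwining the cocycles. Since the morphisms in all three groupoids $\pbn(M)$, $\fvbn(M)$, $\vbn(M)$ are bundle isomorphisms with no compatibility imposed on the connections, and every automorphism of a trivialized bundle is locally given by an element of $\Gamma(U_i,GL_m(\C))$, the same refinement argument identifies the relevant automorphism groups and shows that $i_a$ and $i_b$ induce isomorphisms on $\pi_1$. The equivalences with $\BUN$ along the vertical maps then follow by two-out-of-three.

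The hard part will be the colimit-over-covers bookkeeping: one must verify that the system of \v{C}ech covers appearing in \eqref{EQ: sheafification def} is filtered and that the refinement maps are cofinal, so that the choices of trivialization and of flat (respectively de Rham) connection made on a finer cover are well-defined up to the relevant homotopy and assemble compatibly through the totalization. Equivalently, the delicate point is to see that forgetting a connection and replacing it by a flat one is a deformation inside the mapping space that is \emph{uniform} over the cover, so that it descends to an honest isomorphism of homotopy sheaves rather than a mere bijection on vertex sets. Once this filteredness and compatibility is established, the $\pi_0$ and $\pi_1$ computations above complete the proof.
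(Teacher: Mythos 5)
Your proposal is correct and follows essentially the same route as the paper: reduce to $\pi_0$ and $\pi_1$ via the $2$-coskeletal/$1$-type observation, then argue by refinement of covers that flat and then trivial product connections can always be chosen locally, with the right-hand equivalences coming from $\BUN$, $\VB$ already being simplicial sheaves. The paper's own proof is just as brief about the colimit-over-covers bookkeeping that you flag as the delicate point.
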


\begin{corollary}
Any paths through the following diagram beginning at $\BUN$ and ending at $\OM$, allowing for zig-zags when weak equivalences are present, 
\begin{equation}\label{EQ representations of CH}
\begin{tikzcd}
& \PVB \arrow[ddl, "\check{\dagger}"'] \arrow[r, "i_a", hook]  & \FVB   \arrow[ddl, "\check{\dagger}"'] \arrow[r, "i_b", hook] & \VB  \arrow[ddl, "\check{\dagger}"', "\sim"' {anchor=north, rotate=60}]  \arrow[d, "\sim" {anchor=south, rotate=90}] \arrow[rr, "\Ch"] & & \OM  \arrow[ddl, "\check{\dagger}"' , "\sim"' {anchor=north, rotate=60}] \\
& &  & \BUN  \arrow[ddl, "\check{\dagger}"', "\sim" {anchor=north, rotate=60}, pos=0.2]  \\
\CechSh{\PVB} \arrow[r, "\CechSh{i_a}", "\sim"', hook]  & \CechSh{\FVB} \arrow[r, "\CechSh{i_b}", "\sim"', hook] & \CechSh{\VB} \arrow[d, "\sim" {anchor=south, rotate=90}] \arrow[rr, "\CechSh{\Ch}", crossing over] & & \CechSh{\OM}\\
&  & \CechSh{\BUN}  
\end{tikzcd}
\end{equation}
is a representation of the Chern character map in the homotopy category of simplicial presheaves.
\end{corollary}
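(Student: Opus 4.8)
The plan is to show that, once every arrow of \eqref{EQ representations of CH} carrying a $\sim$ (together with the sheafification maps $\check\dagger$) is inverted, the whole diagram commutes in the local projective homotopy category $\mathrm{Ho}(sPre(\Man)_{loc})$, so that any two admissible zig-zags sharing source $\BUN$ and target $\OMdR$ compute the same morphism. First I would record the inputs. By the preceding proposition the presheaves $\BUN$, $\VB$, and $\OMdR$ are simplicial sheaves, hence already fibrant in the local projective model structure; consequently each sheafification map $\mathbf F\to\CechSh{\mathbf F}$ of \eqref{EQ: sheafification def} is a local weak equivalence with equivalent target, giving $\CechSh{\VB}\simeq\VB$, $\CechSh{\BUN}\simeq\BUN$, and $\CechSh{\OMdR}\simeq\OMdR$ in $\mathrm{Ho}$. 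By Proposition \ref{REM: VB iso VB nabla} the forgetful map $\VB\to\BUN$ is a weak equivalence, and by the proposition immediately preceding this corollary the sheafified inclusions $\CechSh{i_a}$, $\CechSh{i_b}$ and the sheafified forgetful maps into $\CechSh{\BUN}$ are weak equivalences. Thus every $\sim$-labelled arrow of \eqref{EQ representations of CH} becomes invertible in $\mathrm{Ho}(sPre(\Man)_{loc})$.

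Next I would verify commutativity. The diagram \eqref{Diagram: four bundle presheaves} commutes strictly at the presheaf level, since forgetting the connection after an inclusion $i_a,i_b$ agrees with forgetting it directly; applying the functor $\CechSh{(-)}$ therefore yields a strictly commuting diagram of sheaves. The $\check\dagger$-labelled arrows connecting the upper copies of $\PVB,\FVB,\VB,\BUN,\OMdR$ to their sheafifications are the components of the natural transformation $\mathrm{id}\Rightarrow\CechSh{(-)}$, so each such square commutes on the nose by naturality; in particular the square
\[
\begin{tikzcd}
\VB \arrow[r,"\Ch"] \arrow[d,"\check\dagger"] & \OMdR \arrow[d,"\check\dagger"] \\
\CechSh{\VB} \arrow[r,"\CechSh{\Ch}"] & \CechSh{\OMdR}
\end{tikzcd}
\]
commutes, identifying $\CechSh{\Ch}$ with $\Ch$ under the vertical equivalences. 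Assembling these, the whole of \eqref{EQ representations of CH} commutes in $\mathrm{Ho}$.

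Having established commutativity, I would conclude path-independence: any zig-zag from $\BUN$ to $\OMdR$ moving forward along solid arrows and in either direction along the invertible $\sim$-arrows represents a single morphism in $\mathrm{Ho}(sPre(\Man)_{loc})$, namely the image under localization of the canonical roof $\BUN\xleftarrow{\ \sim\ }\VB\xrightarrow{\ \Ch\ }\OMdR$. This is precisely the Chern character map in the homotopy category. In particular, the route through $\CechSh{\PVB}$ — product bundles with the flat de Rham connection, whose totalization produces the Bott--Tu cocycle of Example \ref{EXA:Bott-Tu-example} — and the route remaining in $\VB$ with a globally chosen connection — giving the Chern--Weil cocycle of Example \ref{EXA:Chern-Weil-example} — are thereby identified, re-deriving Corollary \ref{COR:CW-id-BT} at the level of $\mathrm{Ho}$.

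The only nontrivial input is the previous proposition, that the inclusions $i_a:\PVB\hookrightarrow\FVB$ and $i_b:\FVB\hookrightarrow\VB$ — which are \emph{not} weak equivalences as maps of presheaves, being far from essentially surjective — become weak equivalences after applying $\CechSh{(-)}$. This is what licenses traversing those inclusions backwards in $\mathrm{Ho}$, and hence what makes the $\PVB$- and $\FVB$-representations legitimate; everything else is the formal bookkeeping of naturality and functoriality of fibrant replacement. I expect no further obstacle, since the endpoints $\BUN$ and $\OMdR$ are fixed and the localization of a commuting diagram assigns a well-defined morphism between any two of its objects.
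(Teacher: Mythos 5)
Your proposal is correct and follows essentially the same route the paper intends: the corollary is stated as a formal consequence of the two preceding propositions (sheaf conditions on $\BUN$, $\VB$, $\OMdR$ and the weak equivalences induced by sheafifying \eqref{Diagram: four bundle presheaves}), and your argument supplies exactly that bookkeeping — naturality of $\mathrm{id}\Rightarrow\CechSh{(-)}$ for the squares, invertibility of the $\sim$-arrows in the local projective homotopy category, and identification of every zig-zag with the roof $\BUN\xleftarrow{\sim}\VB\xrightarrow{\Ch}\OMdR$. No gaps.
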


The above diagram and corollary of course could be expanded to include more paths (i.e. factoring through sheafification of maps forgetting connections $\CechSh{\PVB} \to \CechSh{{\bf{PVB}}}$ or trivial vector bundles with varying connections, etc).
\begin{example}
The Chern character given by Chern-Weil (example \ref{EXA:Chern-Weil-example}) is represented by the path through the diagram in \eqref{EQ representations of CH}, 
\[\begin{tikzcd}
\BUN \arrow[r, "\sim"', dotted] & \VB  \arrow[r, "\check{\dagger}", "\sim"'] & \CechSh{\VB}  \arrow[r, "\CechSh{\Ch}" ] & \CechSh{\OM} \arrow[r, dotted, "\sim"'] & \OM.
\end{tikzcd}\]
The left-most dotted arrow represents choosing a global connection, the $\check{\dagger}$ arrow is choosing a cover and using the cocycle data, and the last dotted arrow amounts to the \v{C}ech-de Rham isomorphism. On the other hand, the Bott-Tu Chern character (example \ref{EXA:Bott-Tu-example}) is represented by the path,
\[\begin{tikzcd}
\BUN \arrow[r, dotted, "\sim"'] & \CechSh{\PVB}  \arrow[r,  hook, "\CechSh{i_b} \circ \CechSh{i_a}", "\sim"'] & \CechSh{\VB}  \arrow[r, "\CechSh{\Ch}" ] & \CechSh{\OM} \arrow[r, dotted,  "\sim"'] & \OM
\end{tikzcd}\]
The left-most dotted arrow represents choosing a cover which fully trivializes a bundle and using the exterior differential connection, the arrow labeled $\CechSh{i_b} \circ \CechSh{i_a}$ includes this cocycle data into the larger sheaf, and then proceed the same as above. The fact that these two maps are equivalent in the homotopy category implies in particular that they provide equivalent maps $\pi_0( \BUN) \xrightarrow{\pi_0 (\CechSh{\Ch})} \pi_0(\OM)= H^{\bullet}(\OdR^\bu)$ which requires that for a given vector bundle the two classes constructed are cohomologous.
\end{example}

\section{Extending the holomorphic Chern character}\label{SEC:holomorphic-Chern-extended}

In this section, we show how the above structures can be applied in the holomorphic setting that we have explored in \cite{GMTZ1}. After applying the totalization, this yields an extended class that is not closed under the \v{C}ech differential $\delta$ as in \cite{GMTZ1}, but in fact closed under $\delta+\partial$; cf. remarks \ref{REM:smooth-del-or-no-del} and \ref{REM:lowest-terms-extended}. The formulae for this extended class are closely related to those obtained by Hosgood \cite{Ho2}.

\subsection{Calculations via Maurer Cartan type forms}

We first rewrite the Chern character map $\Ch: \VB \to \OMdR$ in a slightly different notation than the one used in section \ref{SEC:Tot-of-Chern-CW-BT}, but which will be more suitable for comparing it to the formulas from \cite{GMTZ1} and \cite[Section 5]{Ho2}.
\begin{definition}
Let $\vec{\mc E}= (\mc E_0\stackrel {f_1}\longrightarrow \dots \stackrel {f_n} \longrightarrow \mc E_n)\in \VB(M)_n$ be in the nerve, where each $\mc E_j=(E_j\to M,\nabla_j)$ is a complex vector bundle with connection. We again denote by $\vec{\mc E}_\Delta=(E_\Delta=E_0\times |\Delta^n|\to M\times \Delta^n,\nabladelta) \in \BVB(M)_n$ the complex vector bundle as defined in definition \ref{DEF:E-to-EDelta} with the connection $\nabladelta$ given in equation \eqref{EQU:nabdladelta}. For $j=1,\dots, n$, we denote by $\theta_{j}\in \Om^1(M,End(E_0))$  the Maurer Cartan $1$-forms
\begin{equation}\label{EQU:DEF-theta_j}
\theta_{1}:=\nabladelta_{,0}-\nabladelta_{,1}, \quad\quad\theta_{2} := \nabladelta_{,1}-\nabladelta_{,2}, \quad\quad\dots, \quad\quad\theta_{n}:=\nabladelta_{,n-1}-\nabladelta_{,n},
\end{equation}
where $\nabladelta_{,j}$ are the tranferred connections $\nabladelta_{,j}:=f_1^{-1}\circ \dots \circ f_j^{-1}\circ \nabla_j\circ f_j \circ \dots \circ f_1$ on $E_0$ as in \eqref{EQU:nabdladeltaj}. Then,  the connection on $E_\Delta:=E_0\times |\Delta^n|\to M\times |\Delta^n|$ from \eqref{EQU:nabdladelta} can be written as
\begin{equation}\label{EQU:nabla-Delta-via-thetas}
\nabladelta=d_{|\Delta^n|}+\nabladelta_{,n}+t_1\cdot \theta_1+t_2\cdot\theta_2+\dots + t_n\cdot \theta_n.
\end{equation}
Note, that we can rewrite the $1$-forms $\theta_j$ as
\begin{align*}
\theta_j
&=\nabla_{\Delta,j-1}-\nabla_{\Delta,j}=f_1^{-1}\circ \dots \circ f_{j-1}^{-1}\circ\Big(\nabla_{j-1}- f_j^{-1}\circ \nabla_j\circ f_j \Big)\circ f_{j-1} \circ \dots \circ f_1
\\ 
&=f_1^{-1}\circ \dots \circ f_{j-1}^{-1}\circ\Big( f_j^{-1}\circ \nab{Hom(E_{j-1},E_j)}(f_j) \Big)\circ f_{j-1} \circ \dots \circ f_1,
\end{align*}
where $\nab{Hom(E_{j-1},E_j)}(f)=f\circ \nabla_{j-1}-\nabla_j\circ f$ is the induced connection on the $Hom$-bundle. Simplifying notation, we may simply write this as
\begin{equation}\label{EQU:theta_j-from-f_j}
\theta_j=(f_{j-1}\dots f_1)^{-1} \cdot f_j^{-1}\nabla(f_j)\cdot  (f_{j-1}\dots f_1).
\end{equation}

Furthermore, for $j=1,\dots, n$, we denote the curvatures of the connection $\nabla_j$ on $E_j$ by $R_j:=\nabla_j^2\in \Om^2(M,End(E_j))$. Then, the curvature $R_{\Delta,j}\in \Om^2(M,End(E_0))$ of $\nabla_{\Delta,j}$ on $E_0$ is
\begin{equation}
R_{\Delta,j}=\nabla_{\Delta,j}\circ\nabla_{\Delta,j}=f_1^{-1}\circ \dots \circ f_j^{-1}\circ \nabla_j\circ \nabla_j\circ f_j \circ \dots \circ f_1= (f_{j}\dots f_1)^{-1} R_j (f_{j}\dots f_1).
\end{equation}
Note, in particular, that $R_j=0$ iff $R_{\Delta,j}=0$.
\end{definition}

We calculate the curvature of $\nabladelta$ on $E_\Delta=E_0\times |\Delta^n|\to M\times |\Delta^n|$ as follows.
\begin{proposition}\label{PROP:R-from-MC-forms}
The curvature $R_\Delta\in \Om^2(M\times |\Delta^n|, End(E_\Delta))$ of $\nabladelta$ from \eqref{EQU:nabla-Delta-via-thetas} is
\begin{equation}\label{EQU:R(ExDeltan)}
R_\Delta
=\sum_{j=1}^n dt_j \theta_j 
+ \sum_{i,j=1}^n (t_i \cdot t_j-t_{\min(i,j)})\cdot \theta_i\theta_j
+\sum_{j=0}^{n}(t_{j+1}-t_j)\cdot R_{\Delta,j},
\end{equation}
where $\min(i,j)$ denotes the minimum of $i$ and $j$, and, in the third sum on the right, we used the convention that $t_0=0$ and $t_{n+1}=1$.
\end{proposition}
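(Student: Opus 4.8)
The plan is to compute the curvature directly from the connection one-form and then bookkeep the result into the three advertised sums. First I would pick a local frame for $E_0$, so that each transferred connection reads $\nabladelta_{,j}=d_M+\omega_j$ for a matrix-valued one-form $\omega_j$; then $\theta_j=\omega_{j-1}-\omega_j$ and $R_{\Delta,j}=d_M\omega_j+\omega_j\wedge\omega_j$. Writing $d=d_M+d_{|\Delta^n|}$ for the total de Rham differential on $M\times|\Delta^n|$, the connection \eqref{EQU:nabla-Delta-via-thetas} becomes $\nabladelta=d+\omega(t)$ with $\omega(t)=\omega_n+\sum_{j=1}^n t_j\theta_j$, and so $R_\Delta=d\omega(t)+\omega(t)\wedge\omega(t)$. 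Since every term on the right of \eqref{EQU:R(ExDeltan)} is built from the globally defined tensorial objects $\theta_i\theta_j$ and $R_{\Delta,j}$, it suffices to verify the identity in this arbitrary frame. The differential $d_{|\Delta^n|}$ only acts on the coefficients $t_j$ and produces $\sum_{j=1}^n dt_j\,\theta_j$, the first sum of \eqref{EQU:R(ExDeltan)} (here one must track the sign as $d_{|\Delta^n|}$ passes the one-form $\theta_j$); all remaining contributions lie in the $M$-direction and assemble into the fibrewise curvature $R(t):=d_M\omega(t)+\omega(t)\wedge\omega(t)$.

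Next I would rewrite $\omega(t)$ as a convex combination. Telescoping $\theta_j=\omega_{j-1}-\omega_j$ gives
\[
\omega(t)=\omega_n+\sum_{j=1}^n t_j(\omega_{j-1}-\omega_j)=\sum_{j=0}^n \widetilde{t_j}\,\omega_j,\qquad \widetilde{t_j}:=t_{j+1}-t_j,
\]
with $t_0=0$, $t_{n+1}=1$, exactly the $\widetilde{t}$-parametrization of Example \ref{EXA:Bott-Tu-example}, and $\sum_j\widetilde{t_j}=1$. Substituting $d_M\omega_j=R_{\Delta,j}-\omega_j\wedge\omega_j$ then yields
\[
R(t)=\sum_{j=0}^n\widetilde{t_j}\,R_{\Delta,j}+\Big[\,\omega(t)\wedge\omega(t)-\sum_{j=0}^n\widetilde{t_j}\,\omega_j\wedge\omega_j\,\Big].
\]
The first sum is precisely the third sum of \eqref{EQU:R(ExDeltan)}, so the whole proposition reduces to the purely algebraic claim that the bracketed quadratic expression equals $\sum_{i,j=1}^n(t_it_j-t_{\min(i,j)})\theta_i\theta_j$.

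To prove this I would expand the bracket back in the $\theta$'s by writing $\omega_j=\omega_n+\Theta_j$ with $\Theta_j:=\sum_{k>j}\theta_k$. Using $\sum_j\widetilde{t_j}=1$ together with the partial-sum identity $\sum_{j=0}^{k-1}\widetilde{t_j}=t_k$, the $\omega_n^2$ term and both $\omega_n$-linear cross terms cancel between $\omega(t)\wedge\omega(t)$ and $\sum_j\widetilde{t_j}\,\omega_j\wedge\omega_j$, leaving
\[
\sum_{i,k=1}^n t_it_k\,\theta_i\theta_k-\sum_{j=0}^n\widetilde{t_j}\,\Theta_j\wedge\Theta_j.
\]
Expanding $\Theta_j\wedge\Theta_j=\sum_{k,l>j}\theta_k\theta_l$ and interchanging the order of summation, the more general partial-sum identity $\sum_{j<\min(k,l)}\widetilde{t_j}=t_{\min(k,l)}$ turns the second term into $\sum_{k,l}t_{\min(k,l)}\theta_k\theta_l$, producing exactly the missing second sum of \eqref{EQU:R(ExDeltan)}. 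I expect the combinatorial re-indexing via these partial sums of the $\widetilde{t_j}$, and the careful non-commutative bookkeeping of the $End(E_0)$-valued one-forms, to be the only delicate point: one must keep the two orderings $\theta_i\theta_j$ and $\theta_j\theta_i$ distinct, which is exactly why both appear in \eqref{EQU:R(ExDeltan)}. Everything else is the routine $R=d\omega+\omega\wedge\omega$ expansion, and the final expression being frame-independent promotes the local identity to the claimed global equality in $\Om^2(M\times|\Delta^n|,End(E_\Delta))$.
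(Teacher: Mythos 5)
Your proof is correct, but it is organized differently from the paper's. The paper stays frame-free and operator-theoretic: it expands $(\nabladelta)^2$ with $\nabladelta=d_{|\Delta^n|}+\nabla_{\Delta,n}+\sum_j t_j\theta_j$, and the whole weight of the argument falls on the identity \eqref{EQU:RDelta-n-as-minij}, obtained by expanding $\sum_{i,j}t_{\min(i,j)}\,\theta_i\theta_j$ back into the transferred connections and telescoping; the anticommutators $\nabla_{\Delta,n}\theta_j+\theta_j\nabla_{\Delta,n}$ appear as intermediate terms that cancel at the end. You instead pass to a local frame, write the simplex connection form as the convex combination $\omega(t)=\sum_j\widetilde{t_j}\,\omega_j$, and substitute $d_M\omega_j=R_{\Delta,j}-\omega_j\wedge\omega_j$; this makes the third sum of \eqref{EQU:R(ExDeltan)} appear immediately and isolates the second sum as a purely algebraic quadratic identity in the $\theta$'s, proved via the partial-sum identities $\sum_{j<k}\widetilde{t_j}=t_k$ and $\sum_{j<\min(k,l)}\widetilde{t_j}=t_{\min(k,l)}$. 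Your cancellation of the $\omega_n^2$ term and the $\omega_n$-linear cross terms checks out (it uses exactly $\sum_j\widetilde{t_j}=1$ and $\sum_j\widetilde{t_j}\Theta_j=\sum_k t_k\theta_k$), and the frame-independence argument legitimately promotes the local identity to a global one, since both sides of \eqref{EQU:R(ExDeltan)} are built from the tensorial objects $\theta_i$ and $R_{\Delta,j}$. What your route buys is transparency --- the coefficient $t_it_j-t_{\min(i,j)}$ is visibly the difference between the square of a sum and a weighted sum of squares of partial tails --- together with a direct link to the $\widetilde{t_j}$-parametrization already used in example \ref{EXA:Bott-Tu-example}; what it costs is the auxiliary choice of local frame, which the paper's computation avoids entirely.
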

The proof of this proposition is given in section \ref{SEC:R-proof} below. A particular case of interest is when all connections $\nabla_j$ are flat, i.e., when $R_{\Delta,j}=0$ for all $j$. Obviously, in this case, the last sum on the right of \eqref{EQU:R(ExDeltan)} vanishes.

\begin{remark}\label{RMK:expand-CH-as-MC-forms}
Substituting the curvature $R_\Delta$ from \eqref{EQU:R(ExDeltan)} into the expression \eqref{EQU:CH(M)_n(E)(e)} for $\Ch(M)_n(\mc E)$, and evaluating the $p$th powers ($p\geq 0$), we get an expression of the following form:
\begin{align}\label{EQU:CHunE-expanded-first}
\Ch(M)_n(\mc E)(e_{i_0,\dots, i_\ell})& =
\sum_{p\geq 0}  \frac{u^p}{p!\cdot (2\pi i)^p}\cdot
 \int_{|e_{i_0,\dots, i_\ell}|} \tr\Big(\big(R_\Delta\big|_{M\times |e_{i_0,\dots, i_\ell}|}\big)^p\Big)
\\ & = \nonumber
\sum_{p\geq 0}  \frac{u^p}{p!\cdot (2\pi i)^p}\cdot 
\sum_{j_1,\dots, j_p}
C_{n,(i_0,\dots, i_\ell)}^{j_1,\dots, j_p}
\cdot \tr\big(\omega_{1}\cdots \omega_{p}\big)
\end{align}
In more details, the second equality follows from the expansion of the exponential of the curvature $R_\Delta$ and taking the integral of the $\ell$-simplex $|e_{i_0,\dots, i_\ell}|$. Thus, each $\omega_k$ is a choice of $\theta_i$ or $\theta_i\theta_j$ or $R_{\Delta,j}$, appearing in \eqref{EQU:R(ExDeltan)}. The constant appearing in front of a specific term $\tr\big(\omega_{1}\cdots \omega_{p}\big)$ is an integral over an $\ell$-simplex $|e_{i_0,\dots, i_\ell}|$ of a product of $dt_i$ or $(t_i \cdot t_j-t_{\min(i,j)})$ or $(t_{j+1}-t_j)$ which correspond to the coefficients of each $\omega_i$ in \eqref{EQU:R(ExDeltan)}. To keep track which form $\omega_k$ is in the $k$th spot, we labeled these choice by $j_k$, and we note that the constant $C_{n,(i_0,\dots, i_\ell)}^{j_1,\dots, j_p}$ depends on which of these $\omega_k$ were used.

Since $C_{n,(i_0,\dots, i_\ell)}^{j_1,\dots, j_p}$ is an integral of a polynomial (in the $t_i$s) over a simplex, we see that this is in fact a rational number $C_{n,(i_0,\dots, i_\ell)}^{j_1,\dots, j_p}\in \Q$. For example, in the case where all curvatures $R_{\Delta,j}$ vanish, i.e., $R_{\Delta,j}=0$ for all $j$, these coefficients are integrals of the form
\begin{equation}
C_{n,(i_0,\dots, i_\ell)}^{\kappa_1,\dots, \kappa_p}=\pm \int_0^1 \int_0^{t_{\ell}}\dots \int_0^{t_2} t_1^{p_1}(t_1-1)^{q_1}\dots t_\ell^{p_\ell}(t_\ell-1)^{q_\ell} dt_1\dots dt_{\ell-1}dt_\ell
\end{equation}
for some non-negative integers $p_1,\dots,q_\ell$.

Note also that the coefficient $C_{n,(i_0,\dots, i_\ell)}^{j_1,\dots, j_p}$ is non-zero only when the integral over $|e_{i_0,\dots, i_\ell}|$ contains precisely one of each of the forms $dt_{i_1}, \dots dt_{i_\ell}$, which means that the $\omega_k$ include precisely one of the forms $\theta_{i_1},\dots, \theta_{i_\ell}$, since $dt_k$ only appears in the combination $dt_k\theta_k$ in \eqref{EQU:R(ExDeltan)}. This confirms lemma \ref{LEM:degrees-of-Chern-from-local}, that for a simplicial degree $\ell$, the de Rham forms are of degrees $\geq \ell$.

To be more precise, in the exponential expansion in \eqref{EQU:CHunE-expanded-first} for a term with a power of $p$, we have that  $\tr(\omega_{1}\cdots \omega_{p})\in \Om^{2p-\ell}(M,\C)$, since the $\omega_k$ consist of exactly $\ell$ many $1$-forms $\theta_j$ and $p-\ell\geq 0$ many $2$-forms (either $\theta_i\theta_j$ or $R_{\Delta,j}$). From this, we also note that this gives the expected total degree (see definition \ref{DEF:basic-CS-map}) of $||\Ch(M)_n(\mc E)(e_{i_0,\dots, i_\ell})||=-\ell$, since $|u|=-2$.
\end{remark}

Here are some examples for the expansion \eqref{EQU:CH(M)_n(E)(e)} in the above setting.
\begin{example}
On a vertex $e_{i_0}$, \eqref{EQU:CH(M)_n(E)(e)} becomes the Chern character with formal variable $u$ as in \eqref{EQU:ChuE-expanded}, i.e., $\Ch(M)_n(\mc E)(e_{i_0})=\sum_{p\geq 0} \frac{u^p}{p!\cdot (2\pi i)^p}\cdot \tr(R_{i_0}^p)$. In particular, when $R_{i_0}=0$, we get $\Ch(M)_n(\mc E)(e_{i_0})=\tr(Id)=\dim(E)$.
\end{example}

For the next two examples, it will be useful to state a naturality property of the expression for the curvature from \eqref{EQU:R(ExDeltan)} in terms of Maurer Cartan forms.
\begin{lemma}\label{LEM:R-on-ell-subsimplex}
Let $\vec{\mc E}= (\mc E_0\stackrel {f_1}\longrightarrow \dots \stackrel {f_n} \longrightarrow \mc E_n)\in \VB(M)_n$. Restricting the curvature $R_\Delta$ from \eqref{EQU:R(ExDeltan)} on $M\times |e_{0,\dots,n}|$ to $M\times |e_{i_0,\dots,i_\ell}|$ for some $0\leq i_0<\dots< i_\ell\leq n$ is given by the similar expression
\begin{equation}\label{EQU:R(Ex|e-ell|)}
R_{|e_{i_0,\dots,i_\ell}|}
=\sum_{j=1}^\ell d\tilde{t}_j \tilde{\theta}_j
+ \sum_{i,j=1}^\ell (\tilde{t}_i \cdot \tilde{t}_j-\tilde{t}_{\min(i,j)})\cdot \tilde{\theta}_i\tilde{\theta}_j
+\sum_{j=0}^{\ell}(\tilde{t}_{j+1}-\tilde{t}_j)\cdot \tilde{R}_{\Delta,j},
\end{equation}
where $\tilde{\theta}_j=\theta_{i_{j-1}+1}+\dots+\theta_{i_j}$ and $\tilde{R}_{\Delta,j}=R_{\Delta,i_j}$ are the forms and curvatures associated to 
\begin{equation}\label{EQU:E-vec-tilde}
\tilde{\vec{\mc E}}= (\mc E_{i_0}\stackrel {f_{i_1}\circ\dots \circ f_{i_0+1}}\longrightarrow \mc E_{i_1}\quad \longrightarrow\quad \dots \longrightarrow \quad \mc E_{i_{\ell-1}}\stackrel {f_{i_\ell}\circ\dots \circ f_{i_{\ell-1}+1}} \longrightarrow \mc E_{i_\ell}).
\end{equation}
transferred to $E_0$ via the bundle isomorphism $f_{i_0}\circ\dots \circ f_{1}:E_0\to E_{i_0}$.
\end{lemma}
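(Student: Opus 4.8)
The plan is to identify the restriction to a subcell with a simplicial pullback and then reuse the curvature computation already available. By \eqref{DEF:|e_i0...in|}, the embedding $|\Delta^\ell|\hookrightarrow|\Delta^n|$ with image $|e_{i_0,\dots,i_\ell}|$ is the geometric realization of the order-preserving injection $\delta\colon[\ell]\to[n]$, $p\mapsto i_p$ (the composite $\delta_{j_{n-\ell}}\circ\dots\circ\delta_{j_1}$). Consequently the restriction $R_\Delta|_{M\times|e_{i_0,\dots,i_\ell}|}$ is the curvature of $\delta^*(\vec{\mc E}_\Delta)=\BVB(M)_\delta(\vec{\mc E}_\Delta)=(id_M\times\delta)^*(\vec{\mc E}_\Delta)$. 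Now $\delta^*\vec{\mc E}$ in the nerve is exactly the subsequence $\tilde{\vec{\mc E}}$ of \eqref{EQU:E-vec-tilde}, with composite morphisms $\tilde f_p=f_{i_p}\circ\dots\circ f_{i_{p-1}+1}$, and Lemma \ref{LEM:VB-to-BVB} supplies an isomorphism of bundles with connection $F_\delta\colon(\delta^*\vec{\mc E})_\Delta\to\delta^*(\vec{\mc E}_\Delta)$. For an injective $\delta$ the repetition-of-vertex maps are trivial, so $F_\delta$ is precisely the transfer along $f_{i_0}\circ\dots\circ f_1\colon E_0\to E_{i_0}$ coming from the $\delta_0$-iterations.

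Applying Proposition \ref{PROP:R-from-MC-forms} verbatim to $\tilde{\vec{\mc E}}$ gives the curvature of $\tilde{\vec{\mc E}}_\Delta$ in the form \eqref{EQU:R(ExDeltan)} with $n$ replaced by $\ell$ and with the Maurer–Cartan forms and transferred curvatures of $\tilde{\vec{\mc E}}$; conjugating by $F_\delta$ puts everything on $E_0$, which is exactly the ``transferred to $E_0$'' convention in the statement, hence yields \eqref{EQU:R(Ex|e-ell|)}. It then remains to match names. Using $\tilde f_p\circ\dots\circ\tilde f_1=f_{i_p}\circ\dots\circ f_{i_0+1}$, the connection $\tilde\nabla_{\Delta,p}$ on $E_{i_0}$ transferred to $E_0$ along $f_{i_0}\circ\dots\circ f_1$ equals $\nabladelta_{,i_p}$, so $\tilde R_{\Delta,p}=R_{\Delta,i_p}$; and by the telescoping identity $\theta_{i_{p-1}+1}+\dots+\theta_{i_p}=\nabladelta_{,i_{p-1}}-\nabladelta_{,i_p}$ the Maurer–Cartan form of $\tilde{\vec{\mc E}}$ on $E_0$ is precisely $\tilde\theta_p=\theta_{i_{p-1}+1}+\dots+\theta_{i_p}$.

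For a self-contained check (and to see why the formula keeps its shape) I would alternatively substitute the affine embedding directly into \eqref{EQU:R(ExDeltan)}. Tracking the vertices through the coface maps shows that on the subcell one has $t_k=\tilde{t}_{P(k)}$, where $P(k)=\#\{m : i_m<k\}$ and we set $\tilde{t}_0=0$, $\tilde{t}_{\ell+1}=1$; equivalently $t_k=\tilde{t}_p$ for $i_{p-1}<k\le i_p$. Grouping the linear term by the level $p=P(k)$ and using $d\tilde{t}_0=d\tilde{t}_{\ell+1}=0$ collapses $\sum_k dt_k\,\theta_k$ to $\sum_{p=1}^\ell d\tilde{t}_p\,\tilde\theta_p$. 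In the third sum, $t_{j+1}-t_j$ vanishes unless $j=i_p$, where it equals $\tilde{t}_{p+1}-\tilde{t}_p$, producing $\sum_{p=0}^\ell(\tilde{t}_{p+1}-\tilde{t}_p)R_{\Delta,i_p}$.

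The step I expect to require the most care is the quadratic term. Substituting $t_it_j-t_{\min(i,j)}=\tilde{t}_{P(i)}\tilde{t}_{P(j)}-\tilde{t}_{\min(P(i),P(j))}$ (valid since $P$ is monotone, so $P(\min(i,j))=\min(P(i),P(j))$) and grouping by $(p,q)=(P(i),P(j))$, the boundary levels must drop out: when $p=0$ the coefficient is $\tilde{t}_0\tilde{t}_q-\tilde{t}_0=0$, and when $p=\ell+1$ it is $\tilde{t}_{\ell+1}\tilde{t}_q-\tilde{t}_q=0$ (and symmetrically in $q$). This is the one place where the precise conventions $\tilde{t}_0=0$ and $\tilde{t}_{\ell+1}=1$ are essential, and it is exactly what forces only the levels $1\le p,q\le\ell$ to survive, leaving $\sum_{p,q=1}^\ell(\tilde{t}_p\tilde{t}_q-\tilde{t}_{\min(p,q)})\tilde\theta_p\tilde\theta_q$ and reproducing \eqref{EQU:R(Ex|e-ell|)}.
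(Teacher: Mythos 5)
Your proposal is correct, and it in fact contains the paper's own argument as its second half: the ``self-contained check'' by direct substitution of $t_k=\tilde t_{P(k)}$ into \eqref{EQU:R(ExDeltan)} is exactly how the paper proves the lemma (the paper phrases the vanishing in the quadratic term as ``$t_it_j-t_{\min(i,j)}$ vanishes when $t_i=0$ or $t_j=1$,'' which is your boundary-level cancellation using $\tilde t_0=0$, $\tilde t_{\ell+1}=1$), and your bookkeeping of all three sums is accurate. Your first argument is a genuinely different and more structural route: you recognize the subcell inclusion as the realization of the injection $\delta\colon[\ell]\to[n]$, invoke Lemma \ref{LEM:VB-to-BVB} to identify $\delta^*(\vec{\mc E}_\Delta)$ with $(\delta^*\vec{\mc E})_\Delta$ up to the transfer along $f_{i_0}\circ\dots\circ f_1$, and then apply Proposition \ref{PROP:R-from-MC-forms} to the subsequence $\tilde{\vec{\mc E}}$, so that only the dictionary $\tilde\nabla_{\Delta,p}\leftrightarrow\nabladelta_{,i_p}$ and the telescoping identity $\tilde\theta_p=\theta_{i_{p-1}+1}+\dots+\theta_{i_p}$ remain to be checked. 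What this buys is that no new integral or coordinate computation is needed --- the ``similar expression'' is literally the same proposition applied to a shorter simplex --- at the cost of leaning on the (correct but only sketched-for-composites) naturality statement of Lemma \ref{LEM:VB-to-BVB}; the paper's direct substitution is more pedestrian but entirely self-contained and makes visible exactly where the conventions $\tilde t_0=0$, $\tilde t_{\ell+1}=1$ are used. Either argument alone would suffice.
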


Lemma \ref{LEM:R-on-ell-subsimplex} is proved in section \ref{SEC:R-naturality-proof} below. We next look at the Chern character applied to an edge $e_{i_0,i_1}$.
\begin{example}\label{EXA:Chern-on-edge-e01}
On an edge $e_{i_0,i_1}$, we use the previous lemma to write the curvature on $M\times |e_{i_0,i_1}|$ as the on given by the $1$-simplex $\tilde{\vec{\mc E}}= (\mc E_{i_0}\stackrel {f_{i_1}\circ\dots \circ f_{i_0+1}}\longrightarrow \mc E_{i_1})$. Then, in \eqref{EQU:CHunE-expanded-first} we only get one Maurer Cartan form $\tilde{\theta}_1=\theta_{i_0+1}+\dots+\theta_{i_1}$, and all other $\omega_{k}$ are either $\tilde{\theta}_1^2$, $\tilde{R}_{\Delta,0}$, or $\tilde{R}_{\Delta,1}$. Assuming flatness, i.e., $R_{i_0}=0$ and $R_{i_1}=0$, we obtain
\begin{align*}
\Ch(M)_n(\mc E)(e_{i_0,i_1})
&=\sum_{p\geq 1}  \frac{u^p}{p!\cdot (2\pi i)^p}\cdot \sum_{j_0=1,\dots, p} \int_0^1 (\tilde{t}_1^2-\tilde{t}_1)^{p-1} d\tilde{t}_1 \cdot \tr(\underbrace{\tilde{\theta}^2_1\dots\tilde{\theta}^2_1}_{j_0-1}\tilde{\theta}_1\underbrace{\tilde{\theta}^2_1\dots\tilde{\theta}^2_1}_{p-j_0})\\
&=\sum_{p\geq 1}  \frac{u^p}{p!\cdot (2\pi i)^p}\cdot p\cdot \frac{(-1)^{p-1} (p-1)!(p-1)!}{(2p-1)!} \cdot \tr(\tilde{\theta}_1^{2p-1})\\
&=\sum_{p\geq 1} u^p \frac{(-1)^{p-1}(p-1)!}{(2\pi i)^p (2p-1)! }\cdot \tr(\tilde{\theta}_1^{2p-1})
\end{align*}
Note, that this is just the Chern Simons form associated to the $1$-parameter family of connections $\nabla_t=d_{|\Delta^1|}+\nabla_{\Delta,i_{1}}+t\tilde{\theta}_1$, filtered via the additional factors of $u^p$; see e.g. \cite[Lemma 2.2]{TWZ}.
\end{example}

Our last example concerns the lowest $u$-term of the Chern character for a general $\ell$-simplex.
\begin{example}\label{EXA:lowest-Chern-symmetrized}
For any $\Ch(M)_n(\mc E)(e_{i_0,\dots, i_\ell})$, the lowest non-zero term (in $u$) is the one with $u^\ell$. We use the tilde-notation from lemma \ref{LEM:R-on-ell-subsimplex}. Then, on the right-hand side of equation \eqref{EQU:CHunE-expanded-first}, the integral over the $\ell$-simplex is non-zero only when we take a product of each of the $\ell$ many $2$-forms $d\tilde{t}_j \tilde{\theta}_j$ (from $R_\Delta$ in \eqref{EQU:R(ExDeltan)}) exactly once. The products of these $d\tilde{t}_j \tilde{\theta}_j$ can, however, appear in any order. Thus, it follows that the trace $\tr(\omega_1\dots \omega_\ell)$ in \eqref{EQU:CHunE-expanded-first} contains a product of each $\tilde{\theta}_j$, but in each possible order. Using that the volume of the $\ell$-simplex is $\int_{|\Delta^\ell|}d\tilde{t}_1\dots d\tilde{t}_\ell=\frac{1}{\ell!}$, we therefore obtain
\begin{equation}\label{EQU:CH-U-0...l-lowest-ul}
\Ch(M)_n(\mc E)(e_{i_0,\dots, i_\ell})=\frac{u^\ell}{\ell!(2\pi i)^\ell} \sum_{\sigma\in S_\ell}\frac{\sgn(\sigma)}{\ell!}\tr\big(\tilde{\theta}_{\sigma(1)}\dots \tilde{\theta}_{\sigma(\ell)}\big)+\sum_{p\geq \ell+1}u^{p}\cdot (\dots),
\end{equation}
where $S_\ell$ denotes the set of $\ell$-permutations.
\end{example}

\begin{remark}
Note, that the lowest $u^\ell$-term in \eqref{EQU:CH-U-0...l-lowest-ul} is a symmetrized version of the formula we had in our previous paper \cite[p. 1065 equation (2-1)]{GMTZ1} (with the minor difference of signs and factors of $2\pi i$). An analogous observation is made by Hosgood in \cite[Theorem 5.5.1]{Ho2}. The issue of providing a non-symmetrized version will be taken up in the next proposition \ref{PROP:Un-symmetrizing-Chern}.

Furthermore, we note that in contrast to \cite{GMTZ1}, using a non-vanishing de Rham differential in Definition \ref{DEF:basic-CS-map}, also requires to have non-vanishing higher $u$-terms $u^{\ell+1}, u^{\ell+2},\dots$. In this sense, ``turning on'' the de Rham differential $d$ in \cite{GMTZ1} will necessarily ``generate'' higher $u$-terms in the expansion of the Chern character map.
\end{remark}

There is a modified version of $\Ch$ which has the un-symmetrized terms as its lowest $u$-power.
\begin{proposition}\label{PROP:Un-symmetrizing-Chern}
There is a map of simplicial presheaves $\ChL: \VB \to \OMdR$ such that the lowest $u$-terms are given by:
\begin{equation}\label{EQU:CHL-lowest-ul}
\ChL(M)_n(\mc E)(e_{i_0,\dots, i_\ell})=\frac{u^\ell\cdot \sgn(\ell,\dots,1)}{\ell!(2\pi i)^\ell} \cdot \tr\big(\tilde{\theta}_{\ell}\dots \tilde{\theta}_{1}\big)+\sum_{p\geq \ell+1}u^{p}\cdot (\dots)
\end{equation}

Moreover, \eqref{EQU:CHL-lowest-ul} differs from $\Ch(M)_n(\mc E)(e_{i_0,\dots, i_\ell})$ only in the powers $u^\ell$ and $u^{\ell+1}$. This difference $(\ChL-\Ch)(M)_n(\mc E)\in {\Chain^-}(N(\Z\Delt{n}),\OdR^\bu(M)\ul)$ is a $(d-\sum_j (-1)^j\delta_j)$-exact chain map, which is given by sums of terms involving the de Rham $d$ of the trace of the Maurer Cartan forms $\tilde{\theta}_{1}, \dots, \tilde{\theta}_{\ell}$; see \eqref{EQU:def-of-ChL}, \eqref{EQU:d-on-b-ell}, and \eqref{EQU:delta-on-b-ell}.
\end{proposition}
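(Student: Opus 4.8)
The plan is to realize $\ChL$ as $\Ch$ plus an explicit coboundary for the total differential $D=d-\sum_j(-1)^j\delta_j$, where $d$ is the de Rham differential on the target $\OdR^\bu(M)\ul$ and $\sum_j(-1)^j\delta_j$ is the simplicial boundary on $N(\Z\Delt{n})$ whose compatibility with $d$ is the chain-map condition verified in \eqref{EQU:CHUE-chain-map-proof}. First I would define a natural, degree $-1$ element $\mf b$ by assigning to each cell a pure Maurer--Cartan trace
\begin{equation}\label{EQU:def-of-ChL}
\mf b(M)_n(\mc E)(e_{i_0,\dots,i_\ell}):=c_\ell\cdot u^{\ell+1}\cdot\tr\big(\text{product of the }\tilde\theta_1,\dots,\tilde\theta_\ell\big),\qquad \ChL:=\Ch+D\,\mf b,
\end{equation}
where the $\tilde\theta_j$ are the forms of Lemma \ref{LEM:R-on-ell-subsimplex} and $c_\ell\in\Q$ is a combinatorial constant fixed below; note that on a cell of simplicial degree $\ell$ this is a de Rham $(\ell+1)$-form, so that $\mf b$ lowers total degree by one, as a homotopy must. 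Since the $\tilde\theta_j$ are natural in $M$ and transform under faces and degeneracies as in Lemma \ref{LEM:R-on-ell-subsimplex}, the correction $D\mf b$ is a natural transformation; and because $\Ch$ is a chain map (Proposition \ref{PROP:CH:VBN-to-OM}) and $D^2=0$, the sum $\ChL$ is automatically a chain map, hence a map of simplicial presheaves $\VB\to\OMdR$. This disposes of the structural claims and reduces the proposition to computing the two pieces of $D\mf b$.

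The key point is that the two pieces of $D\mf b$ land in complementary powers of $u$. The face part evaluates $\mf b$ on the $(\ell-1)$-dimensional faces of $e_{i_0,\dots,i_\ell}$, each carrying weight $u^{(\ell-1)+1}=u^\ell$; by Lemma \ref{LEM:R-on-ell-subsimplex} the Maurer--Cartan forms merge as $\tilde\theta_j\mapsto\theta_{i_{j-1}+1}+\dots+\theta_{i_j}$ under these faces, so that
\begin{equation}\label{EQU:delta-on-b-ell}
\sum_j(-1)^j\delta_j\,\mf b(M)_n(\mc E)(e_{i_0,\dots,i_\ell})=u^{\ell}\cdot\big(\text{alternating sum of pure-}\tilde\theta\text{ traces with merged indices}\big).
\end{equation}
Using the graded cyclicity of the trace on the odd forms $\tilde\theta_j$, I would show this alternating sum collapses to exactly (single ordered trace) $-$ (antisymmetrized trace); this is the combinatorial identity underlying the passage from \eqref{EQU:CH-U-0...l-lowest-ul} to \eqref{EQU:CHL-lowest-ul}, and it fixes the constants $c_\ell$. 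The de Rham part is
\begin{equation}\label{EQU:d-on-b-ell}
d\,\mf b(M)_n(\mc E)(e_{i_0,\dots,i_\ell})=u^{\ell+1}\cdot d\,\tr\big(\text{product of the }\tilde\theta_j\big),
\end{equation}
which, after rewriting $d\tilde\theta_j$ through the structure equation for the transferred connections read off from \eqref{EQU:R(Ex|e-ell|)} (so that a curvature factor $\tilde R_{\Delta,j}$ or a quadratic term $\tilde\theta_i\tilde\theta_j$ replaces a $\tilde\theta$), stays entirely in weight $u^{\ell+1}$.

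Combining \eqref{EQU:delta-on-b-ell} and \eqref{EQU:d-on-b-ell}, the difference $(\ChL-\Ch)(M)_n(\mc E)(e_{i_0,\dots,i_\ell})=d\,\mf b-\sum_j(-1)^j\delta_j\,\mf b$ is supported in precisely the two weights $u^\ell$ and $u^{\ell+1}$: the $u^\ell$-part comes from the face term and realizes the un-symmetrization \eqref{EQU:CHL-lowest-ul}, while the $u^{\ell+1}$-part comes from the de Rham term. That no other power of $u$ occurs follows because $\mf b$ sits in the single weight $u^{\ell+1}$ on an $\ell$-cell, $d$ preserves the weight, and the face maps lower it by exactly one; the form-degree bookkeeping of Remark \ref{RMK:expand-CH-as-MC-forms} (a $u^p$-term on an $\ell$-cell has de Rham degree $2p-\ell$ and at most $p-\ell$ curvature factors) simultaneously guarantees that the $u^\ell$-part is pure in the $\tilde\theta_j$, as required by \eqref{EQU:CHL-lowest-ul}.

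The main obstacle will be the combinatorial and sign bookkeeping in \eqref{EQU:delta-on-b-ell}: one must choose the precise product of the $\tilde\theta_j$ and the constants $c_\ell$ so that the alternating sum of face-merged traces, simplified by graded cyclicity, produces the single ordered trace $\sgn(\ell,\dots,1)\tr(\tilde\theta_\ell\cdots\tilde\theta_1)$ minus the antisymmetrization of Example \ref{EXA:lowest-Chern-symmetrized}, with the correct coefficient $\frac1{\ell!(2\pi i)^\ell}$. A secondary check, routine given Remark \ref{RMK:expand-CH-as-MC-forms}, is that the structure-equation rewriting in \eqref{EQU:d-on-b-ell} keeps everything in weight $u^{\ell+1}$, so that $\ChL$ and $\Ch$ genuinely agree in all powers $u^p$ with $p\neq\ell,\ell+1$.
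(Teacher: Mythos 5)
Your overall framing agrees with the paper's: $\ChL$ is obtained from $\Ch$ by adding a correction that is exact for the total differential $D=d-\sum_j(-1)^j\delta_j$, built from traces of the Maurer--Cartan forms and supported in the weights $u^\ell$ and $u^{\ell+1}$, with the $d$-part staying in $u^{\ell+1}$ and the face part landing in $u^\ell$. But there is a genuine gap at the heart of the construction, namely the homotopy itself. Degree bookkeeping forces the homotopy on an $\ell$-cell to be a de Rham $(\ell+1)$-form with coefficient $u^{\ell+1}$ (total degree $-\ell-1$); your candidate $c_\ell\, u^{\ell+1}\tr(\tilde\theta_1\cdots\tilde\theta_\ell)$ is a product of $\ell$ one-forms, hence a de Rham $\ell$-form, so the parenthetical claim that it is an $(\ell+1)$-form is false and with it the claim that $\mf b$ lowers total degree by one. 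The paper's homotopy terms are instead $\bbb_\nu(\tilde\theta_1,\dots,\tilde\theta_\ell)=u^{\ell+1}\tr(\tilde\theta_{\nu(1)}\cdots\tilde\theta_{\nu(\ell+1)})$ for set maps $\nu:\{1,\dots,\ell+1\}\to\{1,\dots,\ell\}$, i.e.\ products of $\ell+1$ factors in which one Maurer--Cartan form necessarily repeats.

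More substantively, a single such term with a single constant $c_\ell$ cannot effect the de-symmetrization, and graded cyclicity of the trace is not the mechanism. The identity that converts the symmetrized sum \eqref{EQU:CH-U-0...l-lowest-ul} into the single ordered trace of \eqref{EQU:CHL-lowest-ul} is the telescoping identity of lemma \ref{LEM:sigma-permut-for-b}: for a permutation $\sigma$ and a position $j$, the alternating merged-index sum \eqref{EQU:Cech-diff-for-one-transpos} collapses to $(-1)^j$ times the sum of the two products in which the entries $j$ and $j+1$ appear in either order. One homotopy term therefore buys exactly one adjacent transposition; to move every $\sigma\in S_{\ell+1}$ to the reversal $(\ell+1,\dots,1)$ the paper must sum the maps ${\bf{b}}_{\sigma^{(k-1)}_{j(\sigma,k)}}$ over a chosen chain of adjacent transpositions for each $\sigma$, with the coefficients $\sgn(\sigma^{(k-1)})(-1)^{j(\sigma,k)+1}/((\ell+1)!(2\pi i)^{\ell+1})$ of \eqref{EQU:def-of-ChL}. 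This combinatorial construction is exactly what you defer to ``the main obstacle,'' so the proposal identifies the correct target but omits the argument that achieves it. The remaining structural points --- that adding a $D$-exact term preserves the chain-map property, the $u$-weight bookkeeping, and the naturality checks (including the degeneracy case, where an identity morphism gives a vanishing Maurer--Cartan form) --- are as you say and match the paper.
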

Proposition \ref{PROP:Un-symmetrizing-Chern} will be proved in section \ref{SEC:unsymmetrized-Ch-proof}. We note that in \eqref{EQU:CHL-lowest-ul} the sign of the permutation $(\ell,\ell-1,\dots, 2,1)$ is given by $\sgn(\ell,\dots,1)=(-1)^{\frac{\ell(\ell-1)}{2}}$.

\subsection{Application to holomorphic bundles with holomorphic connections}
We now show that the above Chern character map defined on complex vector bundles with connections ``factors'' to the holomorphic case. In particular, we obtain an extension of the trace of the exponentiated Atiyah class which is not only \v{C}ech-closed, but ($\partial$+\v{Cech})-closed.

We first establish some notation, and point out differences to the notation from \cite{GMTZ1}. 
\begin{definition}
Denote by $\HVBnabla(M)$ the category of holomorphic vector bundles with holomorphic connection over a complex manifold $M$ and morphisms consist of holomorphic bundle isomorphisms over the identity of $M$, which do not have to respect the connections. Moreover, denote by $\HVB(M)=\Nerve(\HVBnabla(M))$ the nerve of this category; cf. definition \ref{DEF:vb-nabla-VB-nabla} and \cite[Definition 2.2]{GMTZ1} (where we wrote ${\bf HVB}(M)$ instead of $\HVB(M)$).

From the definition of a holomorphic vector bundle $E\to M$ of rank $d$, we can always find an open cover $\mc U=\{U_i\}_{i\in I}$ of $M$ and trivializations $s_i:E_i:=\C^d\times U_i \stackrel\cong\to E|_{U_i}$ whose transition functions $g_{i,j}:=(s_i^{-1}|_{U_{i,j}})\circ (s_j|_{U_{i,j}}):E_j|_{U_{i,j}}\to E|_{U_{i,j}}\to E_i|_{U_{i,j}}$ (satisfying $g_{i,i}=id$ and the cocycle conditions $g_{i,j}\circ g_{j,k}=g_{i,k}$ on $U_{i,j,k}$) are holomorphic functions interpreted as $g_{ij}:U_{i,j}\to GL(d,\C)$, i.e., $\delbar(g_{i,j})=0$. 

A connection on $E$ can be locally given by holomorphic $1$-forms $A_i\in \Om^1_{hol}(U_i,\C^{d\times d})$ for each open set $U_i$ satisfying a compatibility condition $A_i=g_{i,j}\cdot A_j\cdot g_{j,i}+g_{i,j}\cdot \del(g_{j,i})$, and with this data, we have the holomorphic connection $\nabla$ on $E$ given locally by $\nabla|_{U_i}=\del+A_i$. 

We note that, while the $\delbar$ operators on the $U_i$s glue to a global $\delbar$ operator on $E$ (since $\delbar(g_{i,j})=0$ implies $g_{i,j}^{-1}\circ \delbar\circ g_{i,j}=\delbar$), the $\del$ operators on the $U_i$s do, in general, \emph{not} glue to a global $\del$ on $E$ (they only glue in the flat case where $\del(g_{i,j})=0$). In fact, in general, there may not exist a global connection on $E$ (---the usual construction of a global smooth connection using a partition of unity will not give a \emph{holomorphic} connection). We note that the Atiyah class is an obstruction to the existence of a holomorphic connection; see \cite[p. 188, Theorem 2]{A}. For a holomorphic vector bundle $E$, the Atiyah class may be represented via $a(E)=\{ (s_i|_{U_{i,j}})\circ \del(g_{i,j})\circ (s^{-1}_j|_{U_{i,j}})\}_{i,j}\in \vC^1(\mc U,End(E)\otimes \Om^1_{hol})$, since the \v{C}ech $\delta$-cohomology class $[a(E)]=0$ iff $\forall i: \exists B_i\in \Om^1_{hol}(U_{i}, End(E)): B_j-B_i=s_i\circ \del(g_{i,j})\circ s^{-1}_j$ on $U_{i,j}$, which,  setting $A_i=s_i^{-1}\circ B_i\circ s_i$, is equivalent to $s_j\circ A_j\circ s_j^{-1}-s_i\circ A_i\circ s_i^{-1}=s_i\circ \del(g_{i,j})\circ s^{-1}_j$, or $g_{i,j}\cdot A_j\cdot g_{j,i}-A_i= \del(g_{i,j})\cdot g_{j,i}=-g_{i,j}\cdot \del(g_{j,i})$ (using that $g_{i,j}\cdot g_{j,i}=id\implies \del(g_{i,j})\cdot g_{j,i}+g_{i,j}\cdot \del(g_{j,i})=0$ in the last equality).

We also note, that a holomorphic connection is not a special case of a smooth connection, since those are locally given by $d+A_i$ where $d=\del+\delbar$ and where the $A_i\in \OdR^1(U_i,\C^{d\times d})$ satisfy the condition $A_i=g_{i,j}\cdot A_j\cdot g_{j,i}+g_{i,j}\cdot d(g_{i,j})$. However, to a holomorphic connection given by $(\del+A_i)$s, we can associate the smooth connection given by $(\del+\delbar+A_i)$s by adding the $\delbar$ operator (the condition for the $A_i$s from the holomorphic setting implies the one for the smooth setting since $\delbar(g_{i,j})=0$). For a complex manifold $M$, this gives a functor $\HVBnabla(M)\to \vbn(M), (E\to M,\nabla)\mapsto(E\to M, \nabla+\delbar)$, which induces a functor on the nerve, $\HVB(M)\to \VB(M)$. Denoting $\VB$ by slight abuse of notation for the restriction to the site of complex manifolds $\VB:\CMan^{op}\to \sSet$, we get a map of simplicial presheaves $\HVB\to \VB$.
\end{definition}

\begin{definition}\label{DEF:Omega-hol}
For a complex manifold $M$, we denote by $\Omdelhol \bu (M)$ the holomorphic forms on $M$ which we make into a chain complex with the differential $\del$. In contrast to this, we denote by $\Omhol \bu (M)$ the holomorphic forms on $M$ which we make into a chain complex with the zero differential (in \cite[Definition 2.3]{GMTZ1} we simply wrote $\Om^\bu_{hol}(M)$ for $\Omhol \bu (M)$).

Composing this with the functors $\quot$, $\DK$, and $\mc F$ from definition \ref{DEF:OMdR}, we define
\begin{align*}
\OMdelhol & :\CMan^{op}
\stackrel{\Odelhol^\bu(-)}{\longrightarrow}\Chain^+
\stackrel{\quot}{\longrightarrow}\Chain^-
\stackrel{\DK}{\longrightarrow}\sAb
\stackrel{\mc F}{\longrightarrow}\sSet
\\
\OMdelhol(M) & =\DKSet(\Odelhol^\bu(M)\ul)={\Chain^-}(N(\Z\Delt{\bu}),\Odelhol^\bu(M)\ul)
\end{align*}
(In \cite[Definition 2.3]{GMTZ1} we simply wrote $\OMhol$ for the composition $\OMhol :\CMan^{op}\stackrel{\Ohol^\bu(-)}{\longrightarrow}\Chain^+ \stackrel{\quot}{\longrightarrow}\Chain^- \stackrel{\DK}{\longrightarrow}\sAb \stackrel{\mc F}{\longrightarrow}\sSet$.)

Since the inclusion $\Omdelhol \bu (M)\hookrightarrow \OdR ^\bu(M)$ is a chain map (since $\del(\om)=d(\om)$ when $\delbar(\om)=0$), we get an induced map of simplicial presheaves $\OMdelhol\to \OMdR$, where we again used a slight abuse of notation to denote $\OMdR:\CMan^{op}\to \sSet$ for the restriction of definition \ref{DEF:OMdR} to the site of complex manifolds.
\end{definition}

\begin{proposition}\label{PROP:Ch->-factors-to-hol}
The Chern character function factors through holomorphic forms, i.e., there exists a map of simplicial presheaves $\ChLhol$ making the following diagram commute:
\begin{equation}
\xymatrix{ \VB \ar^{\ChL}[rr]  && \OMdR   \\ 
\HVB \ar@{.>}[rr]^{\ChLhol} \ar[u]^{} && \OMdelhol \ar[u]^{}}
\end{equation}
\end{proposition}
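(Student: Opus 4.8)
The plan is to show that, after restricting $\ChL$ along the left vertical functor $\HVB\to\VB$, its values already lie inside the subpresheaf $\OMdelhol\hookrightarrow\OMdR$, and then to define $\ChLhol$ by corestriction. Recall that the functor $\HVB(M)\to\VB(M)$ sends an $n$-simplex $\vec{\mc E}=(\mc E_0\xrightarrow{f_1}\dots\xrightarrow{f_n}\mc E_n)$, with each $\nabla_j$ a holomorphic connection (locally $\del+A_j$, $A_j$ holomorphic) and each $f_j$ a holomorphic isomorphism, to the same underlying bundles and isomorphisms but with each connection replaced by the smooth connection $\nabla_j+\delbar$, locally $\del+\delbar+A_j=d+A_j$. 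So the task is to check that the output of $\ChL$ on this image is a \emph{holomorphic} form on $M$ and that the factorization is compatible with the differential and simplicial structures.

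The key step, which I expect to be the main obstacle, is to verify that passing to the smooth connection $\nabla_j+\delbar$ introduces no anti-holomorphic components into the data entering the curvature formula \eqref{EQU:R(ExDeltan)}. This rests on two cancellations, both using holomorphicity. First, since $\delbar(f_j)=0$, conjugation by $f_j$ preserves $\delbar$, i.e.\ $f_j^{-1}\circ\delbar\circ f_j=\delbar$; hence each smooth transferred connection is $\nabladelta_{,j}+\delbar$, where $\nabladelta_{,j}$ is the transferred \emph{holomorphic} connection of \eqref{EQU:nabdladeltaj}, and the Maurer--Cartan forms $\theta_j=\nabladelta_{,j-1}-\nabladelta_{,j}$ of \eqref{EQU:theta_j-from-f_j} have their $\delbar$-contributions cancel, leaving holomorphic $1$-forms valued in $End(E_0)$. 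Second, the smooth curvature of $\nabla_j+\delbar$ equals the holomorphic curvature: locally $(\del+\delbar+A_j)^2=\del A_j+A_j\wedge A_j=\nabla_j^2$, because $\{\del,\delbar\}=0$, $\delbar^2=0$, and $\delbar A_j=0$ kill all mixed terms; conjugating by the holomorphic $f$'s then shows each $R_{\Delta,j}$ is a holomorphic $2$-form. In particular no $(1,1)$ (Atiyah-class) term appears, consistent with $\vec{\mc E}$ carrying a genuine holomorphic connection.

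Granting these cancellations, Proposition \ref{PROP:R-from-MC-forms} and Lemma \ref{LEM:R-on-ell-subsimplex} express $R_\Delta$ (restricted to any subcell) as a sum of holomorphic forms on $M$ wedged with the simplex forms $dt_j$ and polynomials in the $t_j$. Tracing products and integrating over the simplex fiber as in \eqref{EQU:CH(M)_n(E)(e)} integrates out the $dt$'s, producing a rational multiple of a holomorphic form; thus each value lands in $\Odelhol^\bu(M)\ul\subseteq\OdR^\bu(M)\ul$. For the un-symmetrized map $\ChL$, the correction terms of Proposition \ref{PROP:Un-symmetrizing-Chern} are built from the de Rham $d$ of traces of the forms $\tilde\theta_j$; since these $\tilde\theta_j$ are holomorphic and $d$ agrees with $\del$ on holomorphic forms, the corrections remain holomorphic as well.

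Finally I would verify the factorization is a map of simplicial presheaves. By Definition \ref{DEF:Omega-hol} the inclusion $\Odelhol^\bu(M)\hookrightarrow\OdR^\bu(M)$ is a chain map, since $d\omega=\del\omega$ whenever $\delbar\omega=0$; hence the face conditions defining a simplex in $\DKSet$ computed with $d$ in $\OMdR$ coincide with those computed with $\del$ in $\OMdelhol$, and $\ChLhol(M)_n(\vec{\mc E}):=\ChL(M)_n$ of the image of $\vec{\mc E}$ is a well-defined element of $\OMdelhol(M)_n={\Chain^-}(N(\Z\Delt{n}),\Odelhol^\bu(M)\ul)$. Because the right vertical map $\OMdelhol\to\OMdR$ is a levelwise monomorphism and $\ChL$ together with both vertical maps are already maps of simplicial presheaves, the corestriction $\ChLhol$ is the unique factorization through this monomorphism and therefore automatically commutes with faces, degeneracies, and restriction along smooth maps, making the required square commute.
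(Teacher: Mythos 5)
Your proposal is correct and follows essentially the same route as the paper: both reduce the claim to the observation that the Maurer--Cartan forms $\theta_j$ (and hence their traces) are holomorphic, that $d=\del$ on holomorphic forms so the $\mathbf{b}_\nu$-corrections of proposition \ref{PROP:Un-symmetrizing-Chern} stay holomorphic, and that the factorization through the levelwise inclusion $\OMdelhol\hookrightarrow\OMdR$ is then automatic. You supply one detail the paper's proof elides, namely the explicit check that the curvature terms $R_{\Delta,j}$ of the associated smooth connections $\nabla_j+\delbar$ are themselves holomorphic $2$-forms, which is a worthwhile addition since the expansion \eqref{EQU:CHunE-expanded-first} involves these terms and not only the $\theta_j$.
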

\begin{proof}
We need to check that the composition $\HVB\to \VB\stackrel {\ChL}\to\OMdR$ lands in holomorphic forms. However, the output under the Chern character map $\Ch$ in \eqref{EQU:CHunE-expanded-first} only involves traces the Maurer Cartan forms $\theta_j$, and according to  proposition \ref{PROP:Un-symmetrizing-Chern}, for $\ChL$ we have traces and the de Rham $d$ applied to these forms. However, in the holomorphic setting, these Maurer Cartan forms $\theta_j$ are holomorphic, since they are given by holomorphic expressions \eqref{EQU:theta_j-from-f_j}, and so are their traces, etc. ($d(\om)=\del(\om)$ for holomorphic $\om$). Thus, the outcome for $\ChL(M)_n(\mc E)(e_{i_0,\dots, i_\ell})$ in proposition \ref{PROP:Un-symmetrizing-Chern} is also holomorphic, which is what we needed to show.
\end{proof}

We can also apply the totalization from section \ref{SEC:Tot-of-Chern-CW-BT} to the above map $\ChLhol$ for a given open cover $\mc U$. We get an induced map of simplicial sets
\begin{equation}
\Tot(\ChLhol(\NU)): \Tot(\HVB(\NU)) \to \Tot(\OMdelhol(\NU))
\end{equation}

\begin{remark} \label{REM:smooth-del-or-no-del}
Just as in proposition \ref{PROP:Tot(Omega)} and \cite[Prop. 3.13]{GMTZ1}, the totalization gives a map $\Tot(\OMdelhol(\NU))\to \DKSet(\vC^\bu(\mc U,\Odelhol^\bu)[u]^{\leq 0})$. For simplicial degree $0$ we get  the output $\DKSet(\vC^\bu(\mc U,\Odelhol^\bu)[u]^{\leq 0})_0\cong \vC^\bu(\mc U,\Odelhol^\bu)^{even}$, while two $0$-simplices connected by a $1$-simplex give forms whose difference is exact. Here, the corresponding differential is the \v{C}ech-$\del$ differential, since we included the $\del$-differential in the holomorphic forms $\Odelhol^\bu(M)$ in definition \ref{DEF:Omega-hol}. The output from the Chern character can thus be understood as an element in $H^\bu(\vC^\bu(\mc U,\Odelhol^\bu),\delta+\del)$, which is in fact isomorphic to de Rham cohomology $H^\bu_{\text{dR}}(M,\C)$; see \cite[p. 448]{GH}.

If, however, we would have taken holomorphic forms $\Omhol \bu (M)$ with zero differential, we would have ended in the complex $\vC^\bu(\mc U,\Omhol \bu)$ with $\delta+0=\delta$ differential. This is in fact the setup we studied in \cite{GMTZ1} using only the ``diagonal'' terms of the Chern character map, i.e., using only terms with equal \v{C}ech and holomorphic degree. Unlike the smooth case, where a zero differential does not yield an interesting cohomology (see remark \ref{REM:smooth-d-or-no-d}), the cohomology $H^\bu(\vC^\bu(\mc U,\Omhol \bu),\delta)$ is Hodge cohomology (\cite{OTT}), and which is isomorphic to Dolbeault cohomology $H^{\bu,\bu}_{\delbar}(M)$ via the Dolbeault theorem (see e.g. \cite[p. 45]{GH}).
\end{remark}

\begin{remark}\label{REM:lowest-terms-extended}
Another benefit of applying the totalization is that it allows us to write our starting data in terms of local data of the manifold (as e.g. stated in proposition \ref{PROP:Tot(VB)01}). This was done for example in  the smooth case in \ref{EXA:Bott-Tu-example}, where we recovered the formulas by Bott and Tu. In the holomorphic case we recover an extension of the formulas of the Chern character map from \cite{GMTZ1}, but now with the $\del$-differential ``turned on''. More precisely, for a cover $\mc U$ of $M$, applying $\Tot(\ChLhol(\NU))$ to a $0$-simplex given by a holomorphic vector bundle $E\to M$ and a choice of holomorphic connections $\nabla_i$ on each $E_i=E|_{U_i}\to U_i$ (just as in proposition \ref{PROP:Tot(VB)01}) the sequence of bundles $((E_{i_0},\nabla_{i_0})\stackrel {g_{i_1,i_0}}\longrightarrow \dots \stackrel {g_{i_n,i_{n-1}}} \longrightarrow (E_{i_n},\nabla_{i_n}))$ over $U_{i_0,\dots, i_n}$ yields the $1$-forms $\tilde{\theta}_j=g_{i_{0},i_j}\circ \nabla(g_{i_j,i_{j-1}})\circ g_{i_{j-1},i_0}$ from \eqref{EQU:theta_j-from-f_j}, and with this \eqref{EQU:CHL-lowest-ul} becomes an extension of the trace of the exponentiated Atiyah class which is $(\delta+\del)$-closed:
\begin{equation}\label{EQU:Atiyah-extended-with-del}
 \frac{ u^n}{n!(2\pi i)^n} \cdot (-1)^{\frac{n(n-1)}{2}}\cdot\tr\big(g_{i_0,i_n}\circ \nabla(g_{i_n,i_{n-1}})\circ\dots \circ \nabla(g_{i_1,i_{0}})\big)+\sum_{p\geq n+1} u^{p}\cdot (\dots)
\end{equation}

In fact, one of our goals is to obtain a similar description of the Chern character for coherent sheaves as studied by O'Brian Toledo and Tong in \cite{OTT} expressed in terms of local data. We have already done this in case without $\del$ in \cite{GMTZ2}, and this paper will hopefully give a step toward giving a full description, which allows us to ``turn on the $\del$'' for infinity vector bundles as well.

Finally, we mention that the application of the totalization automatically gives invariants of higher structures, such as morphisms, and higher morphisms as well. For example, a map of bundles gives a $1$-simplex in the totalization (cf. proposition \ref{PROP:Tot(VB)01} in the setup from example \ref{EXA:Chern-Weil-example}), which yields a Chern-Simon type invariant (cf. example \ref{EXA:Chern-on-edge-e01}). However, there are also higher simplicies in the totalization, which correspond to certain higher structures for a bundle.
\end{remark}

\begin{example}\label{EXA:Kahler-E1=Einfinity}
Frölicher showed in \cite{F} that there is a spectral sequence whose $E_1$-page is isomorphic to Dolbeault cohomology, and whose $E_\infty$-page is isomorphic to de Rham cohomology. If $M$ is a compact K\"ahler manifold, then \cite{F} showed that this spectral sequence degenerates at the $E_1$-page (see also \cite[p. 444]{GH}).

Similarly, taking the double complex $\vC^\bu(\mc U,\Odelhol^\bu)$ with $\del$ and $\delta$ differentials with appropriate filtration (by form-degree), gives a spectral sequence whose $E_1$-page is isomorphic to Hodge cohomology (which is isomorphic to Dolbeault cohomology), and whose $E_\infty$-page is isomorphic to de Rham cohomology; see \cite[p. 448]{GH}. Thus, when $M$ is compact and K\"ahler, the spectral sequence degenerates at the $E_1$-page. In particular, the $u^n$-terms (for $n\geq 0$) in the expansion \eqref{EQU:Atiyah-extended-with-del} are $\delta$-closed, which are a representative for a class in Hodge cohomology (akin to the class we used in \cite[(2-1)]{GMTZ1}). These $u^n$-terms are extended in the full sum \eqref{EQU:Atiyah-extended-with-del} to give a $(\del+\delta)$-closed representative for the Chern character in de Rham cohomology. It follows that, in the compact K\"ahler case, the identification of Hodge and de Rham cohomology also identifies these classes with each other.\end{example}

\subsection{Proofs of proposition \ref{PROP:R-from-MC-forms}, lemma \ref{LEM:R-on-ell-subsimplex}, and proposition \ref{PROP:Un-symmetrizing-Chern}}\label{SEC:Proof-of-PROPs}

We complete this section by providing the missing proofs for the stated propositions and lemma.

\subsubsection{Proof of Proposition \ref{PROP:R-from-MC-forms}}\label{SEC:R-proof}
\begin{proof}
Since $d_{|\Delta^n|}^2=0$ and $d_{|\Delta^n|}\circ \nabla_{\Delta,n}+\nabla_{\Delta,n} \circ d_{|\Delta^n|}=0$, we get
\begin{multline}\label{EQU:R{ExDelta}-first-eval}
R_\Delta
=(\nabla_\Delta)^2
=\Big(d_{|\Delta^n|}+\nabla_{\Delta,n}+\sum_{j=1}^n t_j\cdot \theta_j\Big)^2
\\
=\Big(d_{|\Delta^n|}\circ \sum_{j=1}^n t_j \theta_j+\sum_{j=1}^n t_j \theta_j \circ d_{|\Delta^n|}\Big)+\nabla_{\Delta,n}^2+\Big(\nabla_{\Delta,n}\circ \sum_{j=1}^n t_j \theta_j+\sum_{j=1}^n t_j \theta_j \circ \nabla_{\Delta,n}\Big)+\Big(\sum_{j=1}^n t_j\cdot \theta_j\Big)^2
\\
=\sum_{j=1}^n dt_j \theta_j+R_{\Delta,n}+\sum_{j=1}^n t_j\cdot\Big(\nabla_{\Delta,n}\theta_j+\theta_j\nabla_{\Delta,n}\Big)+\sum_{i,j=1}^n t_i t_j \theta_i \theta_j.
\end{multline}
We next calculate $\sum_{i,j=1}^n t_{\min}\theta_i \theta_j$, where $\min=\min(i,j)$.
\begin{align*}
&\sum_{i,j=1}^n t_{\min}\theta_i \theta_j 
=\sum_{j=1}^n t_j\cdot \Big(\theta_j\theta_j+\sum_{i=j+1}^n \theta_i\theta_j+\sum_{i=j+1}^n \theta_j\theta_i\Big)
\\
&
=\sum_{j=1}^n t_j\cdot \Big((\nabla_{\Delta,j-1}-\nabla_{\Delta,j})(\nabla_{\Delta,j-1}-\nabla_{\Delta,j})+\sum_{i=j+1}^n (\nabla_{\Delta,i-1}-\nabla_{\Delta,i})\theta_j+\sum_{i=j+1}^n \theta_j(\nabla_{\Delta,i-1}-\nabla_{\Delta,i})\Big)
\\
&
=\sum_{j=1}^n t_j\cdot \Big(R_{\Delta,j-1}-\nabla_{\Delta,j-1}\nabla_{\Delta,j}-\nabla_{\Delta,j}\nabla_{\Delta,j-1}+R_{\Delta,j}+
\\
& \hspace{2cm}
\underbrace{\nabla_{\Delta,j}\theta_j}_{=\nabla_{\Delta,j}\nabla_{\Delta,j-1}-\nabla_{\Delta,j}\nabla_{\Delta,j}}-\nabla_{\Delta,n}\theta_j+\underbrace{\theta_j\nabla_{\Delta,j}}_{=\nabla_{\Delta,j-1}\nabla_{\Delta,j}-\nabla_{\Delta,j}\nabla_{\Delta,j}}-\theta_j\nabla_{\Delta,n} \Big)
\\
&
=\sum_{j=1}^n t_j\cdot \Big(R_{\Delta,j-1}+R_{\Delta,j}-R_{\Delta,j}-\nabla_{\Delta,n}\theta_j-R_{\Delta,j}-\theta_j\nabla_{\Delta,n} \Big)
\\
&
=\sum_{j=1}^n t_j\cdot (R_{\Delta,j-1}-R_{\Delta,j})-\sum_{j=1}^n t_j\cdot (\nabla_{\Delta,n}\theta_j+\theta_j\nabla_{\Delta,n} ).
\end{align*}
Thus, rearranging the sides, and using the notation $t_0=0$ and $t_{n+1}=1$, we get 
\begin{eqnarray}\label{EQU:RDelta-n-as-minij}
&\hspace{6mm} R_{\Delta,n}+\sum_{j=1}^n t_j\cdot (\nabla_{\Delta,n}\theta_j+\theta_j\nabla_{\Delta,n} )
& =R_{\Delta,n}+\sum_{j=1}^n t_j\cdot (R_{\Delta,j-1}-R_{\Delta,j})-\sum_{i,j=1}^n t_{\min}\theta_i \theta_j
\\
\nonumber && =\sum_{j=0}^{n}(t_{j+1}-t_j)\cdot R_{\Delta,j}-\sum_{i,j=1}^n t_{\min}\theta_i \theta_j.
\end{eqnarray}
Plugging \eqref{EQU:RDelta-n-as-minij} into equation \eqref{EQU:R{ExDelta}-first-eval}, we obtain the claimed result \eqref{EQU:R(ExDeltan)}.
\end{proof}

\subsubsection{Prove of lemma \ref{LEM:R-on-ell-subsimplex}}\label{SEC:R-naturality-proof}
\begin{proof}
We need to pull back the curvature under the inclusion map $|e_{i_0,\dots,i_\ell}|\hookrightarrow |e_{0,\dots,n}|$, $(\tilde{t}_1,\dots,\tilde{t}_\ell)\mapsto (t_1,\dots,t_n)=(\underbrace{0,\dots,0}_{i_0\text{ many}},\underbrace{\tilde{t}_1,\dots,\tilde{t}_1}_{i_1-i_0\text{ many}},\underbrace{\tilde{t}_2,\dots,\tilde{t}_2}_{i_2-i_1\text{ many}},\dots,\underbrace{\tilde{t}_\ell,\dots,\tilde{t}_\ell}_{i_\ell-i_{\ell-1}\text{ many}},\underbrace{1,\dots,1}_{n-i_\ell\text{ many}})$, which means that $t_{i_{j-1}+1}=\dots=t_{i_j}=\tilde{t}_j$; compare notation \ref{NOTATION:simplicial-sets}. Then, the terms in the first sum of \eqref{EQU:R(ExDeltan)} are $dt_{i_{j-1}+1}\theta_{i_{j-1}+1}+\dots+dt_{i_{j}}\theta_{i_{j}}=d\tilde{t}_j(\theta_{i_{j-1}+1}+\dots+\theta_{i_{j}})=d\tilde{t}_j\tilde{\theta}_j$, which give the stated terms in the first sum of \eqref{EQU:R(Ex|e-ell|)}. For the middle sum of  \eqref{EQU:R(ExDeltan)}, note that for, say $i<j$, $t_it_j-t_i$ vanishes when $t_i=0$ or $t_j=1$. For the remaining terms, note that
\[
\sum_{i=i_{r-1}+1}^{i_r}\sum_{j=i_{s-1}+1}^{i_s} ({t}_i \cdot {t}_j-{t}_{\min(i,j)})\cdot {\theta}_i{\theta}_j
=
(\tilde{t}_r \cdot \tilde{t}_s-\tilde{t}_{\min(r,s)})\cdot \sum_{i=i_{r-1}+1}^{i_r}\sum_{j=i_{s-1}+1}^{i_s}  {\theta}_i{\theta}_j
\]
which gives the terms $(\tilde{t}_r \cdot \tilde{t}_s-\tilde{t}_{\min(r,s)})\cdot \tilde{\theta}_r\tilde{\theta}_s$ in the second sum of \eqref{EQU:R(Ex|e-ell|)}. For the last sum of  \eqref{EQU:R(ExDeltan)}, we note that ${t}_{j+1}-{t}_j$ vanishes for all $j\neq i_k$; in the case where $j\neq i_k$ for some $k$, we get $({t}_{i_k+1}-{t}_{i_k})\cdot R_{\Delta,i_k}=(\tilde{t}_{k+1}-\tilde{t}_{k})\cdot \tilde{R}_{\Delta,k}$.

Finally, we note that $\theta_{i_{j-1}+1}+\dots+\theta_{i_j}$ can be rewritten using \eqref{EQU:theta_j-from-f_j} and the chain rule $\nabla(g\circ h)=\nabla(g)\circ h+g\circ \nabla(h)$ as a pullback of the Maurer Cartan forms coming from \eqref{EQU:E-vec-tilde}.
\end{proof}

\subsubsection{Proof of proposition \ref{PROP:Un-symmetrizing-Chern}}\label{SEC:unsymmetrized-Ch-proof}

The remainder of this section concerns the proof of proposition \ref{PROP:Un-symmetrizing-Chern}, that is, we show how to modify the Chern character map so that the lowest $u^\ell$-component (described in \eqref{EQU:CH-U-0...l-lowest-ul}) becomes ``non-symmetrized'', i.e., so that we will only have a trace of $\tr\big(\theta_{\ell}\dots \theta_{1})$ instead of a sum.  The idea for being able to rearrange the order of the $\theta_i$s is based on the following lemma.
\begin{lemma}\label{LEM:sigma-permut-for-b}
Let $\sigma\in S_{\ell+1}$ be an $(\ell+1)$-permutation, and let $1\leq j\leq \ell$. Denote by $\sigma_j:\{1,\dots, \ell+1\}\to \{1, \dots, \ell\}$, $\sigma_j(k)=\sigma(k)$ if $\sigma(k)\leq j$, and $\sigma_j(k)=\sigma(k)-1$ if $\sigma(k)> j$. For example, for $\sigma=(4,3,1,5,2)\in S_5$, and $j=2$, we have $\sigma_2=(3,2,1,4,2)$.

Now, for any endomorphism-valued $1$-forms $\phi_i$ for $i=1,\dots, \ell$, we denote by
\[
b_{\sigma_j}(\phi_1,\dots, \phi_{\ell}):=\phi_{\sigma_j(1)}\cdot \ldots\cdot \phi_{\sigma_j(\ell+1)}.
\]
Then, for any endomorphism-valued $1$-forms $\theta_i$, where $i=1,\dots, \ell+1$, we have:
\begin{multline}\label{EQU:Cech-diff-for-one-transpos}
b_{\sigma_j}(\theta_{2},\dots, \theta_{\ell+1})+
\sum_{i=1}^{\ell} (-1)^i\cdot b_{\sigma_j}(\theta_{1},\dots, \theta_{i}+\theta_{i+1},\dots, \theta_{\ell+1})
+(-1)^{\ell+1}\cdot b_{\sigma_j}(\theta_{1},\dots, \theta_{\ell})
\\  
 =(-1)^j\cdot \Big((\theta_{\sigma(1)}\dots \theta_{j}\dots \theta_{j+1}\dots \theta_{\sigma(\ell+1)})+(
\theta_{\sigma(1)}\dots \theta_{j+1}\dots \theta_{j}\dots \theta_{\sigma(\ell+1)})\Big),
\end{multline}
where on the right-hand side only the indices $j$ and $j+1$ were swapped and all other $\sigma(k)$ (with $\sigma(k)\neq j, j+1$) stayed the same.
\end{lemma}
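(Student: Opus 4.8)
The plan is to expand the alternating sum on the left-hand side of \eqref{EQU:Cech-diff-for-one-transpos} into monomials in the forms $\theta_1,\dots,\theta_{\ell+1}$ and to exhibit a telescoping cancellation, so that all but the two desired products survive. Throughout, abbreviate $P(\psi_1,\dots,\psi_\ell):=b_{\sigma_j}(\psi_1,\dots,\psi_\ell)=\psi_{\sigma_j(1)}\cdots \psi_{\sigma_j(\ell+1)}$, and let $T_i$ denote the $\ell$-tuple appearing inside $b_{\sigma_j}$ in the $i$-th summand, so that $T_0=(\theta_2,\dots,\theta_{\ell+1})$, $T_i=(\theta_1,\dots,\theta_i+\theta_{i+1},\dots,\theta_{\ell+1})$ for $1\le i\le \ell$, $T_{\ell+1}=(\theta_1,\dots,\theta_\ell)$, and the left-hand side equals $\sum_{i=0}^{\ell+1}(-1)^i P(T_i)$. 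The crucial feature of $\sigma_j$ is that, since $\sigma$ is a bijection and $\sigma_j$ collapses the two values $j$ and $j+1$ onto $j$, the value $j$ is attained at exactly the two positions $a<b$ with $\sigma(a),\sigma(b)\in\{j,j+1\}$, while every other value in $\{1,\dots,\ell\}$ is attained once. Hence $P$ is linear in each argument $\psi_m$ with $m\ne j$ (that slot fills a single factor), whereas it is quadratic in $\psi_j$, which fills the two factors at positions $a$ and $b$.

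First I would introduce, for $0\le i\le \ell+1$, the $\ell$-tuple $\Phi_i$ obtained from $(\theta_1,\dots,\theta_{\ell+1})$ by deleting $\theta_i$; explicitly the $m$-th slot of $\Phi_i$ is $\theta_m$ for $m<i$ and $\theta_{m+1}$ for $m\ge i$. A direct comparison of slots then shows $T_0=\Phi_0=\Phi_1$ and $T_{\ell+1}=\Phi_{\ell+1}$, while for $1\le i\le \ell$ the tuples $T_i$, $\Phi_i$, and $\Phi_{i+1}$ agree in every slot except slot $i$, where they carry $\theta_i+\theta_{i+1}$, $\theta_{i+1}$, and $\theta_i$ respectively. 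For $i\ne j$ slot $i$ fills a single factor of $P$, so linearity gives $P(T_i)=P(\Phi_i)+P(\Phi_{i+1})$. For $i=j$ the argument $\theta_j+\theta_{j+1}$ fills both factors at positions $a$ and $b$, and expanding these two factors produces four monomials: the two diagonal choices reproduce $P(\Phi_{j+1})$ and $P(\Phi_j)$, while the two cross choices give the products in which $\theta_j$ and $\theta_{j+1}$ occupy positions $a,b$ in either order and every remaining position $k$ carries the single factor coming from slot $\sigma_j(k)\ne j$.

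Finally I would assemble the signed sum $\sum_{i=0}^{\ell+1}(-1)^i P(T_i)$. Using $\Phi_0=\Phi_1$, the terms $P(\Phi_m)$ telescope: each occurs once with sign $(-1)^m$ (from the $i=m$ contribution) and once with sign $(-1)^{m-1}$ (from the $i=m-1$ contribution), together with the boundary terms $i=0$ and $i=\ell+1$ supplying $P(\Phi_1)$ and $P(\Phi_{\ell+1})$; all of these cancel in consecutive pairs. What remains is exactly $(-1)^j$ times the two cross terms from the $i=j$ expansion. Identifying the factor at each position $k\ne a,b$ with $\theta_{\sigma(k)}$ — which follows from $\sigma_j(k)<j\Rightarrow\sigma(k)=\sigma_j(k)$ and $\sigma_j(k)>j\Rightarrow\sigma(k)=\sigma_j(k)+1$ — shows these two cross terms are precisely the two products on the right-hand side of \eqref{EQU:Cech-diff-for-one-transpos}. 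The one step demanding care is the $i=j$ term: the non-linearity of $b_{\sigma_j}$ in its doubled $j$-th slot is exactly what generates the two surviving transposition terms, so the bookkeeping of which of $\theta_j,\theta_{j+1}$ lands at position $a$ versus $b$ must be matched carefully against the ordering in the stated right-hand side.
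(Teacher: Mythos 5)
Your proof is correct and uses exactly the mechanism of the paper's proof: a telescoping cancellation of consecutive terms in the alternating sum, with the doubled slot $j$ of $b_{\sigma_j}$ producing the two surviving cross terms. The only difference is that the paper carries this out on the concrete example $\sigma=(4,3,1,5,2)$, $j=2$ and asserts that the general case works the same way, whereas you supply the general bookkeeping (the tuples $\Phi_i$, the linearity/quadraticity analysis, and the identification $\theta_{\sigma_j(k)}$ or $\theta_{\sigma_j(k)+1}=\theta_{\sigma(k)}$ at positions $k\neq a,b$) that the paper omits.
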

\begin{proof}
We will demonstrate the proof in the particular example stated in the lemma, i.e., for $\sigma=(4,3,1,5,2)$ and $j=2$ giving $\sigma_2=(3,2,1,4,2)$. The general proof works the same way, but the notation required for the general case obscures the simple idea of its proof.

For the given example of $\sigma$ and $j$, the left-hand side of \eqref{EQU:Cech-diff-for-one-transpos} is
\begin{multline*}
 \theta_4\theta_3\theta_2\theta_5\theta_3
-\Big(\theta_4\theta_3(\theta_1+\theta_2)\theta_5\theta_3\Big)
+\Big(\theta_4(\theta_2+\theta_3)\theta_1\theta_5(\theta_2+\theta_3)\Big) \\
-\Big((\theta_3+\theta_4)\theta_2\theta_1\theta_5\theta_2\Big) 
+\Big(\theta_3\theta_2\theta_1(\theta_4+\theta_5)\theta_2\Big) 
-\theta_3\theta_2\theta_1\theta_4\theta_2 
\,\,\,   =\,\,\, \theta_4\theta_3\theta_1\theta_5\theta_2
+\theta_4\theta_2\theta_1\theta_5\theta_3,
\end{multline*}
where consecutive terms always cancel except for the two stated on the right-hand side.

Note, that a similar computation works for a general $\sigma$ and $j$ as well.
\end{proof}

We can use the $b$-function from the previous lemma to build a simplicial presheaf as follows.

\begin{definition}
Fix a set map $\nu:\{1,\dots, \ell+1\}\to \{1, \dots, \ell\}$, we denote by 
\[
\bbb_{\nu}(\phi_1,\dots,\phi_\ell):=u^{\ell+1}\cdot \tr\Big(\phi_{\nu(1)}\cdot \ldots\cdot \phi_{\nu(\ell+1)}\Big).
\]

Using this, we define a map of simplicial presheaves ${\bf{b}_{\nu}}: \VB \to \OMdR$ as follows. Given a smooth manifold $M$, and any $\vec{\mc E}= (\mc E_0\stackrel {f_1}\longrightarrow \dots \stackrel {f_n} \longrightarrow \mc E_n)\in \VB(M)_n$ in the nerve of vector bundles with connections, we need to associate a chain map ${\bf{b}_{\nu}}(M)_n(\vec{\mc E}):N(\Z\Delt{n})\to\OdR^\bu(M)\ul$ (similar to the construction for $\BCh$ in definition \ref{DEF:basic-CS-map}). For any $k$-subcell $|e_{i_0,\dots, i_k}|\subseteq |\Delta^n|$, denote by $\tilde\theta_1,\dots,\tilde\theta_k$ the Maurer Cartan forms associated to $(\mc E_{i_0}\to \mc E_{i_1}\to \dots \to \mc E_{i_k})$ as was done in lemma \ref{LEM:R-on-ell-subsimplex}; i.e. $\tilde\theta_j=\theta_{i_{j-1}+1}+\dots+\theta_{i_j}$. The map ${\bf{b}_{\nu}}(M)_n(\vec{\mc E})$ will be non-zero only on $\ell$ and $(\ell+1)$-simplicies:

$\bu$ Then, for any $\ell$-subcell $|e_{i_0,\dots, i_{\ell}}|\subseteq |\Delta^n|$, we define ${\bf{b}_{\nu}}(M)_n(\vec{\mc E})(e_{i_0,\dots, i_\ell})$ to be the expression obtained from $\bbb_{\nu}(\tilde\theta_1,\dots,\tilde\theta_\ell)$ under the de Rham differential in $\OdR^\bu(M)\ul$:
\begin{equation}\label{EQU:d-on-b-ell}
{\bf{b}}_\nu(M)_n(\vec{\mc E})(e_{i_0,\dots, i_\ell}):=d\Big(\bbb_{\nu}(\tilde\theta_1,\dots,\tilde\theta_\ell)\Big)
=u^{\ell+1}\cdot d\Big(\tr(\tilde\theta_{\nu(1)}\dots\tilde\theta_{\nu(\ell+1)})\Big)
\end{equation}

$\bu$ For any $(\ell+1)$-subcell $|e_{i_0,\dots, i_{\ell+1}}|\subseteq |\Delta^n|$, we define ${\bf{b}_{\nu}}(M)_n(\vec{\mc E})(e_{i_0,\dots, i_{\ell+1}})$ to be the expression obtained from $\bbb_{\nu}(\tilde\theta_1,\dots,\tilde\theta_\ell)$ by taking the differential in $N(\Z\Delt{n})$:
\begin{multline}\label{EQU:delta-on-b-ell}
{\bf{b}_{\nu}}(M)_n(\vec{\mc E})(e_{i_0,\dots, i_{\ell+1}})
\\
:=\bbb_\nu(\tilde\theta_{2},\dots, \tilde\theta_{\ell+1})
+\sum_{i=1}^{\ell} (-1)^i\cdot \bbb_\nu(\tilde\theta_{1},\dots, \tilde\theta_{i}+\tilde\theta_{i+1},\dots, \tilde\theta_{\ell+1}) 
+(-1)^{\ell+1}\cdot \bbb_\nu(\tilde\theta_{1},\dots, \tilde\theta_{\ell})
\end{multline}

$\bu$ All other ${\bf{b}_{\nu}}(M)_n(\vec{\mc E})(e_{i_0,\dots, i_k})$ with $k\neq \ell, \ell+1$ are defined to be zero. 
\[
\scalebox{0.7}{
\begin{tikzpicture}
\begin{axis}[axis lines = center, grid=both, xmin=-2, xmax=6, ymin=-2, ymax=6, xtick={-2,...,6},  ytick={-2,...,6}, xticklabels={}, yticklabels={}];
\draw [fill, black] (4,4) circle [radius=.1]; 
\draw [fill, black] (3,5) circle [radius=.1]; 
\draw [dashed, thick] (0,0)--(6,6);
\draw [thick] (3,.2)--(3,-.2); \draw [thick] (4,.2)--(4,-.2);
\draw [thick] (.2,3)--(-.2,3); \draw [thick] (.2,4)--(-.2,4);
\node at (3,-.5) {$\ell$}; \node at (4,-.5) {$\ell+1$};
\node at (-.5,3) {$\ell$}; \node at (-.8,4) {$\ell+1$};
\draw [thick, ->] (3,4)--(4,4); \node at (3.4,3.75) {$\delta$}; 
\draw [thick, ->] (3,4)--(3,5); \node at (2.6,4.5) {$d$};  \node at (-1,5.5) {de Rham};
\node at (3,5.5) {\eqref{EQU:d-on-b-ell}};
\node at (4.7,3.9) {\eqref{EQU:delta-on-b-ell}};
\draw [fill, black] (3,4) circle [radius=.12]; \draw [fill, white] (3,4) circle [radius=.08]; 
\end{axis}
\end{tikzpicture}}
\]
 
{\bf Claim:} ${\bf b_{\nu}}$ is a map of simplicial presheaves.
\begin{proof}[Proof of the Claim.]
We need to check three properties, first that ${\bf{b}}(M)_n(\vec{\mc E})$ is a chain map, second that ${\bf{b}_{\nu}}(M)$ is a map of simplicial sets, and third that $\bf{b}_{\nu}$ is a natural transformation.

First, ${\bf{b}_{\nu}}(M)_n(\vec{\mc E}):N(\Z\Delt{n})\to\OdR^\bu(M)\ul$ is a chain map, since ${\bf{b}_{\nu}}(M)_n(\vec{\mc E})(d(e_{i_0,\dots,i_{\ell+1}}))=\sum_{j=0}^{\ell+1} (-1)^j \cdot {\bf{b}_{\nu}}(M)_n(\vec{\mc E})(e_{i_0,\dots,\widehat{i_j},\dots,i_{\ell+1}})$, which, using \eqref{EQU:d-on-b-ell}, can be evaluated to be:
\begin{itemize}
\item
for $j=0$ on the right in the above sum, we get $d(\bbb_\nu(\tilde\theta_{2},\dots, \tilde\theta_{\ell+1}))$
\item
for $j=1,\dots,\ell$, we get $(-1)^j\cdot d(\bbb_\nu(\tilde\theta_{1},\dots, \tilde\theta_{i}+\tilde\theta_{i+1},\dots,\tilde\theta_{\ell+1}))$, since the Maurer Cartan form for $\mc E_{i_{j-1}}\to \mc E_{i_{j+1}}$ is the sum of the forms $\tilde\theta_{i}+\tilde\theta_{i+1}$ for $\mc E_{i_{j-1}}\to \mc E_{i_{j}}$ and $\mc E_{i_{j}}\to \mc E_{i_{j+1}}$ 
\item
for $j=\ell+1$, we get $(-1)^{\ell+1}\cdot d(\bbb_\nu(\tilde\theta_{1},\dots, \tilde\theta_{\ell}))$
\end{itemize}
From \eqref{EQU:delta-on-b-ell}, we see that this is $d({\bf{b}_{\nu}}(M)_n(\vec{\mc E})(e_{i_0,\dots, i_{\ell+1}}))$. 

Next, ${\bf{b}_{\nu}}(M)$ is a map of simplicial sets, i.e., for a morphism $\rho:[n]\to [m]$, the following diagram commutes (using the notation from the proof of proposition \ref{PROP:Ch-presheaf-map}):
\begin{equation*}
\xymatrix{ \VB(M)_m \ar^{{\bf{b}_{\nu}}(M)_m\hspace{.7in}}[rr] \ar_{\VB(M)_\rho}[d] && {\Chain^-}(N(\Z\Delt{m}),\OdR^\bu(M)\ul) \ar^{\kappa\mapsto \kappa\circ \rho_*}[d]   \\ 
\VB(M)_n \ar^{{\bf{b}_{\nu}}(M)_n\hspace{.7in}}[rr]&& {\Chain^-}(N(\Z\Delt{n}),\OdR^\bu(M)\ul) }
\end{equation*}
Now, $({\bf{b}_{\nu}}(M)_{n}\circ \VB(M)_{\rho}(\vec{\mc E}))(e_{i_0,\dots,i_\ell})=({\bf{b}_{\nu}}(M)_{n}((id\times \rho)^*\vec{\mc E}))(e_{i_0,\dots,i_\ell})$, and we check the cases of face maps and degeneracies.
\begin{itemize}
\item
For a face map $\delta_j:[n]\to [n+1]$ with $n\geq \ell$, we get that $(id\times \delta_j)^*(\mc E_0\to\dots\to \mc E_{n+1})=(\mc E_0\to \dots\to\mc E_{j-1}\stackrel{f_{j+1}\circ f_j}\longrightarrow\mc E_{j+1}\to\dots\to \mc E_{n+1})$, unless $j=0$ or $j=n+1$ in which case the first or last arrow gets removed, respectively. Thus, for $0\leq i_0<\dots<i_\ell\leq n$, the induced sequences of bundles when pulled back to $(\mc E_{i_0}\to\dots\mc \to \mc E_{i_\ell})$ (see \eqref{EQU:E-vec-tilde}) is equal to the pull back of $(\mc E_0\to\dots\to \mc E_{n+1})$ to the corresponding $e_{i'_0,\dots,i'_\ell}=(\delta_j)_*(e_{i_0,\dots,i_\ell})\in N(\Z\Delt{n+1})$ for $j>0$, and they are isomorphic via the extra pull back under $f_1$ in the case $j=0$ (---compare the proof of lemma \ref{LEM:VB-to-BVB}). In either case, taking a trace over Maurer Cartan forms in ${\bbb}_\nu$ gives same de Rham form, $({\bf{b}_{\nu}}(M)_{n}((id\times \delta_j)^*\vec{\mc E}))(e_{i_0,\dots,i_\ell})=({\bf{b}_{\nu}}(M)_{n+1}(\vec{\mc E}))\circ (\delta_j)_*( e_{i_0,\dots,i_\ell})$.
\item
For a degeneracy $\sigma_j:[\ell]\to [\ell-1]$, $(\sigma_j)_*(e_{i_0,\dots,i_\ell})=0$ in the normalized complex $N(\Z\Delt{\ell-1})$ (cf. proposition \ref{PROP:Ch-presheaf-map}). On the other hand $\sigma_j^*(\vec{\mc E})$ has an identity in one of its maps (see lemma \ref{LEM:VB-to-BVB}), which by equation \eqref{EQU:theta_j-from-f_j} will give a Maurer Cartan form of $0$, and so $({\bf{b}_{\nu}}(M)_{\ell}((id\times \sigma_j)^*\vec{\mc E}))(e_{i_0,\dots,i_\ell})=0=({\bf{b}_{\nu}}(M)_{\ell-1}(\vec{\mc E}))\circ (\sigma_j)_*( e_{i_0,\dots,i_\ell})$.
\end{itemize}
The analysis then follows just as in the proof of proposition \ref{PROP:Ch-presheaf-map}, depending on whether $\rho_*(e_{i_0,\dots,i_\ell})$ is degenerate or not.

Finally, we show that for a map of smooth manifolds $g:M'\to M$, the diagram
\begin{equation*}
\xymatrix{ \VB(M) \ar^{\VB(g)}[rr] \ar_{{\bf{b}_{\nu}}(M)}[d] && \VB(M') \ar^{{\bf{b}_{\nu}}(M')}[d]   \\ 
\OMdR(M) \ar^{\OMdR(g)}[rr]&& \OMdR(M') }
\end{equation*}
commutes. Now, for $\vec{\mc E}=(\mc E_0\to\dots\to\mc E_n)\in \VB(M)_n$ over $M$, $ \VB(g)_n (\vec{\mc E})=g^*(\vec{\mc E})=(g^*(\mc E_0)\to \dots\to g^*(\mc E_n))$, so that the induced Maurer Cartan forms \eqref{EQU:theta_j-from-f_j} are the pullbacks of the Maurer Cartan forms from $\vec{\mc E}$. Since the formulas for ${\bf{b}_{\nu}}(M)_n(\vec{\mc E})(e_{i_0,\dots, i_{k}})$ (see \eqref{EQU:d-on-b-ell} and \eqref{EQU:delta-on-b-ell}) are given in terms of these forms, we see that $({\bf{b}_{\nu}}(M')_n\circ \VB(g)_n (\vec{\mc E}))(e_{i_0,\dots, i_k})=g^*({\bf{b}_{\nu}}(M)_n(\vec{\mc E})(e_{i_0,\dots, i_k}))$. This, by definition, is equal to $(\OMdR(g)_n\circ {\bf{b}_{\nu}}(M)_n(\vec{\mc E}))(e_{i_0,\dots, i_k})$, which shows that ${\bf{b}_{\nu}}$ is a natural transformation.
\end{proof}
\end{definition}

Combining the last definition and lemma, we now define a variation of the Chern character map, that gives a non-symmetrized terms on the diagonal.
\begin{definition}\label{DEF:ChL}
The terms of the map $\Ch: \VB \to \OMdR$ in lowest component are sums of over all permutations of products of Maurer Cartan forms; see \eqref{EQU:CH-U-0...l-lowest-ul}. Let $\sigma\in S_{\ell+1}$. Then, there is a sequence of permutations $\sigma^{(0)}=\sigma,\dots,\sigma^{(p)}=(\ell+1,\ell,\dots, 2, 1)$ with $p=p(\sigma)$, where two consecutive permutations differ from each other by a single transposition of outputs, for example, $\sigma^{(k-1)}=(4,3,1,5,2)$ and $\sigma^{(k)}=(4,2,1,5,3)$ having $2$ and $3$ transposed. The position of the transposition is denoted by $j=j(\sigma,k)$; in the example in the previous sentence $j=2$.

We now fix such a sequence of $\sigma^{(k)}$s and transpositions $j(\sigma,k)$ for each permutation $\sigma$; to be concrete, we can start from $\sigma^{(0)}=\sigma$ by first moving $1$ to the last position, then $2$ to the second to last position, and follow in this fashion until we get $\sigma^{(p)}=(\ell+1,\ell,\dots, 2, 1)$. From lemma \ref{LEM:sigma-permut-for-b}, we know that $\sigma^{(k-1)}_{j(\sigma,k)}:\{1,\dots,\ell+1\}\to \{1,\dots,\ell\}$ can be used to interpolate beteeen $\sigma^{(k-1)}$ and $\sigma^{(k)}$, as stated in \eqref{EQU:Cech-diff-for-one-transpos}. We therefore define $\ChL: \VB \to \OMdR$ to be
\begin{equation}\label{EQU:def-of-ChL}
\ChL:=\Ch+{\bf{b}},\text{ with }{\bf{b}}:=\sum_{\ell\geq 1} \sum_{\sigma\in S_{\ell+1}} \sum_{k=1}^{p(\sigma)} \frac{\sgn(\sigma^{(k-1)})\cdot (-1)^{j(\sigma,k)+1}}{(\ell+1)!(2\pi i)^{\ell+1}}\cdot {{\bf{b}}_{\sigma^{(k-1)}_{j(\sigma,k)}}}
\end{equation}
We note that the additional ${\bf b}_\nu$s are non-vanishing only on the diagonal and off-diagonal degrees for $\ell\geq 1$:
\[
\scalebox{0.7}{
\begin{tikzpicture}
\begin{axis}[axis lines = center, grid=both, xmin=-2, xmax=6, ymin=-2, ymax=6, xtick={-2,...,6},  ytick={-2,...,6}, xticklabels={}, yticklabels={}];
\draw [fill, black] (1,3) circle [radius=.1]; 
\draw [fill, black] (2,2) circle [radius=.1]; 
\draw [fill, black] (2,4) circle [radius=.1]; 
\draw [fill, black] (3,3) circle [radius=.1]; 
\draw [fill, black] (3,5) circle [radius=.1]; 
\draw [fill, black] (4,4) circle [radius=.1]; 
\draw [fill, black] (4,6) circle [radius=.1]; 
\draw [fill, black] (5,5) circle [radius=.1]; 
\draw [fill, black] (6,6) circle [radius=.1]; 
\draw [dashed, thick] (0,0)--(6,6);
\node at (5.7,-0.5) {$\ell$};  \node at (-1,5.5) {de Rham};
\end{axis}
\end{tikzpicture}}
\]
\end{definition}

With this, we are finally ready to prove proposition \ref{PROP:Un-symmetrizing-Chern}.
\begin{proof}[Proof of proposition \ref{PROP:Un-symmetrizing-Chern}]
Note by the above definition \ref{DEF:ChL} $\Ch$ and $\ChL$ differs by multiples of the ${{\bf{b}}_{\sigma^{(k-1)}_{j(\sigma,k)}}}$s, which are concentrated in $u^\ell$ and $u^{\ell+1}$ powers. We only care about the ``diagonal'' terms in the $\ell$-de Rham diagram above. Note, that there is no symmetrization for $\ell=0$ and $\ell=1$.

Now, for $\ell\geq 1$, the ``diagonal'' terms coming from the ${\bf b}_\nu$s are given by \eqref{EQU:delta-on-b-ell} . We can use \eqref{EQU:Cech-diff-for-one-transpos} to simplify the right-hand side of \eqref{EQU:delta-on-b-ell} for $\nu=\sigma^{(k-1)}_{j}$ as follows:
\begin{multline*}
{\bf{b}}_{\sigma^{(k-1)}_{j}}(M)_n(\vec{\mc E})(e_{i_0,\dots, i_{\ell+1}})
=
u^{\ell+1}\cdot (-1)^j\cdot \Big(
\tr(\theta_{\sigma^{(k-1)}(1)}\dots \theta_{j}\dots \theta_{j+1}\dots \theta_{\sigma^{(k-1)}(\ell+1)})
\\
+
\tr(\theta_{\sigma^{(k-1)}(1)}\dots \theta_{j+1}\dots \theta_{j}\dots \theta_{\sigma^{(k-1)}(\ell+1)})
\Big),
\end{multline*}
where the second trace is $\tr(\theta_{\sigma^{(k)}(1)}\dots \theta_{\sigma^{(k)}(\ell+1)})$. Thus, since $\sgn(\sigma^{(k-1)})=-\sgn(\sigma^{(k)})$, any term $u^{\ell+1}\cdot \frac{\sgn(\sigma^{(k-1)})}{(\ell+1)!(2\pi i)^{\ell+1}}\cdot  \tr\big(\tilde{\theta}_{\sigma^{(k-1)}(1)}\dots \tilde{\theta}_{\sigma^{(k-1)}(\ell+1)}\big)$ gets replaced by a term $u^{\ell+1}\cdot \frac{\sgn(\sigma^{(k)})}{(\ell+1)!(2\pi i)^{\ell+1}}\cdot \tr\big(\tilde{\theta}_{\sigma^{(k)}(1)}\dots \tilde{\theta}_{\sigma^{(k)}(\ell+1)}\big)$. As $k=1, \dots, p(\sigma)$, this replaces each $\sigma^{(0)}=\sigma$ with $\sigma^{(p)}=(\ell+1,\ell,\dots, 2,1)$. Thus, the sum in \eqref{EQU:CH-U-0...l-lowest-ul} over all $\sigma\in S_{\ell+1}$ gets replaced with $(\ell+1)!$ copies of the term for the permutation $\sigma^{(p)}=(\ell+1,\dots,1)$.
\end{proof}

\appendix

\end{document}